\documentclass[letterpaper,10pt,reqno,onefignum,onetabnum]{amsart}
\usepackage[english]{babel}
\usepackage{amsmath}
\usepackage{amsthm}
\usepackage{verbatim}
\usepackage{mathrsfs}
\usepackage{bm} 
\usepackage[dvipsnames]{xcolor}
\usepackage{hyperref}
\hypersetup{
     colorlinks = true,
     linkcolor = OliveGreen,
     anchorcolor = OliveGreen,
     citecolor = OliveGreen,
     filecolor = OliveGreen,
     urlcolor = OliveGreen
     }
\usepackage{algorithm}
\usepackage{algorithmic}
\usepackage{float}
\usepackage{lipsum}
\usepackage{amsfonts}
\usepackage{amssymb}
\usepackage{graphicx}
\usepackage{epstopdf}
\usepackage{cases}
\usepackage{multirow}
\ifpdf
  \DeclareGraphicsExtensions{.eps,.pdf,.png,.jpg}
\else
  \DeclareGraphicsExtensions{.eps}
\fi
\usepackage{verbatim}
\usepackage{mathrsfs}
\usepackage{bm}

\newtheorem{teo}{Theorem}[section]
\newtheorem{prop}[teo]{Proposition}

\newtheorem{cor}[teo]{Corollary}
\newtheorem{pro}[teo]{Problem}
\newtheorem{algo}[teo]{Algorithm}

\newtheorem{rem}[teo]{Remark}
\newtheorem{ejem}[teo]{Example}

\usepackage{lipsum}
\usepackage{amsfonts}
\usepackage{amssymb}
\usepackage{graphicx}
\usepackage{epstopdf}
            
\usepackage{cases}
\usepackage{multirow}
\usepackage{algorithmic}
\usepackage{tikz}
\usetikzlibrary{matrix}
\usetikzlibrary{arrows}
\ifpdf
  \DeclareGraphicsExtensions{.eps,.pdf,.png,.jpg}
\else
  \DeclareGraphicsExtensions{.eps}
\fi
\usepackage{verbatim}
\usepackage{mathrsfs}
\usepackage{bm} 
\usepackage{color}
\usepackage{caption}
\usepackage{subcaption}


\newcommand{\N}{\mathbb N}

\newcommand{\R}{\mathbb R}
\newcommand{\J}{\mathcal H}
\renewcommand{\H}{\mathcal{H}}
\newcommand{\G}{\mathcal G}
\newcommand{\K}{\mathcal K}

\newcommand{\HH}{{\bm{\mathcal{H}}}}
\newcommand{\vv}{{\bm V}}

\newcommand{\TT}{{\bm T}}

\newcommand{\id}{\textnormal{Id}}

\newcommand{\s}{\sigma}
\newcommand{\T}{\tau}
\newcommand{\weak}{\rightharpoonup}
\newcommand{\ran}{\textnormal{ran}\,}
\newcommand{\dom}{\textnormal{dom}\,}

\newcommand{\zer}{\textnormal{zer}}
\newcommand{\fix}{\textnormal{Fix}\,}
\newcommand{\gra}{\textnormal{gra}\,}

\newcommand{\argm}[1]{\underset{#1}{\argmin\, }}

\newcommand{\scal}[2]{{\left\langle{{#1}\mid{#2}}\right\rangle}}

\newcommand{\menge}[2]{\big\{{#1}~\big |~{#2}\big\}} 
\newcommand{\pinf}{\ensuremath{{+\infty}}}

\newcommand{\RPP}{\ensuremath{\left]0,+\infty\right[}}

\newcommand{\RX}{\ensuremath{\left]-\infty,+\infty\right]}}

\newcommand{\sri}{\ensuremath{\text{\rm sri}\,}}
\newcommand{\prox}{\ensuremath{\text{\rm prox}}}
\newcommand{\weakly}{\ensuremath{\:\rightharpoonup\:}}
\usepackage{geometry}
\geometry{lmargin=1.4in,rmargin=1.4in,bmargin=1.4in,tmargin=1.4in}
\numberwithin{equation}{section}

\DeclareFontEncoding{FMS}{}{}
\DeclareFontSubstitution{FMS}{futm}{m}{n}
\DeclareFontEncoding{FMX}{}{}
\DeclareFontSubstitution{FMX}{futm}{m}{n}
\DeclareSymbolFont{fouriersymbols}{FMS}{futm}{m}{n}
\DeclareSymbolFont{fourierlargesymbols}{FMX}{futm}{m}{n}
\DeclareMathDelimiter{\nr}{\mathord}{fouriersymbols}{152}{fourierlargesymbols}{147}

\DeclareMathOperator*{\argmin}{arg\,min}
\DeclareMathDelimiter{\nr}{\mathord}{fouriersymbols}{152}{fourierlargesymbols}{147}

\newcommand{\lb}[1]{{\color{black}{#1}}}

\title[{Split-Douglas-Rachford and Split-ADMM}]{Split-Douglas-Rachford \lb{algorithm} for composite 
  monotone 
  inclusions and Split-ADMM}

\author{Luis M. Brice\~no-Arias \& Fernando Rold\'an}
\address{Departamento de Matem\'{a}tica, Universidad T\'{e}cnica Federico Santa Mar\'{i}a, Avenida Espa\~{n}a 1680, Valpara\'{i}so, Chile}
\email{luis.briceno@usm.cl, fernando.roldan@usm.cl}


\subjclass[2010]{47H05, 47H10, 65K05, 65K15, 90C25, 49M29.}


\begin{document}

\begin{abstract}In this paper we provide a generalization of the
Douglas-Rachford splitting (DRS) \lb{and the primal-dual algorithm 
\cite{condat,vu}} for solving monotone inclusions
in a real Hilbert space involving a general linear operator. The 
proposed method \lb{allows for primal and dual non-standard metrics 
and }activates the linear operator separately from 
the monotone operators appearing in the inclusion\lb{. I}n the 
simplest case when the linear operator \lb{has full range}, it reduces 
to classical DRS. Moreover, the weak convergence of primal-dual 
sequences to \lb{a Kuhn-Tucker point} 
is guaranteed, generalizing the main result in \cite{svaiter}. 
Inspired \lb{by} \cite{gabay83}, we \lb{also}
derive a new Split-ADMM (SADMM) by applying our method to the 
dual of a  convex optimization problem involving a linear operator  
which can be expressed as the composition of two linear
 operators. The proposed SADMM activates one linear operator 
 implicitly and the other \lb{one} explicitly\lb{,} and we recover ADMM 
 when the 
 latter is set as the identity. Connections and comparisons of our 
 theoretical results with respect to the literature are provided for 
 the main algorithm and SADMM\lb{. T}he flexibility and efficiency 
 of \lb{both methods} is illustrated via a 
numerical simulations \lb{in total variation image restoration and} a 
sparse 
minimization problem.
\par
\bigskip

\noindent \textbf{Keywords.} {\it ADMM, convex optimization, Douglas--Rachford splitting, fixed 
point iterations, 
monotone operator  theory, quasinonexpansive operators, 
splitting algorithms.}
\end{abstract}

\maketitle

\section{Introduction}
In this paper we focus on a splitting algorithm for solving the 
following 
primal-dual monotone inclusion.
\begin{pro}\label{prob2}
	Let $\H$ and $\G$ be real Hilbert spaces, let 
	$A\colon \H\rightarrow 2^{\H}$ and 
	$B \colon \G \rightarrow 2^{\G}$  be  maximally monotone 
	operators, and let $L\colon \H \rightarrow \G$ be  a non-zero
	linear 
	bounded operator. The problem is to find $(\hat{x},\hat{u}) \in 
	\bm{Z}$, where
	\begin{equation}\label{Z}
		\bm{Z}=\menge{(\hat{x},\hat{u}) \in \H \times \G}{0 \in A\hat{x}+ L^*\hat{u},\:
			0 \in B^{-1}\hat{u}-L\hat{x}}
	\end{equation}
	is assumed to be non-empty.
\end{pro}
This problem arises naturally in several problems in partial 
differential equations coming from mechanical problems 
\cite{gabay83,GlowinskyMorrocco75,Goldstein64}, differential 
inclusions \cite{Aubin1990,Showalter}, game theory \cite{Nash}, 
among 
other disciplines. The set $\bm{Z}$ is 
the collection of Kuhn-Tucker points \cite[Problem~26.30]{1},
which is also known as \textit{extended 
	solution set} (see, e.g., \cite{Vu15} and \cite{EckSvaiter,svaiter} for 
the case when $L=\id$). 

It follows from \cite[Proposition~2.8]{skew} that 
any solution $(\hat{x},\hat{u})$ to Problem~\ref{prob2} satisfies 
that $\hat{x}$ is a solution to the primal inclusion
\begin{equation}
	\label{e:priminc}
	\text{find}\quad x\in\H\quad\text{such that}\quad 0\in Ax+L^*BLx
\end{equation}
and $\hat{u}$ is solution to the dual inclusion
\begin{equation}
	\label{e:dualinc}
	\text{find}\quad u\in\G\quad\text{such that}\quad 0\in 
	B^{-1}u-LA^{-1}(-L^*u).
\end{equation}
Conversely, if $\hat{x}$ is a solution to \eqref{e:priminc} then
there exists $\tilde{u}$ solution to \eqref{e:dualinc} such that
$(\hat{x},\tilde{u})\in\bm{Z}$ and the dual argument also holds.
In the particular case when $A=\partial f$ and 
$B=\partial g^*$, for proper convex lower semicontinuous 
functions $f\colon\H\to\RX$ and $g\colon\G\to\RX$, 
any solution $\hat{x}$ to \eqref{e:priminc} is a solution to the primal
convex optimization problem
\begin{equation}
	\label{e:primopti}
	\min_{x \in \H}\big(f(x)+g(Lx)\big),
\end{equation}
any solution $\hat{u}$ to 
\eqref{e:dualinc} is a  solution to the dual problem
\begin{equation}
	\label{e:dualopti}
	\min_{u \in \G}\big(g^*(u)+f^*(-L^*u)\big),
\end{equation}
and the converse holds under standard qualification conditions
(see, e.g., \cite{skew}).
Problems \eqref{e:primopti} and \eqref{e:dualopti} model several 
image processing 
problems as image restoration and denoising 
\cite{ChambLions97,Pustelnik2019,Daube04,Liu2014,Pang2017,Sha2020},
traffic theory 
\cite{Nets1,Fuku96,GafniBert84}, among others.

In the case when $L=\id$, Problem~\ref{prob2} is solved by the 
Douglas-Rachford 
splitting (DRS) \cite{lions-mercier79}, which is a classical 
algorithm inspired 
from a numerical method for solving linear systems appearing in 
discretizations of PDEs \cite{DR1956}. Given $z_0\in\H$ and 
$\tau>0$, DRS generates the sequence 
$(z_n)_{n\in\N}\subset \H$ 
via the recurrence
\begin{equation}
	\label{e:DRSintro}
	(\forall n\in\N)\quad z_{n+1}=J_{\T B } (2J_{\T A}z_n - z_n 
	)+z_n- J_{\T A}z_n,
\end{equation}
and $z_n\weakly \hat{z}$ for some $\hat{z}\in\H$ 
such that $J_{\tau A}\hat{z}$ is a zero of $A+B$ 
\cite[Theorem~1]{lions-mercier79}, where we 
denote the resolvent of  $M\colon\H\to 2^{\H}$ by 
$J_M=(\id+M)^{-1}$. Under additional assumptions, such as
weak lower semicontinuity of $J_{\tau A}$ or maximal 
monotonicity 
of $A+B$,
the weak convergence of 
the \textit{shadow sequence} $(J_{\tau 
	A}{z_n})_{n\in\N}$ to a zero of $A+B$ is guaranteed in 
\cite[Theorem~1]{lions-mercier79}. 
More than thirty years later, the weak convergence of the 
shadow sequence to a solution is proved in \cite{svaiter} without 
any further assumption.

In the general case when $L\ne\id$, a drawback of DRS is that 
the maximal monotonicity of $L^* B L$ is needed in order to 
ensure the
weak convergence of $(z_n)_{n\in\N}$ and the computation of 
its resolvent at each iteration usually leads to sub-iterations, at 
exception of very particular cases. 
Several algorithms in the literature including 
\cite{bot2018,BOT2,bot2013,skew,Siopt2,vu} 
split the influence 
of the linear operator $L$ from the monotone operators, avoiding 
sub-iterations. In particular, we highlight the primal-dual splitting 
(PDS) proposed in \cite{vu}, which generates a sequence in 
$\H\times\G$ via the recurrence
\begin{equation}
	\label{e:condatvu}
	(\forall n\in\N)\quad 
	\begin{array}{l}
		\left\lfloor
		\begin{array}{l}
			{x}_{n+1}= J_{\T A} ( {x}_n -\T L^* {v}_n)\\
			{v}_{n+1}=J_{\s B^{-1}}( {v}_n+\s L(2{x}_{n+1}-{x}_n)),
		\end{array}
		\right.
	\end{array}
\end{equation}
for some initial point $(x_0,v_0)\in\H\times\G$ and strictly positive 
step-sizes satisfying $\tau\sigma\|L\|^2<1$. 

In the context of convex optimization, it is well known that 
DRS applied to \eqref{e:dualopti} leads to the alternating direction 
method of multipliers (ADMM) 
\cite{gabay83,gabaymercier,GlowinskyMorrocco75}, whose first step 
needs 
sub-iterations in general. This drawback is overcome by the 
splitting methods proposed in 
\cite{bot2018,BOT2,bot2013,Siopt2,cp,yuan,Molinari2019}. 
In particular, the algorithm
proposed in \cite{cp} coincides with PDS in
\eqref{e:condatvu} in the optimization setting and its 
convergence is guaranteed if  $\tau\sigma\|L\|^2<1$. 
In \cite{condat}, the convergence of the sequences generated by 
\eqref{e:condatvu}
with step-sizes satisfying the limit condition $\tau\sigma\|L\|^2=1$ 
is studied in finite 
dimensions. This limit case is important because the algorithm 
improves its efficiency as the parameters approach the boundary 
(see Section~\ref{sec:TV}), it has the advantage of tuning only 
one parameter, 
and the algorithm reduces to DRS 
and ADMM when $L=\id$ and $\tau\sigma=1$ \cite[Section~4.2]{cp}. 
Furthermore, a preconditioned version 
of \eqref{e:condatvu} in the optimization context is proposed in 
\cite{pockcham}. In this extension, 
$\tau \id$ and $\s\id$ are 
generalized to strongly monotone self-adjoint linear operators 
$\varUpsilon$ and $\Sigma$, respectively, and the convergence is 
guaranteed  under the 
condition $\|\Sigma^{\frac{1}{2}}LT^{\frac{1}{2}}\|<1$. A preconditioned 
version of \eqref{e:condatvu} for monotone inclusions is 
derived in \cite{CombVu14}.

In this paper we propose and study the following splitting 
algorithm for solving Problem~\ref{prob2}, which is a 
generalization of DRS when $L\ne \id$ and of 
\cite{CombVu14,vu}.
\begin{algo}[Split-Douglas-Rachford (SDR)]\label{alg:SDR}
	In the context of Problem~\ref{prob2}, let $(x_0,u_0) \in \H\times\G$, 
	let $\Sigma\colon \G\to\G$ and $\varUpsilon\colon\H\to\H$ be 
	strongly 
	monotone self-adjoint linear operators such that 
	$U=\varUpsilon^{-1}-L^*\Sigma L$
	is monotone. Consider the recurrence:
	\begin{equation}\label{eq:algoSDR} 
		(\forall n \in \N)\quad
		\begin{array}{l}
			\left\lfloor
			\begin{array}{l}
				v_{n}=\Sigma(\id-J_{\Sigma^{-1}B})
				(Lx_n+\Sigma^{-1}u_n)\\
				z_{n}=x_{n}-\varUpsilon L^*v_{n} \\
				x_{n+1}=J_{\varUpsilon A} {z_n}\\
				u_{n+1}=  \Sigma L(x_{n+1}-x_{n})+v_{n}.
			\end{array}
			\right.
		\end{array}
	\end{equation}
\end{algo}
Note that
Algorithm~\ref{alg:SDR} splits the influence of the linear operator 
from the monotone operators and, by storing $(Lx_n)_{n\in\N}$,
only one activation of $L$ is needed at each iteration.
Moreover, 
in the case when $\ran L =\G$, 
we prove in Proposition~\ref{prop:GDRtoDR} that 
\eqref{eq:algoSDR} reduces to a preconditioned version of DRS in
\eqref{e:DRSintro}, in which case $J_{\varUpsilon L^*BL}$ has a 
closed formula depending on the resolvent of $B$. Other 
preconditioned versions of 
DRS are used for solving structured convex optimization problems in 
\cite{bot2013,BrediesSun15,BrediesSun15TV,Yang21}, but they do 
not reduce 
to DRS when $L=\id$.
Without any further assumptions than those in Problem~\ref{prob2}, 
we guarantee the weak convergence of the sequence 
$\big((x_n,u_n)\big)_{n\in\N}$
generated by Algorithm~\ref{alg:SDR} 
to a point in $\bm{Z}$, generalizing 
the result in \cite{svaiter} to the case when $L\ne\id$. 
In the particular case when 
$\|\Sigma^{\frac{1}{2}}LT^{\frac{1}{2}}\|<1$, we obtain 
a reduction of Algorithm~\ref{alg:SDR} to the preconditioned PDS in 
\cite{pockcham} and, when 
$\|\Sigma^{\frac{1}{2}}LT^{\frac{1}{2}}\|=1$, we generalize 
\cite[Theorem~3.3]{condat} to
monotone inclusions and infinite 
dimensions considering non-standard metrics. We also provide a 
numerical comparison of 
Algorithm~\ref{alg:SDR} with several methods available in the 
literature in a total variation image reconstruction problem.

Another contribution of this manuscript is a generalization of 
ADMM in the convex optimization context, by applying 
Algorithm~\ref{alg:SDR} to the dual problem of 
\eqref{e:primopti} when $L=KT$,
for some non-trivial linear operators $T$ and 
$K$. This splitting, called Split-ADMM (SADMM), 
allows us to solve \eqref{e:primopti} by activating $T$ implicitly 
and 
$K$ explicitly. SADMM reduces to 
the classical ADMM in the case when $K=\id$, 
$\Sigma=\sigma\id$, and $\varUpsilon=\tau\id$ and, in the case 
when $T=\id$, it is a fully explicit algorithm which splits
the influence of the linear operator in the first step of ADMM. We 
prove the weak convergence of SADMM, generalizing results in
\cite{ecksteinthesis,gabay83,gabaymercier}. We also prove the 
equivalence 
between SDR and SADMM, generalizing some results in 
\cite{russell2016,ecksteinthesis,gabay83,gabaymercier,moursi2019}
to the case when $L\ne\id$. 
In addition, we provide a version of SADMM able to deal with two 
linear operators as in \cite{BrediesSun17}. The resulting method is a 
non-standard metric version of several ADMM-type algorithms in 
\cite{bot2018,BrediesSun17,shefi,Osher11} and it can be 
seen
as an augmented Lagrangian method with a non-standard metric.
We also illustrate the efficiency of SADMM by 
comparing its numerical performance in an academical sparse 
minimization 
example in which the matrix $L$ be factorized as $L=KT$ 
from its singular value 
decomposition (SVD). We show 
that the computational time may be drastically reduced by using 
SADMM with a suitable factorization of $L$.

The paper is organized as follows. In Section~\ref{sec:notation} 
we set our notation. 
In Section~\ref{sec:primaldual} we provide the proof of 
convergence of SDR and we connect our results
with the literature. 
In Section~\ref{sec:SADMM} we derive the SADMM,
we provide several theoretical results, and we compare them with 
the literature in convex optimization. Finally,
in Section~\ref{sec:numer} we provide numerical simulations 
illustrating the efficiency of SDR and SADMM.

\section{Notations and Preliminaries}
\label{sec:notation}
Throughout this paper $\H$ and $\G$ are real Hilbert spaces with 
the scalar 
product $\scal{\cdot}{\cdot}$ and associated norm $\|\cdot 
\|$. The identity operator on $\H$ is denoted by $\id$. Given a 
linear bounded operator $L:\H \to \G$, we denote its adjoint 
by $L^*\colon\G\to\H$, its kernel by $\ker L$, and its range by 
$\ran L$.
The symbols $\weakly$ and $\to$ denote the weak and strong 
convergence, respectively.
Let $D\subset \H$ be non-empty and let $T: D \rightarrow \H$. 
The set of fixed points of $T$ is $\fix T = \menge{x \in D}{x=Tx}$.
Let $\beta \in \left]0,+\infty\right[$. The operator $T$ is  
$\beta-$strongly monotone if, for every $x$ and $y$ in $D$, we have
$\scal{x-y}{Tx-Ty}\ge \beta\|x-y\|^2$,
it is nonexpansive if, for every $x$ and $y$ in $D$, we have
$\|Tx - Ty \|\leq \|x-y\|$,
it is firmly nonexpansive  if 
\begin{equation}\label{def:quasifirmnonexpansive}
	(\forall x \in D) (\forall y \in D)\ \ \|Tx - Ty \|^2 \leq 
	\|x-y\|^2-\|(\id-T)x-(\id-T)y\|^2,
\end{equation}
and it is firmly quasinonexpansive if, for every $x\in D$ and
$y\in\fix T$, we have $\|Tx - y \|^2 \leq 
\|x-y\|^2-\|Tx-x\|^2$.
Let $A:\H \rightarrow 2^{\H}$ be a set-valued operator. 
The inverse of $A$ is 
$A^{-1} \colon u \mapsto \menge{x \in \H}{u \in Ax}$.
The domain, range, graph, and zeros of $A$ 
are 
$\dom\, A = \menge{x \in \H}{Ax \neq  \varnothing}$,
$\ran\, A = \menge{u \in 
	\H}{(\exists x \in \H)\,\, u \in Ax}$, 
$\gra A = \menge{(x,u) \in \H \times \H}{u \in Ax}$,
and $\zer A = 
\menge{x \in \H}{0 \in Ax}$, respectively. 
The operator $A$  is monotone if, for every $(x,u)$ and $(y,v)$
in $\gra A$, we have $\scal{x-y}{u-v} 
\geq 0$
and $A$ is maximally monotone if it is monotone and its graph is 
maximal in the sense of 
inclusions among the graphs of monotone operators.
The resolvent of a maximally monotone operator $A$ is 
$J_A=(\id+A)^{-1}$, which is firmly nonexpansive and satisfies 
$\fix J_A=\zer A$.

For every self-adjoint monotone linear operator $U\colon\H\to\H$, 
we define $\|\cdot\|_U=\sqrt{\scal{\cdot}{\cdot}_U}$, where 
$\scal{\cdot}{\cdot}_U \colon (x,y) \to 
\scal{x}{U y}$ is bilinear,  positive semi-definite, 
symme\-tric. For every $x$ and $y$ in $\H$, we have
\begin{equation}
	\label{e:scalU}
	\|x-y\|_U^2=\|x\|_U^2-2\scal{x}{y}_U+\|y\|^2_U.
\end{equation}

We denote by $\Gamma_0(\H)$ the class of proper lower 
semicontinuous convex functions $f\colon\H\to\RX$. Let 
$f\in\Gamma_0(\H)$.
The Fenchel conjugate of $f$ is 
defined by $f^*\colon u\mapsto \sup_{x\in\H}(\scal{x}{u}-f(x))$,
$f^*\in \Gamma_0(\H)$,
the subdifferential of $f$ is the maximally monotone operator
$\partial f\colon x\mapsto \menge{u\in\H}{(\forall y\in\H)\:\: 
	f(x)+\scal{y-x}{u}\le f(y)}$,
$(\partial f)^{-1}=\partial f^*$, 
and we have  that $\zer\,\partial f$ is the set of 
minimizers of $f$, which is denoted by $\arg\min_{x\in \H}f$. 
Given a strongly monotone self-adjoint linear operator 
$\varUpsilon\colon\H\to\H$, we denote by 
\begin{equation}
	\label{e:prox}
	\prox^{\varUpsilon}_{f}\colon 
	x\mapsto\argm{y\in\H}\big(f(y)+\frac{1}{2}\|x-y\|_{\varUpsilon}^2\big),
\end{equation}
and by $\prox_{f}=\prox^{\id}_{f}$. We have 
$\prox^{\varUpsilon}_f=J_{\varUpsilon^{-1}\partial f}$ 
\cite[Proposition~24.24(i)]{1}
and it is single valued since 
the objective function in 
\eqref{e:prox} is strongly convex. Moreover, it follows from 
\cite[Proposition~24.24]{1} that
\begin{equation}
	\label{e:Moreau_nonsme}
	\prox^{\varUpsilon}_{f}=
	\id-\varUpsilon^{-1}\,  
	\prox^{\varUpsilon^{-1}}_{f^*}\, 
	\varUpsilon=\varUpsilon^{-1}\,(\id-  
	\prox^{\varUpsilon^{-1}}_{f^*})\, 
	\varUpsilon.
\end{equation}
Given a non-empty closed convex set $C\subset\H$, we denote by 
$P_C$ the projection onto $C$, by
$\iota_C\in\Gamma_0(\H)$ the indicator function of $C$, which 
takes the value $0$ in $C$ and $\pinf$ otherwise, we denote
by $N_C=\partial \iota_C$ the normal cone to $C$, 
and by $\sri C$ its strong relative interior. 
For further properties of monotone operators,
nonexpansive mappings, and convex analysis, the 
reader is referred to \cite{1}. 

We finish this section with a result involving monotone linear 
operators, which is useful for the connection of our algorithm and 
\cite{pockcham}.
\begin{prop}
	\label{p:linearop}
	Let $\H$ and $\G$ be real Hilbert spaces, let 
	$\varUpsilon\colon\H\to\H$ 
	and 
	$\Sigma\colon\G\to\G$ be
	strongly monotone self-adjoint linear operators, and set
	\begin{equation}
		\vv\colon\H\oplus\G\to\H\oplus\G\colon (x,u)\mapsto 
		(\varUpsilon^{-1}x-L^*u,\Sigma^{-1}u-Lx).
	\end{equation} 
	Then, the following statements are equivalent.
	\begin{enumerate}
		\item\label{p:linearop1} 
		$\varUpsilon^{-1}-L^*\circ \Sigma\circ L$ is monotone.
		\item\label{p:linearop2} 
		$\|\Sigma^{\frac{1}{2}}\circ L\circ \varUpsilon^{\frac{1}{2}}\|\le 1$.
		\item\label{p:linearop3} 
		$\|\varUpsilon^{\frac{1}{2}}\circ L^*\circ \Sigma^{\frac{1}{2}}\|\le 1$.
		\item \label{p:linearop4}
		$\Sigma^{-1}-L\circ \varUpsilon\circ L^*$ is monotone.
		\item\label{p:linearop5}
		For every $(x,u)\in\H\times\G$,
		\begin{equation}
			\label{e:vcocomax}
			\scal{(x,u)}{\vv(x,u)}\ge\max\big\{\|\varUpsilon^{-1}u-L^*x\|_{\varUpsilon}^2,
			\|\Sigma^{-1}u-Lx\|_{\Sigma}^2\big\}.
		\end{equation}
	\end{enumerate}
	Moreover, if any of the statements above holds, $\vv$ is 
	$\frac{\tau\sigma}{\tau+\sigma}-$cocoercive, where $\tau>0$
	and $\sigma>0$ are the strong monotonicity constants of 
	$\varUpsilon$
	and $\Sigma$, respectively.
\end{prop}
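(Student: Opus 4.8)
The plan is to establish the four equivalences $\ref{p:linearop1}\Leftrightarrow\ref{p:linearop2}\Leftrightarrow\ref{p:linearop3}\Leftrightarrow\ref{p:linearop4}$, then to deduce $\ref{p:linearop5}$ from $\ref{p:linearop1}$ and $\ref{p:linearop4}$ and close the loop with $\ref{p:linearop5}\Rightarrow\ref{p:linearop1}$, and finally to read the cocoercivity estimate off \eqref{e:vcocomax}. Throughout I would use that, since $\varUpsilon$ and $\Sigma$ are strongly monotone and self-adjoint, they are positive definite, bounded, and boundedly invertible, so their positive square roots $\varUpsilon^{\frac{1}{2}}$ and $\Sigma^{\frac{1}{2}}$ are well defined, self-adjoint, and bijective; and that a self-adjoint bounded linear operator $U$ is monotone if and only if $\scal{w}{Uw}\ge 0$ for every $w$.

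For $\ref{p:linearop1}\Leftrightarrow\ref{p:linearop2}$: monotonicity of $\varUpsilon^{-1}-L^*\Sigma L$ says exactly that $\|x\|_{\varUpsilon^{-1}}^2\ge\|\Sigma^{\frac{1}{2}}Lx\|^2$ for all $x\in\H$; substituting $x=\varUpsilon^{\frac{1}{2}}y$ and using that $\varUpsilon^{\frac{1}{2}}$ is a bijection, this is equivalent to $\|y\|^2\ge\|\Sigma^{\frac{1}{2}}L\varUpsilon^{\frac{1}{2}}y\|^2$ for all $y\in\H$, i.e.\ to $\|\Sigma^{\frac{1}{2}}L\varUpsilon^{\frac{1}{2}}\|\le 1$. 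The step $\ref{p:linearop2}\Leftrightarrow\ref{p:linearop3}$ is the identity $\|M\|=\|M^*\|$ applied to $M=\Sigma^{\frac{1}{2}}L\varUpsilon^{\frac{1}{2}}$, whose adjoint is $\varUpsilon^{\frac{1}{2}}L^*\Sigma^{\frac{1}{2}}$ because the square roots are self-adjoint; and $\ref{p:linearop3}\Leftrightarrow\ref{p:linearop4}$ is the same argument as before with $(\H,\varUpsilon,L)$ and $(\G,\Sigma,L^*)$ interchanged.

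To obtain $\ref{p:linearop5}$, I would expand, for $(x,u)\in\H\times\G$,
\[
\scal{(x,u)}{\vv(x,u)}=\|x\|_{\varUpsilon^{-1}}^2+\|u\|_{\Sigma^{-1}}^2-2\scal{Lx}{u},
\]
and, using \eqref{e:scalU} together with the self-adjointness of $\varUpsilon$,
\[
\|\varUpsilon^{-1}x-L^*u\|_{\varUpsilon}^2=\|x\|_{\varUpsilon^{-1}}^2-2\scal{Lx}{u}+\scal{u}{L\varUpsilon L^*u}.
\]
Subtracting the second identity from the first leaves $\scal{u}{(\Sigma^{-1}-L\varUpsilon L^*)u}$, nonnegative by $\ref{p:linearop4}$; the symmetric computation for the other component of $\vv(x,u)$ gives $\scal{(x,u)}{\vv(x,u)}-\|\Sigma^{-1}u-Lx\|_{\Sigma}^2=\scal{x}{(\varUpsilon^{-1}-L^*\Sigma L)x}\ge 0$ by $\ref{p:linearop1}$. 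Together these two inequalities are \eqref{e:vcocomax}. Conversely, evaluating \eqref{e:vcocomax} at $u=0$ (resp.\ $x=0$) recovers $\ref{p:linearop1}$ (resp.\ $\ref{p:linearop4}$), which closes the cycle.

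For the final assertion, $\vv$ is linear, so it suffices to prove $\scal{(x,u)}{\vv(x,u)}\ge\frac{\tau\sigma}{\tau+\sigma}\|\vv(x,u)\|^2$ for every $(x,u)$, the general cocoercivity inequality following by replacing the argument with a difference. Since $\varUpsilon$ and $\Sigma$ are $\tau$- and $\sigma$-strongly monotone we have $\|w\|^2\le\tau^{-1}\|w\|_{\varUpsilon}^2$ on $\H$ and $\|w'\|^2\le\sigma^{-1}\|w'\|_{\Sigma}^2$ on $\G$; applying these to the two components of $\vv(x,u)$ and using \eqref{e:vcocomax} for each,
\[
\|\vv(x,u)\|^2=\|\varUpsilon^{-1}x-L^*u\|^2+\|\Sigma^{-1}u-Lx\|^2\le\Big(\tfrac{1}{\tau}+\tfrac{1}{\sigma}\Big)\scal{(x,u)}{\vv(x,u)},
\]
which rearranges to the claim. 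I do not expect a serious obstacle: the only points needing a little care are the bookkeeping in the two norm expansions for $\ref{p:linearop5}$ and the standard fact that $\varUpsilon^{\frac{1}{2}}$ is a bounded bijection, which is what lets one pass from ``for all $x$'' to ``for all $y$'' in $\ref{p:linearop1}\Leftrightarrow\ref{p:linearop2}$.
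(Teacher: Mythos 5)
Your proposal is correct and follows essentially the same route as the paper: the same square-root substitution for \ref{p:linearop1}$\Leftrightarrow$\ref{p:linearop2}, adjoint and symmetry for \ref{p:linearop3} and \ref{p:linearop4}, the same two quadratic identities for \ref{p:linearop5}, and an equivalent weighted combination for the cocoercivity constant (your direct sum with weights $\tau^{-1}$ and $\sigma^{-1}$ is the paper's optimized convex combination at $\lambda^*=\sigma/(\tau+\sigma)$ in disguise). The only cosmetic differences are that you recover \ref{p:linearop1} and \ref{p:linearop4} from \ref{p:linearop5} by setting $u=0$ and $x=0$, where the paper instead reuses its identity \eqref{e:mon1}; both are valid.
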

\begin{proof}
	\ref{p:linearop1}$\Leftrightarrow$\ref{p:linearop2}:
	Since $\Sigma$ and $\varUpsilon$ are strongly monotone, linear, and 
	self-adjoint,
	it follows from \cite[Theorem~p.~265]{RieszNagy} that there exists 
	strongly monotone,
	linear, self-adjoint operators $\Sigma^{\frac{1}{2}}$ and 
	$\varUpsilon^{\frac{1}{2}}$ such that $\Sigma=\Sigma^{\frac{1}{2}}\circ 
	\Sigma^{\frac{1}{2}}$ and $\varUpsilon=\varUpsilon^{\frac{1}{2}}\circ 
	\varUpsilon^{\frac{1}{2}}$.
	Moreover, $\varUpsilon$, $\Sigma$,  
	$\varUpsilon^{\frac{1}{2}}$, and $\Sigma^{\frac{1}{2}}$ are invertible.
	Hence, we have
	\begin{align}
		\label{e:auxlin}
		(\forall x\in\H)\quad \scal{(\varUpsilon^{-1}-L^*\circ \Sigma\circ L)x}{x}
		&=\|\varUpsilon^{-\frac{1}{2}}x\|^2-\|\Sigma^{\frac{1}{2}}Lx\|^2\nonumber\\
		&=\|\varUpsilon^{-\frac{1}{2}}x\|^2\left(1-
		\dfrac{\|\Sigma^{\frac{1}{2}}L\varUpsilon^{\frac{1}{2}} 
			\varUpsilon^{-\frac{1}{2}}x\|^2}{\|\varUpsilon^{-\frac{1}{2}}x\|^2}\right).
	\end{align}
	Therefore, since $\varUpsilon^{-\frac{1}{2}}$ is a bijection, by denoting 
	$y=\varUpsilon^{-\frac{1}{2}}x$,  \ref{p:linearop1} yields
	\begin{equation}
		\|\Sigma^{\frac{1}{2}}\circ L\circ \varUpsilon^{\frac{1}{2}}\|
		=\sup_{y\in\H}\dfrac{\|\Sigma^{\frac{1}{2}}L\varUpsilon^{\frac{1}{2}} 
			y\|}{\|y\|}\le 1.
	\end{equation}
	The converse clearly holds by using the norm inequality in 
	the right hand side of \eqref{e:auxlin}.
	\ref{p:linearop2}$\Leftrightarrow$\ref{p:linearop3}: Clear from
	$(\Sigma^{\frac{1}{2}}\circ L\circ 
	\varUpsilon^{\frac{1}{2}})^*=\varUpsilon^{\frac{1}{2}}\circ 
	L^*\circ \Sigma^{\frac{1}{2}}$. 
	\ref{p:linearop3}$\Leftrightarrow$\ref{p:linearop4}: It follows 
	from \ref{p:linearop1}$\Leftrightarrow$\ref{p:linearop2} replacing  
	$\Sigma$ by $\varUpsilon$ and $L$ by $L^*$, respectively.
	\ref{p:linearop1}$\Leftrightarrow$\ref{p:linearop5}: For every 
	$(x,u)\in\H\times\G$,
	\begin{align}
		\label{e:mon1}
		\scal{(x,u)}{\vv(x,u)}
		&=\scal{x}{\varUpsilon^{-1}x-L^*u}+\scal{u}{\Sigma^{-1}u-Lx}\nonumber\\
		&=\scal{x}{(\varUpsilon^{-1}-L^*\Sigma L)x}
		+\scal{\Sigma Lx-u}{Lx}
		+\scal{u}{\Sigma^{-1}u-Lx}\nonumber\\
		&=\scal{x}{(\varUpsilon^{-1}-L^*\Sigma L)x}
		+\|\Sigma^{-1}u-Lx\|_{\Sigma}^2
	\end{align}
	and, by symmetry, we analogously obtain
	\begin{equation}
		\label{e:mon2}
		\scal{(x,u)}{\vv(x,u)}=\scal{u}{(\Sigma^{-1}-L\varUpsilon 
			L^*)u}+\|\varUpsilon^{-1}x-L^*u\|_{\varUpsilon}^2.
	\end{equation}
	Hence, it follows from \ref{p:linearop1} and \eqref{e:mon1}
	that $\scal{(x,u)}{\vv(x,u)}\ge \|\Sigma^{-1}u-Lx\|_{\Sigma}^2$.
	Since \ref{p:linearop1} is equivalent to \ref{p:linearop4}, 
	\eqref{e:mon2} yields 
	$\scal{(x,u)}{\vv(x,u)}\ge\|\varUpsilon^{-1}x-L^*u\|_{\varUpsilon}^2$
	and we obtain \eqref{e:vcocomax}. For the converse implication
	it is enough to combine \eqref{e:mon1} with \eqref{e:vcocomax}. 
	
	For the last assertion, note that  \eqref{e:vcocomax} implies,
	for every $(x,u)\in\H\times\G$,
	\begin{equation}
		\label{e:both}
		\begin{cases}
			\scal{(x,u)}{\vv(x,u)}\ge \T\|\varUpsilon^{-1}x-L^*u\|^2\\
			\scal{(x,u)}{\vv(x,u)}\ge \s\|\Sigma^{-1}u-Lx\|^2.
		\end{cases}
	\end{equation}
	By multiplying the first equation in \eqref{e:both} by 
	$\lambda\in\left[0,1\right]$
	and the second by $(1-\lambda)$ and summing up we obtain
	\begin{align}
		\scal{(x,u)}{\vv(x,u)}&\ge \lambda\T\|\varUpsilon^{-1}x-L^*u\|^2
		+(1-\lambda)\s\|\Sigma^{-1}u-Lx\|^2\nonumber\\
		&\ge\min\{\lambda\T,(1-\lambda)\s\}\|\vv(x,u)\|^2.
	\end{align}
	The result follows by noting that $\lambda\mapsto 
	\min\{\lambda\T,(1-\lambda)\s\}$ is maximized at 
	$\lambda^*=\s/(\T+\s)$.
\end{proof}

\section{Convergence of Algorithm~\ref{alg:SDR}}
\label{sec:primaldual}
Denote by $\boldsymbol{M}\colon\H\oplus\G\to 2^{\H\oplus\G}$
the maximally monotone operator \cite[Proposition~2.7]{skew} 
\begin{equation}
	\label{e:defM}
	\boldsymbol{M}\colon (x,u)\mapsto (Ax+L^*u)\times (B^{-1}u-Lx).
\end{equation}
For every strongly monotone self-adjoint linear operators 
$\varUpsilon\colon\H\to\H$ and $\Sigma\colon\G\to\G$, 
consider the real Hilbert space $\HH$ obtained by endowing 
$\H\times\G$ with the inner product 
$\scal{\cdot}{\cdot}_{\boldsymbol{U}}$, where
$\boldsymbol{U}\colon (x,u)\mapsto (\varUpsilon^{-1}x,\Sigma^{-1}u)$.
More precisely,
\begin{equation}
	\label{e:defnorm}
	\scal{\cdot}{\cdot}_{\boldsymbol{U}}\colon 
	\big((x,u),(y,v)\big)\mapsto 
	\scal{x}{\varUpsilon^{-1}y}+\scal{u}{\Sigma^{-1}v},
\end{equation}
and we denote the associated norm by
$\|\cdot\|_{\boldsymbol{U}}
=\sqrt{\scal{\cdot}{\cdot}_{\boldsymbol{U}}}$.
Observe that, since $\varUpsilon$ and $\Sigma$ are strongly 
monotone, the topologies of $\HH$ and $\H\oplus\G$ are 
equivalent.

\begin{prop} \label{prop:opT}
	In the context of Problem~\ref{prob2}, 
	let $\Sigma\colon \G\to\G$ and $\varUpsilon\colon\H\to\H$ be 
	strongly 
	monotone self-adjoint linear operators such that 
	$U=\varUpsilon^{-1}-L^*\Sigma L$
	is monotone, and define $\bm{T} : \HH
	\to \HH$ by
	
	\begin{equation}\label{def:operatorT}
		\bm{T}\colon 
		\begin{pmatrix}
			x\\u
		\end{pmatrix}\mapsto 
		\begin{pmatrix}
			x_+\\u_+
		\end{pmatrix}=
		\begin{pmatrix}
			J_{\varUpsilon A}\big(x- \varUpsilon L^* 
			\Sigma(\id-J_{\Sigma^{-1} 
				B})(Lx+ 
			\Sigma^{-1} u)\big) \\[1mm]
			\Sigma L (x_{+}-x)+  \Sigma(\id-J_{\Sigma^{-1}B})(Lx+ 
			\Sigma^{-1}u)
		\end{pmatrix}.
	\end{equation}
	
	Then, the following hold:
	\begin{enumerate}
		\item \label{prop:TincluM} 
		For every $(x,u)\in\HH$, we have
		\begin{equation}
			\left( \varUpsilon^{-1}(x- x_+),\Sigma^{-1}(u-u_+) \right) \in 
			\bm{M}\big(x_+, u_+-\Sigma L(x_+-x)\big).
		\end{equation}
		
		\item \label{prop:zerosofT} 
		$\fix\bm{T}=\bm{Z}=\zer\boldsymbol{M}$.
		
		\item \label{prop:Tcasicasifirmly} For every $(\hat{x},\hat{u}) 
		\in \bm{Z}$ and $(x,u)\in\HH$ we have
		\begin{align}
			\|\boldsymbol{T}(x,u)-(\hat{x},\hat{u})\|^2_{\boldsymbol{U}}
			&\le \|(x,u)-(\hat{x},\hat{u})\|^2_{\boldsymbol{U}}-
			\|(x,u)-\boldsymbol{T}(x,u)\|^2_{\boldsymbol{U}}\nonumber\\
			&\hspace{3.5cm}+2\scal{ 
				u_+-u}{ L(x_+-x)}.
		\end{align}
	\end{enumerate}
\end{prop}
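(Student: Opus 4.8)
The plan is to verify the three statements in order, using the defining recurrence of $\bm T$ in \eqref{def:operatorT} together with the characterization $J_M=(\id+M)^{-1}$, i.e. $p=J_M q \iff q-p\in Mp$. Write $v=\Sigma(\id-J_{\Sigma^{-1}B})(Lx+\Sigma^{-1}u)$, $z=x-\varUpsilon L^*v$, $x_+=J_{\varUpsilon A}z$, and $u_+=\Sigma L(x_+-x)+v$, matching the notation in \eqref{eq:algoSDR}.

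\textbf{Statement \ref{prop:TincluM}.} First I would unravel the two resolvent steps. From $x_+=J_{\varUpsilon A}z=(\id+\varUpsilon A)^{-1}z$ we get $\varUpsilon^{-1}(z-x_+)\in Ax_+$, that is
\begin{equation}
\varUpsilon^{-1}(x-x_+)-L^*v\in Ax_+ .
\end{equation}
Adding and subtracting, $\varUpsilon^{-1}(x-x_+)\in Ax_+ + L^*v$, and since $v=u_+-\Sigma L(x_+-x)$ this reads $\varUpsilon^{-1}(x-x_+)\in Ax_+ + L^*\big(u_+-\Sigma L(x_+-x)\big)$, which is the first coordinate of the claimed membership in $\bm M$ evaluated at $\big(x_+,u_+-\Sigma L(x_+-x)\big)$. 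For the second coordinate, set $w=J_{\Sigma^{-1}B}(Lx+\Sigma^{-1}u)$, so $v=\Sigma\big((Lx+\Sigma^{-1}u)-w\big)=\Sigma Lx+u-\Sigma w$; the resolvent identity gives $(Lx+\Sigma^{-1}u)-w\in\Sigma^{-1}Bw$, i.e. $\Sigma^{-1}v\in\Sigma^{-1}Bw$, hence $v\in Bw$, equivalently $w\in B^{-1}v$. Now $w=Lx+\Sigma^{-1}u-\Sigma^{-1}v=Lx+\Sigma^{-1}(u-u_+)+\Sigma^{-1}\Sigma L(x_+-x)=Lx_++\Sigma^{-1}(u-u_+)$, so $Lx_++\Sigma^{-1}(u-u_+)\in B^{-1}v=B^{-1}\big(u_+-\Sigma L(x_+-x)\big)$, which rearranges to $\Sigma^{-1}(u-u_+)\in B^{-1}\big(u_+-\Sigma L(x_+-x)\big)-Lx_+$, the second coordinate. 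This is the bookkeeping-heavy step, but it is purely algebraic.

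\textbf{Statement \ref{prop:zerosofT}.} Recall $\bm Z=\zer\bm M$ by \eqref{e:defM} and \eqref{Z}. If $(x,u)\in\fix\bm T$, then $x_+=x$ and $u_+=u$, so \ref{prop:TincluM} gives $(0,0)\in\bm M(x,u)$, i.e. $(x,u)\in\zer\bm M=\bm Z$. Conversely, if $(x,u)\in\zer\bm M$, then $0\in Ax+L^*u$ and $0\in B^{-1}u-Lx$; the latter gives $u\in Bw$ with $w=Lx$, hence $Lx+\Sigma^{-1}u-J_{\Sigma^{-1}B}(Lx+\Sigma^{-1}u)\in\Sigma^{-1}u$ forces $J_{\Sigma^{-1}B}(Lx+\Sigma^{-1}u)=Lx$ and therefore $v=u$; then $z=x-\varUpsilon L^*u$ and $x-z=\varUpsilon L^*u\in\varUpsilon Ax$ (using $-L^*u\in Ax$), so $x_+=J_{\varUpsilon A}z=x$, and finally $u_+=\Sigma L(x-x)+u=u$. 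Hence $(x,u)\in\fix\bm T$.

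\textbf{Statement \ref{prop:Tcasicasifirmly}.} This is where the real work lies and is the main obstacle. Fix $(\hat x,\hat u)\in\bm Z$. By \ref{prop:zerosofT}, $(0,0)\in\bm M(\hat x,\hat u)$, and $\bm M$ is monotone on $\H\oplus\G$. Apply monotonicity of $\bm M$ to the pair of points $\big(x_+,u_+-\Sigma L(x_+-x)\big)$ (with associated element from \ref{prop:TincluM}) and $(\hat x,\hat u)$ (with element $0$): abbreviating $a=\varUpsilon^{-1}(x-x_+)$, $b=\Sigma^{-1}(u-u_+)$, this yields
\begin{equation}
\scal{x_+-\hat x}{a}+\scal{u_+-\Sigma L(x_+-x)-\hat u}{b}\ge 0.
\end{equation}
I would then rewrite each inner product in the $\bm U$-geometry: $\scal{x_+-\hat x}{a}=\scal{x_+-\hat x}{x-x_+}_{\varUpsilon^{-1}}$ and $\scal{u_+-\hat u}{b}=\scal{u_+-\hat u}{u-u_+}_{\Sigma^{-1}}$, so the sum of these two is $\scal{\bm T(x,u)-(\hat x,\hat u)}{(x,u)-\bm T(x,u)}_{\bm U}$, while the leftover cross-term is $-\scal{\Sigma L(x_+-x)}{b}=-\scal{L(x_+-x)}{u-u_+}=\scal{u_+-u}{L(x_+-x)}$. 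Thus
\begin{equation}
\scal{\bm T(x,u)-(\hat x,\hat u)}{(x,u)-\bm T(x,u)}_{\bm U}+\scal{u_+-u}{L(x_+-x)}\ge 0 .
\end{equation}
Finally I would invoke the elementary identity valid in any inner-product space (cf. \eqref{e:scalU}),
\begin{equation}
2\scal{p-q}{r-p}_{\bm U}=\|r-q\|_{\bm U}^2-\|p-q\|_{\bm U}^2-\|r-p\|_{\bm U}^2,
\end{equation}
with $p=\bm T(x,u)$, $q=(\hat x,\hat u)$, $r=(x,u)$, to convert the inequality into
\begin{equation}
\|(x,u)-(\hat x,\hat u)\|_{\bm U}^2-\|\bm T(x,u)-(\hat x,\hat u)\|_{\bm U}^2-\|(x,u)-\bm T(x,u)\|_{\bm U}^2+2\scal{u_+-u}{L(x_+-x)}\ge 0 ,
\end{equation}
which is exactly the claimed inequality after rearranging. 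The delicate points are keeping the metric $\bm U=(\varUpsilon^{-1},\Sigma^{-1})$ consistent throughout and correctly tracking the sign of the cross-term produced by the shift $\Sigma L(x_+-x)$ in the second argument of $\bm M$; note that the hypothesis that $U=\varUpsilon^{-1}-L^*\Sigma L$ is monotone is not needed here (it will enter later, via Proposition~\ref{p:linearop}, to control that cross-term), so this part uses only monotonicity of $\bm M$ and the resolvent identities.
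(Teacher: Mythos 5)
Your proposal is correct and follows essentially the same route as the paper: part~\ref{prop:TincluM} by unravelling the two resolvents (the paper phrases this as a chain of equivalences via $\Sigma(\id-J_{\Sigma^{-1}B})=J_{\Sigma B^{-1}}(\Sigma\,\cdot)$, which also gives the converse inclusion in part~\ref{prop:zerosofT} without your explicit computation), and part~\ref{prop:Tcasicasifirmly} by applying monotonicity of $\bm M$ at the point produced in part~\ref{prop:TincluM} against $(\hat x,\hat u)$ and expanding with the polarization identity \eqref{e:scalU} in the $\bm U$-metric. Your closing observation that the monotonicity of $U$ is not used here, and only enters later to control the cross-term, matches the paper's structure exactly.
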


\begin{proof}
	\ref{prop:TincluM}: 
	From \eqref{def:operatorT} and \cite[Proposition~23.34(iii)]{1} we 
	obtain 
	\begin{align}
		\begin{pmatrix}
			x_+\\ u_+
		\end{pmatrix}
		= \bm{T}
		\begin{pmatrix}
			x\\ u
		\end{pmatrix}
		\:\:
		&\Leftrightarrow\:\: \begin{cases}
			x_+= J_{\varUpsilon A}\big(x- \varUpsilon L^* 
			\Sigma(\id-J_{\Sigma^{-1} 
				B})(Lx+ 
			\Sigma^{-1} u)\big) \\[1mm]
			u_+=\Sigma L (x_{+}-x)+  
			\Sigma(\id-J_{\Sigma^{-1}B})(Lx+ 
			\Sigma^{-1}u)
		\end{cases}\nonumber\\
		&\Leftrightarrow\:\: \begin{cases}
			x_+= J_{\varUpsilon A}\big(x- \varUpsilon L^* (u_+-\Sigma 
			L 
			(x_{+}-x))\big) 
			\\[1mm]
			u_+-\Sigma L (x_{+}-x)=J_{\Sigma B^{-1}}(\Sigma Lx+u)
		\end{cases}\nonumber\\
		&\Leftrightarrow\:\: \begin{cases}
			\varUpsilon^{-1}(x - x_+)-L^*(u_+-\Sigma L(x_+-x )) \in  
			Ax_+ \\[1mm]
			\Sigma^{-1}(u- u_+) + Lx_+ \in  B^{-1}\big(u_+-\Sigma 
			L(x_+-x)\big),
		\end{cases}
		\label{e:auxrem}
	\end{align}
	and the result follows from \eqref{e:defM}.
	\ref{prop:zerosofT}: It follows from \ref{prop:TincluM} and 
	\eqref{Z}
	that
	$\bm{T}(\hat{x},\hat{u})=(\hat{x},\hat{u})$ $\Leftrightarrow$
	$(0,0) \in \bm{M}(\hat{x},\hat{u})$ $\Leftrightarrow$
	$(\hat{x},\hat{u})\in\boldsymbol{Z}$.
	\ref{prop:Tcasicasifirmly}:  Let 
	$(\hat{x},\hat{u})\in\boldsymbol{Z}$.
	It follows from \ref{prop:zerosofT} that 
	$(0,0) \in \bm{M}(\hat{x},\hat{u})$. Hence, \ref{prop:TincluM} 
	and 
	the monotonicity of $\bm{M}$ in $\H\oplus\G$
	yield
	\begin{align*}
		0 &\leq \scal{\varUpsilon^{-1}(x - x_+)}{x_+-\hat{x}}+\scal{ 
			\Sigma^{-1}(u -u_+)}{u_+ -\hat{u}+\Sigma L(x -x_+)}\\
		&\overset{\eqref{e:defnorm}}{=} 
		\scal{(x,u)-(x_+,u_+)}{(x_+,u_+)-(\hat{x},\hat{u})}_{\boldsymbol{U}}
		+\scal{u -u_+}{ L(x -x_+)}\\
		&\overset{\eqref{e:scalU}}{=} 
		\frac{1}{2}\big(\|(x,u)-(\hat{x},\hat{u})\|^2_{\boldsymbol{U}}-
		\|(x,u)-(x_+,u_+)\|^2_{\boldsymbol{U}}
		-\|(x_+,u_+)-(\hat{x},\hat{u})\|^2_{\boldsymbol{U}}\big)\nonumber\\
		&\hspace{5cm}+\scal{u-u_+}{ L(x -x_+)}
	\end{align*}
	and the result follows.
\end{proof}
\begin{rem}
	\label{rem:V}
	\begin{enumerate}
		\item \label{rem:Vi}
		Note that \eqref{def:operatorT} and Algorithm~\ref{alg:SDR}
		yield, for every $n\in\N$, 
		$(x_{n+1},u_{n+1})=(x_{n+},u_{n+})=\boldsymbol{T}(x_n,u_n)$.
		This observation and the properties of $\boldsymbol{T}$ in 
		Proposition~\ref{prop:opT} are crucial for the convergence 
		of Algorithm~\ref{alg:SDR} in Theorem~\ref{teo:GDR} below.
		\item 
		Proposition~\ref{prop:opT}\eqref{prop:Tcasicasifirmly} can be 
		written 
		equivalently as, for every $(\hat{x},\hat{u}) \in 
		\bm{Z}$ and $(x,u)\in\HH$,
		$\|\boldsymbol{T}(x,u)-(\hat{x},\hat{u})\|^2_{\boldsymbol{U}}
		\le \|(x,u)-(\hat{x},\hat{u})\|^2_{\boldsymbol{U}}-
		\|(x,u)-\boldsymbol{T}(x,u)\|^2_{\boldsymbol{V}}$,
		where 
		$\boldsymbol{V}\colon (x,u)\mapsto 
		(\varUpsilon^{-1}x-L^*u,\Sigma^{-1}u-Lx)$.
		Since $\varUpsilon^{-1}-L^*\Sigma L$ is 
		monotone, Proposition~\ref{p:linearop} asserts that
		$\boldsymbol{V}$ is self-adjoint, 
		linear, and cocoercive, but not strongly monotone and, thus,
		$\|\cdot\|^2_{\boldsymbol{V}}$ does not define a norm.
	\end{enumerate}
\end{rem}

\begin{teo} \label{teo:GDR}
	In the context of Problem~\ref{prob2}, let $(x_0,u_0) \in \H\times 
	\G$ and consider the 
	sequence $\big( (x_n,u_n) \big)_{n \in \N}$ defined by the 
	Algorithm~\ref{alg:SDR}. Then, the following assertions hold:
	
	\begin{enumerate}
		\item\label{teo:GDR1} $\sum_{n \geq 1} \|x_{n+1}-x_n\|^2 < \pinf$ and 
		$\sum_{n \geq 1} \|u_{n+1}-u_n\|^2< \pinf$.
		
		\item\label{teo:GDR2} There exists $(\hat{x},\hat{u}) \in 
		\bm{Z}$ such that $(x_n,u_n) \weak (\hat{x},\hat{u})$ in $\H\oplus\G$.
	\end{enumerate}   
\end{teo}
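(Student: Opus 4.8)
The plan is to read off both claims from Proposition~\ref{prop:opT} together with the explicit updates in \eqref{eq:algoSDR}. Write $w_n=(x_n,u_n)$; by Remark~\ref{rem:V}\eqref{rem:Vi} we have $w_{n+1}=\boldsymbol{T}w_n$, so Proposition~\ref{prop:opT}\eqref{prop:Tcasicasifirmly}, rewritten as in Remark~\ref{rem:V}, gives for every $(\hat{x},\hat{u})\in\bm{Z}$
\begin{equation*}
\|w_{n+1}-(\hat{x},\hat{u})\|_{\boldsymbol{U}}^2+\|w_n-w_{n+1}\|_{\boldsymbol{V}}^2\le\|w_n-(\hat{x},\hat{u})\|_{\boldsymbol{U}}^2 .
\end{equation*}
Hence $(\|w_n-(\hat{x},\hat{u})\|_{\boldsymbol{U}})_{n\in\N}$ is nonincreasing — so $\lim_n\|w_n-(\hat{x},\hat{u})\|_{\boldsymbol{U}}$ exists and $(w_n)_{n\in\N}$ is bounded, the $\boldsymbol{U}$-norm being equivalent to the product norm — and, summing, $\sum_{n}\|w_n-w_{n+1}\|_{\boldsymbol{V}}^2<\pinf$. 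Using the identity \eqref{e:mon1} with $(x,u)$ replaced by $w_n-w_{n+1}$, together with the last line of \eqref{eq:algoSDR} in the form $u_{n+1}-v_n=\Sigma L(x_{n+1}-x_n)$, this decomposes as
\begin{equation*}
\|w_n-w_{n+1}\|_{\boldsymbol{V}}^2=\scal{x_{n+1}-x_n}{U(x_{n+1}-x_n)}+\|u_n-v_n\|_{\Sigma^{-1}}^2 ,
\end{equation*}
both terms being nonnegative; hence $\sum_n\delta_n<\pinf$ with $\delta_n:=\scal{x_{n+1}-x_n}{U(x_{n+1}-x_n)}$, and $\sum_n\|u_n-v_n\|^2<\pinf$ since $\Sigma$ is bounded.

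The hard part will be \eqref{teo:GDR1}: because $\|\cdot\|_{\boldsymbol{V}}$ is only a seminorm (Remark~\ref{rem:V}), the above controls $U^{1/2}(x_{n+1}-x_n)$ and $u_n-v_n$ but not $x_{n+1}-x_n$ itself when $U$ is singular. To close this gap I would use the resolvent step. From $x_{n+1}=J_{\varUpsilon A}(x_n-\varUpsilon L^*v_n)$ one gets $\varUpsilon^{-1}(x_n-x_{n+1})-L^*v_n\in Ax_{n+1}$; writing the analogous inclusion at index $n-1$, invoking monotonicity of $A$, and substituting the elementary identity $v_n-v_{n-1}=(v_n-u_n)+\Sigma L(x_n-x_{n-1})$ (again from the last line of \eqref{eq:algoSDR}) yields, for $n\ge1$,
\begin{equation*}
\|x_{n+1}-x_n\|_{\varUpsilon^{-1}}^2\le\scal{x_{n+1}-x_n}{U(x_n-x_{n-1})}-\scal{L(x_{n+1}-x_n)}{v_n-u_n}.
\end{equation*}
The first term on the right is $\le\|U^{1/2}(x_{n+1}-x_n)\|\,\|U^{1/2}(x_n-x_{n-1})\|\le\tfrac12(\delta_n+\delta_{n-1})$, and the second is $\le\tfrac{\alpha}{2}\|x_{n+1}-x_n\|^2+\tfrac{\|L\|^2}{2\alpha}\|v_n-u_n\|^2$; picking $\alpha=\|\varUpsilon\|^{-1}$ and using $\|\cdot\|_{\varUpsilon^{-1}}^2\ge\|\varUpsilon\|^{-1}\|\cdot\|^2$ lets me absorb the term $\tfrac{\alpha}{2}\|x_{n+1}-x_n\|^2$ on the left, so that $\|x_{n+1}-x_n\|^2$ is dominated by the summable sequence $\|\varUpsilon\|(\delta_n+\delta_{n-1})+\|\varUpsilon\|^2\|L\|^2\|v_n-u_n\|^2$, giving $\sum_{n\ge1}\|x_{n+1}-x_n\|^2<\pinf$. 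Finally $u_{n+1}-u_n=\Sigma L(x_{n+1}-x_n)+(v_n-u_n)$ yields $\sum_{n\ge1}\|u_{n+1}-u_n\|^2<\pinf$.

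For \eqref{teo:GDR2} I would apply the standard weak-convergence argument for Fejér monotone sequences (see, e.g., \cite[Theorem~5.5]{1}). The first displayed inequality already shows that $\lim_n\|w_n-(\hat{x},\hat{u})\|_{\boldsymbol{U}}$ exists for every $(\hat{x},\hat{u})\in\bm{Z}$, and $\bm{Z}=\zer\boldsymbol{M}$ is closed and convex; it thus remains to check that every weak sequential cluster point of $(w_n)$ lies in $\bm{Z}$. By Proposition~\ref{prop:opT}\eqref{prop:TincluM}, $q_n:=\big(\varUpsilon^{-1}(x_n-x_{n+1}),\Sigma^{-1}(u_n-u_{n+1})\big)\in\boldsymbol{M}(x_{n+1},v_n)$, and by \eqref{teo:GDR1} we have $q_n\to0$ strongly; moreover, if $w_{n_k}\weakly(x^*,u^*)$ then $x_{n_k+1}-x_{n_k}\to0$ and $v_{n_k}-u_{n_k}\to0$ force $(x_{n_k+1},v_{n_k})\weakly(x^*,u^*)$. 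Since $\boldsymbol{M}$ is maximally monotone, its graph is sequentially closed in the weak$\times$strong topology, whence $(0,0)\in\boldsymbol{M}(x^*,u^*)$, i.e.\ $(x^*,u^*)\in\zer\boldsymbol{M}=\bm{Z}$. The cited result then provides a single $(\hat{x},\hat{u})\in\bm{Z}$ with $(x_n,u_n)\weakly(\hat{x},\hat{u})$ in $\H\oplus\G$.

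In short, the only non-routine step is the passage from the degenerate estimate $\sum_n\|w_n-w_{n+1}\|_{\boldsymbol{V}}^2<\pinf$ to genuine $\ell^2$-control of the increments; it is forced by the possible singularity of $U$ and is handled by exploiting monotonicity of $A$ along consecutive iterates plus the telescoping bound $\scal{x_{n+1}-x_n}{U(x_n-x_{n-1})}\le\tfrac12(\delta_n+\delta_{n-1})$. Everything else is the classical Fejér/Opial toolbox.
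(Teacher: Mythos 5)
Your proof is correct, and while it follows the same overall architecture as the paper's (the inequality of Proposition~\ref{prop:opT}\eqref{prop:Tcasicasifirmly}, asymptotic regularity, then demiclosedness of $\gra\bm{M}$ via \cite[Proposition~20.38(ii)]{1} and the standard Fej\'er/Opial conclusion), the asymptotic-regularity step is organized genuinely differently. The paper bounds the cross term $2\scal{u_{n+1}-u_n}{L(x_{n+1}-x_n)}$ directly by the telescoping estimate $-\|x_{n+1}-x_n\|_U^2+\scal{x_n-x_{n-1}}{x_{n+1}-x_n}_U$ (via firm nonexpansiveness of $J_{\varUpsilon A}$ in the $\varUpsilon^{-1}$-metric), obtaining a quasi-Fej\'er inequality with the augmented potential $\|\boldsymbol{x}_n-\boldsymbol{x}\|_{\boldsymbol{U}}^2+\|x_n-x_{n-1}\|_U^2$ and then invoking \cite[Lemma~3.1]{ipa} to get $\sum_n\|\boldsymbol{x}_{n+1}-\boldsymbol{x}_n\|_{\boldsymbol{U}}^2<\pinf$ in one stroke. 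You instead keep the correction term as the nonnegative $\boldsymbol{V}$-seminorm, which yields exact Fej\'er monotonicity in $\|\cdot\|_{\boldsymbol{U}}$ plus the decomposition $\|w_n-w_{n+1}\|_{\boldsymbol{V}}^2=\|x_{n+1}-x_n\|_U^2+\|u_n-v_n\|_{\Sigma^{-1}}^2$ (a clean identity not isolated in the paper), and you then repair the possible degeneracy of $U$ by a second pass through the monotonicity of $A$ along consecutive resolvent outputs combined with Young's inequality. The two arguments consume the same two sources of dissipation — the resolvent of $A$ and the monotonicity of $U$ — but yours makes explicit which quantities are controlled a priori ($\|x_{n+1}-x_n\|_U$ and $\|u_n-v_n\|$) before upgrading to full $\ell^2$ control of the increments, at the price of an extra absorption step; the paper's version is shorter but hides this structure inside the quasi-Fej\'er lemma. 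All the individual estimates you use (the Cauchy--Schwarz bound $\scal{x_{n+1}-x_n}{U(x_n-x_{n-1})}\le\tfrac12(\delta_n+\delta_{n-1})$, the identity $v_n-v_{n-1}=(v_n-u_n)+\Sigma L(x_n-x_{n-1})$, the norm equivalences via $\|\varUpsilon\|$ and $\|\Sigma\|$) check out, and your cluster-point argument correctly identifies $u_{n+1}-\Sigma L(x_{n+1}-x_n)=v_n$ and uses $u_{n}-v_{n}\to 0$ to transfer the weak limit, exactly as in \eqref{incluM2}.
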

\begin{proof}
	Let ${\boldsymbol{x}}=({x},{u})\in \fix \bm{T}$, for every 
	$n\in\N$, denote by $\boldsymbol{x}_n=(x_n,u_n)$, 
	and fix $n\ge 1$. It follows from 
	Remark~\ref{rem:V}\eqref{rem:Vi}
	that $\boldsymbol{x}_{n+1}=\bm{T} \boldsymbol{x}_n$
	and from Proposition~\ref{prop:opT}\eqref{prop:zerosofT} that 
	${\boldsymbol{x}}\in \bm{Z}$. Therefore, 
	Proposition~\ref{prop:opT}\eqref{prop:Tcasicasifirmly} yields
	
	\begin{align}
		\label{e:primdesig}
		\|\boldsymbol{x}_{n+1}-{\boldsymbol{x}}\|^2_{\boldsymbol{U}}
		&\le \|\boldsymbol{x}_n-{\boldsymbol{x}}\|^2_{\boldsymbol{U}}-
		\|\boldsymbol{x}_n-\boldsymbol{x}_{n+1}\|^2_{\boldsymbol{U}}
		+2\scal{u_{n+1}-u_n}{ L(x_{n+1}-x_n)}.
	\end{align}
	Hence, we deduce from the firm 
	non-expansiveness of $J_{\varUpsilon A}$ in 
	$(\H,\scal{\cdot}{\cdot}_{\varUpsilon^{-1}})$
	\cite[Proposition~23.34(i)]{1} and the monotonicity of 
	$U=\varUpsilon^{-1}-L^*\Sigma L$ that 
	\begin{align}
		&\hspace{-.3cm}\scal{u_{n+1}-u_n}{L(x_{n+1}-x_n)} \nonumber\\
		&\hspace{1.5cm}\overset{\eqref{eq:algoSDR}}{=}
		\scal{\Sigma 
			L(x_{n+1}-x_n)+v_n-\Sigma 
			L(x_n-x_{n-1})-v_{n-1}}{L(x_{n+1}-x_n)}\nonumber\\
		&\hspace{1.5cm}=
		\scal{x_{n+1}-x_n}{	L^*\Sigma L(x_{n+1}-x_n)}+
		\scal{L^*(v_n-v_{n-1})}{x_{n+1}-x_n}\nonumber\\
		&\hspace{2.9cm}-\scal{\Sigma L(x_n-x_{n-1})}{L(x_{n+1}-x_n)}
		\nonumber\\
		&\hspace{1.5cm}=\scal{x_{n+1}-x_n}{
			L^*\Sigma L(x_{n+1}-x_n)}+
		\scal{\varUpsilon^{-1}(x_n-x_{n-1})}{x_{n+1}-x_n}\nonumber\\
		& \hspace*{2.9cm} -\scal{(x_n-\varUpsilon 
			L^*v_n-(x_{n-1}-\varUpsilon 
			L^*v_{n-1}))}{x_{n+1}-x_n}_{\varUpsilon^{-1}}\nonumber\\
		& \hspace*{2.9cm} -\scal{\Sigma 
			L(x_n-x_{n-1})}{L(x_{n+1}-x_n)}\nonumber\\
		&\hspace{1.5cm}\leq \scal{x_{n+1}-x_n}{ 
			L^*\Sigma L(x_{n+1}-x_n)}+
		\scal{\varUpsilon^{-1}(x_n-x_{n-1})}{x_{n+1}-x_n}\nonumber\\
		& \hspace*{2.9cm}-\|x_{n+1}-x_n\|_{\varUpsilon^{-1}}^2 -\scal{ 
			L^*\Sigma L(x_n-x_{n-1})}{x_{n+1}-x_n}\nonumber\\
		&\hspace{1.5cm}= 
		-\|x_{n+1}-x_n\|^2_U+\scal{x_n - x_{n-1} 
		}{x_{n+1}-x_n}_U\nonumber\\
		&\hspace{1.5cm}\overset{\eqref{e:scalU}}{=} 
		-\frac{1}{2}\|x_{n+1}-x_n\|^2_U 
		+\frac{1}{2}\|x_n-x_{n-1}\|^2_U -\frac{1}{2}\|x_{n+1}+x_{n-1}- 
		2x_n\|^2_U\nonumber\\
		&\hspace{1.5cm}\leq -\frac{1}{2}\|x_{n+1}-x_n\|^2_U 
		+\frac{1}{2}\|x_n-x_{n-1}\|^2_U.\label{eq:quasi15}
	\end{align}
	Therefore, it follows from \eqref{e:primdesig} that
	\begin{multline}
		\label{eq:quasi2}
		(\forall n\ge 1)\quad 
		\|\boldsymbol{x}_{n+1}-{\boldsymbol{x}}\|^2_{\boldsymbol{U}}
		+\|x_{n+1}-x_n\|^2_U
		\le \|\boldsymbol{x}_n-{\boldsymbol{x}}\|^2_{\boldsymbol{U}}  +
		\|x_n-x_{n-1}\|^2_U\\-
		\|\boldsymbol{x}_n-\boldsymbol{x}_{n+1}\|^2_{\boldsymbol{U}}.
	\end{multline}
	Thus, \cite[Lemma~3.1]{ipa} asserts that 
	\begin{equation}
		\label{e:converges}
		(\forall {\boldsymbol{x}}\in\boldsymbol{Z})\quad 
		\big( 
		\|\boldsymbol{x}_n-{\boldsymbol{x}}\|^2_{\boldsymbol{U}}+
		\|x_n-x_{n-1}\|^2_U \big)_{n\ge 
			1}\quad\text{converges},
	\end{equation}
	that  
	\begin{equation}
		\label{e:summable}
		\sum_{n\ge 
			1}\|\boldsymbol{x}_{n+1}-\boldsymbol{x}_{n}\|^2_{\boldsymbol{U}}<\pinf,
	\end{equation}
	and \ref{teo:GDR1} follows from \eqref{e:defnorm} and the 
	strong monotonicity of $\varUpsilon^{-1}$ and 
	$\Sigma^{-1}$  \cite[p.266]{RieszNagy}. 
	
	In order to prove \ref{teo:GDR2}, note that, from \ref{teo:GDR1} 
	and the uniform continuity of $U$, we deduce 
	$\|x_n-x_{n-1}\|^2_U\to 0$. Hence, \eqref{e:converges} implies that,
	for every ${\boldsymbol{x}}\in\boldsymbol{Z}$, 
	$(\|\boldsymbol{x}_n-{\boldsymbol{x}}\|^2_{\boldsymbol{U}})_{n\in\N}$
	converges. 
	Now, let $(\overline{x},\overline{u}) \in \HH$ be a 
	weak sequential 
	cluster point of $\big( (x_n,u_n) \big)_{n \in \N}$, say 
	$(x_{k_n},u_{k_n})\weak (\overline{x},\overline{u})$ in $\HH$. It is 
	clear from \eqref{e:defnorm} that we have $x_{k_n}\weakly 
	\overline{x}$ in
	$\H$ and $u_{k_n}\weakly \overline{u}$ in $\G$ and from
	\ref{teo:GDR1}  that $x_{k_n+1}\weakly \overline{x}$
	and $u_{k_n+1}\weak \overline{u}$. Hence, since
	Proposition~\ref{prop:opT}\eqref{prop:TincluM} yields 
	\begin{equation}\label{incluM2}
		\left(\varUpsilon^{-1}(x_{k_n}- x_{k_n+1}),\Sigma^{-1}(u_{k_n}  
		-u_{k_n+1})  
		\right) \in \bm{M}\big(x_{k_n+1}, u_{k_n+1}-\Sigma 
		L(x_{k_n+1}-x_{k_n})\big),
	\end{equation}
	we deduce from \ref{teo:GDR1}, 
	the uniform continuity of $\Sigma L$, $\varUpsilon^{-1}$, and 
	$\Sigma^{-1}$, 
	and
	\cite[Proposition~20.38(ii)]{1}, that $(0,0) \in 
	\bm{M}(\overline{x},\overline{u})$. 
	Therefore, we conclude from  \cite[Lemma~2.47]{1} that 
	there exists $\hat{\boldsymbol{x}}\in\fix\bm{T}$ such that 
	$\boldsymbol{x}_n\weakly \hat{\boldsymbol{x}}$ and the 
	result follows
	from the equivalence of the topologies of $\HH$ 
	and $\H\oplus\G$.
\end{proof}

\begin{rem} 
	\label{rem:SDR}
	\begin{enumerate} 
		\item In  the proof of Theorem~\ref{teo:GDR}, we can also 
		deduce that any weak accumulation point of $((x_n,u_n))_{n\in\N}$
		is in $\boldsymbol{Z}$ by using the points in the graph 
		of $A$ 
		and $B$ obtained from \eqref{e:auxrem} and 
		\cite[Proposition~26.5(i)]{1}.
		\item The method can include summable errors in the 
		computation 
		of resolvents and linear operators, by using standard Quasi-F\'ejer 
		sequences. We prefer to not include this extension 
		for simplicity of our algorithm formulation.
		
		\item Consider the 
		sequences $(v_n)_{n \in \N}$, $(z_n)_{n \in \N}$, 
		$(x_n)_{n \in \N}$, 
		$(u_n)_{n \in \N}$ defined by  Algorithm~\ref{alg:SDR} 
		with starting point $(x_0,u_0)\in\H\times\G$.
		It 
		follows from \eqref{eq:algoSDR} and \cite[Proposition~23.34(iii)]{1}
		that, for every $n\in\N$,
		\begin{align*}
			{v}_{n+1} & = \Sigma 
			(\id-J_{\Sigma^{-1}B})(Lx_{n+1}+\Sigma^{-1}u_{n+1})\\
			& = J_{\Sigma B^{-1}}(\Sigma Lx_{n+1}+u_{n+1})\\
			& = J_{\Sigma B^{-1}}({v}_n+\Sigma L(2x_{n+1}-x_n)),
		\end{align*}
		leading to
		\begin{equation}\label{eq:algoPD}
			(\forall n\in\N)\quad 
			\begin{array}{l}
				\left\lfloor
				\begin{array}{l}
					{x}_{n+1}= J_{\varUpsilon A} ({x}_n -\varUpsilon L^* {v}_n)\\
					{v}_{n+1}=J_{\Sigma B^{-1}}({v}_n+\Sigma 
					L(2{x}_{n+1}-{x}_n)),
				\end{array}
				\right.
			\end{array}
		\end{equation}
		with starting point $(x_0,\Sigma
		(\id-J_{\Sigma^{-1}B})(Lx_{0}+\Sigma^{-1}u_{0}))\in\H\times\G$.
		When $\|\Sigma^{\frac{1}{2}}\circ L\circ 
		\varUpsilon^{\frac{1}{2}}\|<1$, 
		\eqref{eq:algoPD} is equivalent to the proximal point algorithm 
		applied to $\boldsymbol{V}^{-1}\boldsymbol{M}$ in 
		$(\H\times\G,\scal{\cdot}{\cdot}_{\boldsymbol{V}})$, where 
		$\boldsymbol{V}\colon (x,u)\mapsto 
		(\varUpsilon^{-1}x-L^*u,\Sigma^{-1}u-Lx)$ is strongly monotone in 
		view of \cite[Lemma~1]{pockcham}. Moreover, 
		when $\varUpsilon=\tau\id$, $\Sigma=\s\id$, and $\s \T \|L\|^2 < 
		1$, 
		\eqref{eq:algoPD} coincides with 
		the PDS in \eqref{e:condatvu} \cite{cp,condat,yuan,vu}.
		As stated in Remark~\ref{rem:V}, under our assumptions 
		$\boldsymbol{V}$ is no longer strongly monotone and the same 
		approach cannot be used. On the other hand, a generalization of 
		the previous approach is provided in \cite{vu} using the 
		forward-backward 
		splitting in order to allow cocoercive operators in the monotone 
		inclusion when $\boldsymbol{V}$ is strongly monotone. In the 
		optimization context, the inclusion of cocoercive operators allows for 
		convex differentiable functions with $\beta^{-1}-$Lipschitz gradients
		in the objective function and the convergence results are guaranteed 
		under the more restrictive assumption $\s \T \|L\|^2 < 1-\T/2\beta$ 
		\cite[Theorem~3.1]{condat}. Hence, the inclusion of cocoercive 
		operators modifies our monotonicity assumption on $U$ in 
		Algorithm~\ref{alg:SDR} distancing us from our main results. This  
		leads us to consider this extension
		as part of further research.
		
		\item 
		We deduce from \eqref{eq:algoPD} and \eqref{eq:algoSDR} that
		the primal iterates of SDR coincide with those of PDS 
		in \eqref{eq:algoPD} and SDR includes an additional
		inertial step in the dual updates, more precisely,
		\begin{equation}
			\label{e:relcondat}
			(\forall n\in\N)\quad 
			u_{n+1}=\Sigma L(x_{n+1}-x_n)+v_n.
		\end{equation}
		Hence, it follows from 
		Theorem~\ref{teo:GDR}\eqref{teo:GDR1}\&\eqref{teo:GDR2} and 
		the uniform continuity of $\Sigma L$ that
		$v_n\weakly \hat{u}$.
		As a consequence, we obtain the primal-dual weak convergence 
		of \eqref{eq:algoPD} when
		$\|\Sigma^{\frac{1}{2}}\circ L\circ \varUpsilon^{\frac{1}{2}}\| \le 
		1$, which generalizes \cite[Theorem~1]{pockcham}
		and \cite[Theorem~3.3]{condat}, in the case when 
		$\Sigma=\s\id$ 
		and 
		$\varUpsilon=\T\id$, 
		to monotone inclusions and infinite dimensions. 
		
		\item 
		\label{rem:SDRvu}
		By using product space techniques, 
		Algorithm~\ref{alg:SDR} allows us to solve 
		\begin{equation}
			\label{e:vured}
			\text{find }\hat{x}\in\H\:\:\text{such that}\quad  0 \in A 
			\hat{x}+\sum_{i=1}^{m} L_{i}^{*}B_{i} 
			L_{i} \hat{x},
		\end{equation}
		where, for every $i\in\{1,\ldots, m\}$, $\G_i$ is a real Hilbert 
		space, $A\colon \H\to 2^{\H}$ and $B_i\colon \G_i\to 2^{\G_i}$ 
		are maximally monotone, and $L_i\colon \H\to \G_i$ is a linear 
		bounded operator. Indeed, by setting $\G=\oplus_{1\le i\le 
			m}\G_i$, $B\colon (u_i)_{1\le i\le m}\mapsto 
		\times_{i=1}^mB_iu_i$, and 
		$L\colon x\mapsto (L_ix)_{1\le i\le m}$, \eqref{e:vured} is 
		equivalent to \eqref{e:priminc}. Hence, by setting $\Sigma\colon 
		(u_i)_{1\le i\le m}\mapsto  (\Sigma_iu_i)_{1\le i\le m}$, where 
		$(\Sigma_i)_{1\le i\le m}$ are strongly monotone operators, previous 
		remark allows us to write Algorithm~\ref{alg:SDR} as 
		\begin{equation}\label{eq:algoPDvu}
			(\forall n\in\N)\quad 
			\begin{array}{l}
				\left\lfloor
				\begin{array}{l}
					{x}_{n+1}= J_{\varUpsilon A} ({x}_n -\varUpsilon \sum_{i=1}^mL_i^* 
					{v}_{i,n})\\
					{v}_{1,n+1}=J_{\Sigma_1 B_1^{-1}}({v}_{1,n}+\Sigma_1 
					L_1(2{x}_{n+1}-{x}_n))\\
					\hspace{1.2cm}\vdots\\
					{v}_{m,n+1}=J_{\Sigma_m B_m^{-1}}({v}_{m,n}+\Sigma_1 
					L_m(2{x}_{n+1}-{x}_n)),
				\end{array}
				\right.
			\end{array}
		\end{equation}
		and the weak convergence of $(x_n)_{n\in\N}$ to a solution to 
		\eqref{e:vured} is guaranteed by Theorem~\ref{teo:GDR}, 
		assuming that
		\begin{equation}
			\label{e:condvu}
			\varUpsilon^{-1}-\sum_{i=1}^mL_i^*\Sigma_i L_i\quad\text{is 
				monotone.}
		\end{equation}
		Note that 
		\eqref{eq:algoPDvu} has the same structure as the algorithm 
		in
		\cite[Corollary~6.2]{CombVu14} without considering cocoercive 
		operators or relaxation steps, but the convergence is guaranteed 
		under the weaker assumption \eqref{e:condvu}.

		\item Suppose that $\ran L^*=\H$ and that 
		$\varUpsilon=(L^*\Sigma 
		L)^{-1}$. Then, $U=\varUpsilon^{-1}-L^*\Sigma L=0$ and the operator 
		$\TT$ defined in \eqref{def:operatorT} is 
		firmly quasinonexpansive in $\HH$, in view of 
		Proposition~\ref{prop:opT}\eqref{prop:Tcasicasifirmly}  and 
		\eqref{eq:quasi2}.
		We thus generalize \cite[Corollary~3]{svaiter}.
		Observe that, in the particular case when $L=\id$, we 
		have $\varUpsilon=\Sigma^{-1}$ and
		the operator $\TT$ defined in \eqref{def:operatorT} reduces to
		$\bm{T}\colon (x,u)\mapsto 
		\Phi_A^{\varUpsilon}(J_{\varUpsilon  B}(x+ \varUpsilon  
		u)-\varUpsilon u),$
		where
		\begin{equation}\label{def:operatorMinty}
			\Phi_A^{\varUpsilon}\colon \H\mapsto\H\times\H\colon 
			z\mapsto 
			(J_{\varUpsilon A}z,\varUpsilon^{-1}(J_{\varUpsilon A}-\id)z).
		\end{equation}
		In the case when $\varUpsilon=\T\id$, we recover the operator in 
		\cite[Proposition~5.18]{BAR19}, which is inspired by
		\cite{svaiter}. Moreover, note that the 
		inner product $\scal{\cdot}{\cdot}_{\boldsymbol{U}}$ defined in 
		\eqref{e:defnorm} coincides with that in \cite{svaiter} (up to a 
		multiplicative constant). Altogether, Theorem~\ref{teo:GDR} 
		generalizes
		\cite{svaiter} for an arbitrary operator $L$ and non-standard 
		metrics. It also generalizes \cite[Theorem~5.1]{gabay83} from
		variational inequalities to arbitrary monotone inclusions and it provides 
		the weak convergence of shadow sequences $(J_{\tau 
			A}z_n)_{n\in\N}$ (not guaranteed in \cite{gabay83}).
		
		\item Note that, by storing $(Lx_n)_{n\in\N}$, Algorithm~\ref{alg:SDR} 
		only needs to compute $L$ once at each iteration. This observation is 
		important in high dimensional problems in which the computation of 
		$L$ is numerically expensive. 
	\end{enumerate}
\end{rem}
The following result establishes the reduction of
Algorithm~\ref{alg:SDR} to Dou\-glas-Rachford splitting
\cite{eckstein-bertsekas,lions-mercier79}
in the case when $\ran L=\G$.
\begin{prop}\label{prop:GDRtoDR}
	In the context of Problem~\ref{prob2}, assume $\ran L=\G$
	and set $\Sigma=(L\varUpsilon L^*)^{-1}$. 
	Then, 
	Algorithm~\ref{alg:SDR} with starting point $(x_0,u_0)\in\H\times\G$ 
	reduces to the recurrence 
	\begin{equation}
		\label{e:DRS}
		(\forall n\in \N) \quad z_{n+1}=J_{\varUpsilon L^* B L} 
		(2J_{\varUpsilon A}z_n - z_n 
		)+z_n- J_{\varUpsilon A} z_n,
	\end{equation}
	where $z_0=x_0-\varUpsilon 
	L^*\Sigma(\id-J_{\Sigma^{-1}B})(Lx_0+\Sigma^{-1}u_0)$.
\end{prop}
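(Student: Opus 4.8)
The plan is to track the auxiliary sequences $(v_n)_{n\in\N}$ and $(z_n)_{n\in\N}$ generated by Algorithm~\ref{alg:SDR} and to show that $(z_n)_{n\in\N}$ obeys the recurrence \eqref{e:DRS} with the stated initial point. First I would check that the chosen parameters are admissible: since $\ran L=\G$, $L^{*}$ is bounded below, so $L\varUpsilon L^{*}$ is a strongly monotone self-adjoint operator on $\G$, hence invertible, and $\Sigma=(L\varUpsilon L^{*})^{-1}$ inherits those properties. Moreover $\Sigma^{\frac12}L\varUpsilon^{\frac12}$ satisfies $(\Sigma^{\frac12}L\varUpsilon^{\frac12})(\Sigma^{\frac12}L\varUpsilon^{\frac12})^{*}=\Sigma^{\frac12}(L\varUpsilon L^{*})\Sigma^{\frac12}=\id_{\G}$, so $\|\Sigma^{\frac12}L\varUpsilon^{\frac12}\|=1$ and Proposition~\ref{p:linearop} yields that $U=\varUpsilon^{-1}-L^{*}\Sigma L$ is monotone; hence Algorithm~\ref{alg:SDR} applies, and by construction $z_0=x_0-\varUpsilon L^{*}v_0$ coincides with the point in the statement.

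Next I would simplify the dual update. By the identity recorded in Remark~\ref{rem:SDR} (see \eqref{eq:algoPD}), for every $n\in\N$ we have $v_{n+1}=J_{\Sigma B^{-1}}\big(v_n+\Sigma L(2x_{n+1}-x_n)\big)$, and from the second line of \eqref{eq:algoSDR} also $x_n=z_n+\varUpsilon L^{*}v_n$. Using the algebraic fact $\Sigma L\varUpsilon L^{*}=\id_{\G}$ (equivalently $\Sigma^{-1}=L\varUpsilon L^{*}$), the shift by $x_n$ collapses:
\[
v_n+\Sigma L(2x_{n+1}-x_n)=\Sigma L(2x_{n+1}-z_n)+v_n-\Sigma L\varUpsilon L^{*}v_n=\Sigma L(2x_{n+1}-z_n),
\]
so $v_{n+1}=J_{\Sigma B^{-1}}\big(\Sigma L(2x_{n+1}-z_n)\big)$ and consequently $z_{n+1}=x_{n+1}-\varUpsilon L^{*}J_{\Sigma B^{-1}}\big(\Sigma L(2x_{n+1}-z_n)\big)$, with $x_{n+1}=J_{\varUpsilon A}z_n$.

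The heart of the argument is the resolvent formula
\[
(\forall w\in\H)\qquad J_{\varUpsilon L^{*}BL}(w)=w-\varUpsilon L^{*}J_{\Sigma B^{-1}}(\Sigma L w),
\]
which in passing shows that $J_{\varUpsilon L^{*}BL}$ is single-valued with full domain. For one inclusion, put $r=J_{\Sigma B^{-1}}(\Sigma Lw)$ and $p=w-\varUpsilon L^{*}r$; then $Lp=Lw-L\varUpsilon L^{*}r=Lw-\Sigma^{-1}r$, and since $r=J_{\Sigma B^{-1}}(\Sigma Lw)$ is equivalent to $Lw-\Sigma^{-1}r\in B^{-1}r$, we get $Lp\in B^{-1}r$, i.e.\ $r\in BLp$, whence $w-p=\varUpsilon L^{*}r\in\varUpsilon L^{*}BLp$ and $p=J_{\varUpsilon L^{*}BL}(w)$. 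For the reverse inclusion, if $p=J_{\varUpsilon L^{*}BL}(w)$ then $\varUpsilon^{-1}(w-p)\in L^{*}BLp$; because $\ran L=\G$ forces $\ker L^{*}=\{0\}$, the element $q\in BLp$ with $\varUpsilon^{-1}(w-p)=L^{*}q$ is unique, and running the previous computation backwards ($Lw-\Sigma^{-1}q=Lw-L\varUpsilon L^{*}q=Lp\in B^{-1}q$) gives $q=J_{\Sigma B^{-1}}(\Sigma Lw)$, hence $p=w-\varUpsilon L^{*}J_{\Sigma B^{-1}}(\Sigma Lw)$. Plugging $w=2x_{n+1}-z_n=2J_{\varUpsilon A}z_n-z_n$ into this formula and combining with the expression for $z_{n+1}$ from the previous paragraph yields $z_{n+1}=J_{\varUpsilon L^{*}BL}(2J_{\varUpsilon A}z_n-z_n)+z_n-J_{\varUpsilon A}z_n$, which is \eqref{e:DRS}.

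The main obstacle is the resolvent formula, and inside it the reverse inclusion: one must use the injectivity of $L^{*}$ — available precisely because $\ran L=\G$ — to pin down the multiplier $q$ uniquely, and one must treat the displayed formula as a genuine identity of single-valued operators rather than merely a containment of graphs. Everything else is bookkeeping with the reformulation \eqref{eq:algoPD} and the single relation $\Sigma L\varUpsilon L^{*}=\id_{\G}$.
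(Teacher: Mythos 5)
Your proof is correct and follows the same overall route as the paper's: both reduce the dual update to $v_{n+1}=(L\varUpsilon L^*+B^{-1})^{-1}L(2x_{n+1}-z_n)$ (your $J_{\Sigma B^{-1}}\big(\Sigma L(2x_{n+1}-z_n)\big)$ is the same operator, since $\Sigma^{-1}=L\varUpsilon L^*$) and then conclude via the identity $J_{\varUpsilon L^*BL}=\id-\varUpsilon L^*(L\varUpsilon L^*+B^{-1})^{-1}L$. The one genuine difference is how that identity is obtained: the paper derives it in \eqref{e:rescomp} by combining the change-of-metric formula with the composition formula for resolvents from the reference, using that $(L\varUpsilon^{\frac{1}{2}})(L\varUpsilon^{\frac{1}{2}})^*=L\varUpsilon L^*$ is invertible, whereas you prove it from scratch by chasing the defining inclusions, with the injectivity of $L^*$ (which follows from $\ran L=\G$) pinning down the multiplier $q$ in the reverse direction. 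Your version is self-contained and makes explicit why the formula holds as an identity of single-valued operators with full domain, at the cost of a longer argument; the paper's is shorter but leans on the cited results. You also verify inside the proof that $U=\varUpsilon^{-1}-L^*\Sigma L$ is monotone, so that Algorithm~\ref{alg:SDR} is actually applicable with this choice of $\Sigma$ --- a point the paper records only in the remark following the proposition.
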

\begin{proof}
	Note that $\ran L=\G$ yields, for every $u\in\G$, 
	$\scal{L\varUpsilon 
		L^*u}{u}\,\ge\,\tau\|L^*u\|^2\ge\tau\alpha^2\|u\|^2$,
	where $\tau>0$ is the strong monotonicity parameter of $\varUpsilon$
	and the existence of $\alpha>0$ is guaranteed by \cite[Fact~2.26]{1}.
	Moreover, it follows 
	from \cite[Proposition~23.34(iii)\&(ii)]{1} 
	that, for every $n\in\N$,
	\begin{align}
		\label{e:desvnp1}
		v_{n+1}&\overset{\eqref{eq:algoSDR}}{=}\Sigma
		(\id-J_{\Sigma^{-1}B})(Lx_{n+1}+\Sigma^{-1}u_{n+1})\nonumber\\
		&=(\Sigma^{-1}+
		B^{-1})^{-1}(Lx_{n+1}+\Sigma^{-1}u_{n+1})\nonumber\\
		&\overset{\eqref{eq:algoSDR}}{=}
		(\Sigma^{-1}+
		B^{-1})^{-1}(L(2x_{n+1}-x_n)+\Sigma^{-1}v_{n})\nonumber\\
		&=(L\varUpsilon L^*+
		B^{-1})^{-1}L(2x_{n+1}-x_n+\varUpsilon L^*v_{n}),
	\end{align}
	where the last equality follows from $\Sigma^{-1}=L\varUpsilon 
	L^*$.
	On the other hand, \cite[Proposition~23.34(iii)]{1} yields
	\begin{align}
		\label{e:rescomp}
		J_{\varUpsilon 
			L^*BL}&=\varUpsilon^{\frac{1}{2}}J_{\varUpsilon^{\frac{1}{2}}
			L^*BL\varUpsilon^{\frac{1}{2}}}\varUpsilon^{-\frac{1}{2}}\nonumber\\
		&=\varUpsilon^{\frac{1}{2}}(\id-\varUpsilon^{\frac{1}{2}}L^*
		(L\varUpsilon L^*+ 
		B^{-1})^{-1}L\varUpsilon^{\frac{1}{2}})\varUpsilon^{-\frac{1}{2}}
		\nonumber\\
		&=\id-\varUpsilon L^*(L\varUpsilon L^*+ 
		B^{-1})^{-1}L,
	\end{align}
	where the second equality follows from 
	\cite[Proposition~23.25(ii)]{1}
	since 
	$(L\varUpsilon^{\frac{1}{2}})(L\varUpsilon^{\frac{1}{2}})^*=L\varUpsilon
	L^*$ is invertible. Hence, we have
	\begin{align*}
		z_{n+1}&\overset{\eqref{eq:algoSDR}}{=}x_{n+1}-\varUpsilon 
		L^*v_{n+1}\\
		&\overset{\eqref{e:desvnp1}}{=}x_{n+1} - \varUpsilon L^* 
		(L\varUpsilon L^*+ 
		B^{-1})^{-1}L( 
		2x_{n+1}-x_n+\varUpsilon 
		L^*v_n)\\
		&\overset{\eqref{eq:algoSDR}}{=} \left(\id-  \varUpsilon L^* 
		(L\varUpsilon L^*+ 
		B^{-1})^{-1}L\right)(2J_{\varUpsilon A} - \id 
		)z_n+(\id- J_{\varUpsilon A})z_n\\
		&\overset{\eqref{e:rescomp}}{=}J_{\varUpsilon L^* B L} 
		(2J_{\varUpsilon A} - \id )z_n+(\id- 
		J_{\varUpsilon A})z_n
	\end{align*}
	and $z_0$ is obtained from 
	\eqref{eq:algoSDR}.
\end{proof}

\begin{rem}
	Note that $\Sigma=(L\varUpsilon L^*)^{-1}$ is equivalent to 
	$\Sigma^{-1}-L\varUpsilon L^*=0$ and, hence,  
	$\varUpsilon^{-1}-L^*\Sigma L$
	is monotone in view of Proposition~\ref{p:linearop}. Therefore,
	Proposition~\ref{prop:GDRtoDR} and Theorem~\ref{teo:GDR}
	provide the weak convergence of the non-standard 
	metric version of DRS in 
	\eqref{e:DRS} when $\ran L=\G$.
	This also extends the convergence result in \cite{svaiter}.
\end{rem}

\section{Split ADMM}
\label{sec:SADMM}
In this section we study the numerical approximation of the 
following convex optimization problem. 
\begin{pro}\label{prob:optipd}
	Let $\H$, $\G$, and $\K$ be real Hilbert spaces. Let 
	$g\in\Gamma_0( \K) $, let $f\in\Gamma_0 (\H)$, and let $T: \K 
	\to \G$ and $K:\G \to \H$ be 
	non-zero bounded linear operators  such that $\ran T^*\cap\dom 
	g^*\ne\varnothing$.
	Consider the following optimization problem 
	\begin{equation}\label{eq:primalminproblem}\tag{$P$}
		\min_{y \in \K} \big(g(y)+ f(K T y)\big)
	\end{equation}
	together with the associated Fenchel-Rockafellar dual
	\begin{equation}\label{eq:dualminproblem}\tag{$D$}
		\min_{x \in \H} \big(f^*(x)+g^*(- T^* K^* x)\big).
	\end{equation}
	Moreover, consider the following Fenchel-Rockafellar dual 
	problem 
	associated to 
	\eqref{eq:dualminproblem}
	\begin{equation}\label{eq:dualminproblemdual}\tag{$P^*$}
		\min_{u \in \G} \big((g^*\circ -T^*)^* (u) + f(-K u)\big).
	\end{equation}
	We denote by
	$S_{P}$, $S_{D}$, and $S_{P^*}$ the set of solutions to 
	\eqref{eq:primalminproblem}, \eqref{eq:dualminproblem}, and 
	\eqref{eq:dualminproblemdual}, respectively.
\end{pro}
In the particular case when $K=\id$, Problem~\ref{prob:optipd}
is also considered in 
\cite{ecksteinthesis,gabay83,patrinos2020,yin2016} and ADMM is 
derived in \cite{gabay83} by applying DRS 
to the first order optimality conditions of 
\eqref{eq:dualminproblem}, with 
$A= \partial f^*$ and 
$B= \partial (g^* \circ (- T^*K^*))$. We generalize this procedure
by applying Algorithm~\ref{alg:SDR} to \eqref{eq:dualminproblem}
with $A= \partial f^*$,
$B= \partial (g^* \circ (- T^*))$, and $L=K^*$. We thus obtain the 
Split-ADMM (SADMM), which splits $K$ from $T$. We now 
provide an example in which this new formulation is relevant.
\begin{ejem}
	\label{ex:sadmm}
	Let $A$ and $M$ be $n\times N$ and $m\times N$ real matrices, 
	respectively,
	let $b\in\R^n$, let $\phi\in\Gamma_0(\R^m)$, let 
	$h\in\Gamma_0(\R^n)$, and consider the 
	optimization problem
	\begin{equation}
		\label{e:datafid}
		\min_{y\in \R^N}h(Ay-b)+\phi(My).
	\end{equation} 
	This problem arises in image and signal restoration and 
	denoising 
	\cite{ChambLions97,Pustelnik2019,Daube04,Liu2014,Pang2017,Sha2020}.
	If $M$ is symmetric and 
	positive definite, as in graph Laplacian regularization (see, e.g., 
	\cite[Section~II.B]{Liu2014} and \cite{Pang2017,Sha2020} for 
	alternative regularizations),
	there exist $P$ unitary and $D$ diagonal such that $M=PDP^{\top}$.
	Therefore, by setting $\eta\in\left]0,1\right[$, $K=PD^{\eta}P^{\top}$,
	$T=PD^{1-\eta}P^{\top}$, $g=\phi$, and $f=h(A\cdot-b)$,
	\eqref{e:datafid} is a particular instance of 
	\eqref{eq:primalminproblem}. In some instances, the resolvent 
	computation of $\partial (g^*\circ -T^*)$ is simpler to solve than that of 
	$\partial (g^*\circ -T^*K^*)$ when $\eta\sim 1$, since 
	$D^{1-\eta}\sim\id$. The numerical advantage of this 
	approach is 
	illustrated in an academical example in Section~\ref{sec:toy}. 
	
	Other potential applications arise naturally when 
	$y=\Phi z$, where $z$ denotes frequencies or wavelet coefficients
	of an image $y$ and $\Phi$ is a frame or unitary linear operator 
	allowing to pass from frequencies to images. Therefore,
	\eqref{e:datafid} is a particular case of \eqref{eq:primalminproblem}
	when $f=h(\cdot-b)$, $g=\phi\circ M\circ \Phi$, $K=A$, and $T=\Phi$.
	The properties of $T$ in this case also make preferable to split $T$
	from $K$. 
\end{ejem}
First we 
provide some existence 
results and connections between problems 
\eqref{eq:primalminproblem}, \eqref{eq:dualminproblem}, and 
\eqref{eq:dualminproblemdual}.
\begin{prop}\label{prop:relacionproblemasduales}
	In the context of Problem~\ref{prob:optipd}, 
	consider the inclusion
	\begin{equation} \label{eq:z_opt1}
		\text{find}\quad (\hat{x}, \hat{u}) \in \H \times \G \quad \text{such that} \quad\begin{cases}
			0 \in \partial f^* (\hat{x})+K\hat{u}\\
			0 \in  \partial(g^*\circ-T^*)^*(\hat{u})-K^*\hat{x}.
		\end{cases}
	\end{equation}
	\begin{enumerate}
		\item Suppose that there exists 
		$\hat{y} \in S_P$ and that one of the following 
		assertions hold:
		\begin{enumerate}
			\item \label{prop:relacionproblemasdualesa} 
			$0 \in \partial g (\hat{y}) + T^*K^* \partial f( K T  
			\hat{y})$.
			\item \label{prop:relacionproblemasduales1}  
			$0 \in \sri(\dom f - KT \dom g)$.
		\end{enumerate}
		Then, there exists $\hat{x}\in S_D$ such that
		$(\hat{x},-T\hat{y})$ is a solution to 
		\eqref{eq:z_opt1}. 
		
		\item Suppose that there exists $\hat{x} \in 
		S_D$ and that one of the following 
		assertions hold:
		\begin{enumerate}
			\item \label{prop:relacionproblemasdualesb} 
			$0 \in \partial f^* (\hat{x}) - KT \partial g^*(-T^*K^*  
			\hat{x})$.
			\item \label{prop:relacionproblemasduales2} 
			$0 \in \sri(\dom g^*-T^*K^*\dom 
			f^*)$.
			\item \label{prop:relacionproblemasduales3} 
			$0 \in \sri(\dom (g^*\circ-T^*)-K^*\dom f^*)$ and $0\in\sri (\dom 
			g^*-\ran\, T^*)$.
		\end{enumerate}
		Then, there exists $\hat{y}\in S_P$ such that $(\hat{x},-T\hat{y})$ is a 
		solution to \eqref{eq:z_opt1}.
		
		\item 
		\label{prop:relacionproblemasduales3real} 
		Suppose that there exists $(\hat{x},\hat{u})$ solution to 
		\eqref{eq:z_opt1} and that  $0 \in \sri (\dom g^* - \ran T^*)$. Then, 
		$(\hat{x},\hat{u})\in S_D\times S_{P^*}$ and
		there exists $\hat{y}\in S_P$ such that $\hat{u}=-T\hat{y}$.
	\end{enumerate}
\end{prop}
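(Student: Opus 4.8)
\textbf{Proof plan for Proposition~\ref{prop:relacionproblemasduales}.}
The three parts share a common skeleton: translate first-order optimality conditions of each optimization problem into monotone inclusions, then match them against \eqref{eq:z_opt1}. The plan is to treat each part in turn, using Fermat's rule, the sum rule for subdifferentials (valid under the stated strong-relative-interior conditions), and the standard identities $(\partial f)^{-1}=\partial f^*$ and $\partial(g^*\circ S)=S^*\circ\partial g^*\circ S$ when $S$ is linear bounded and a qualification condition holds. A key preliminary observation, used throughout, is that $(g^*\circ-T^*)^*$ is exactly the closed convex function whose subdifferential inverts $-T^*\circ\partial g^*\circ(-T^*)$, so that $0\in\partial(g^*\circ-T^*)^*(\hat u)-K^*\hat x$ should be read as $\hat u\in-T\,\partial g^*(-T^*K^*\hat x)$ once the inner qualification condition $0\in\sri(\dom g^*-\ran T^*)$ is available (this is where part (3)'s hypothesis and the alternative (c) in part (2) enter).

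For part (1): from $\hat y\in S_P$, Fermat's rule gives $0\in\partial(g+f\circ KT)(\hat y)$. Under \ref{prop:relacionproblemasduales1} the sum rule applies and yields $0\in\partial g(\hat y)+T^*K^*\partial f(KT\hat y)$, which is precisely hypothesis \ref{prop:relacionproblemasdualesa}; so in either case there is $\hat x\in\partial f(KT\hat y)$ with $-T^*K^*\hat x\in\partial g(\hat y)$. The first membership inverts to $KT\hat y\in\partial f^*(\hat x)$, i.e.\ with $\hat u:=-T\hat y$ we get $-K\hat u\in\partial f^*(\hat x)$, which is the first line of \eqref{eq:z_opt1}. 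The second membership, $-T^*K^*\hat x\in\partial g(\hat y)$, inverts to $\hat y\in\partial g^*(-T^*K^*\hat x)$, hence $\hat u=-T\hat y\in-T\,\partial g^*(-T^*K^*\hat x)=\partial(g^*\circ-T^*)^*$ evaluated appropriately — here one needs the chain-rule/conjugate identity, which is legitimate because $\hat y$ realizes the infimum defining $(g^*\circ-T^*)^*(\hat u)$. That $\hat x\in S_D$ then follows by reading the first line of \eqref{eq:z_opt1} back through Fermat's rule for \eqref{eq:dualminproblem}. Part (2) is the mirror image: from $\hat x\in S_D$ and Fermat, $0\in\partial f^*(\hat x)+KT\,\partial g^*(-T^*K^*\hat x)$ under \ref{prop:relacionproblemasduales2} (the sum rule), or directly under \ref{prop:relacionproblemasdualesb}; alternative \ref{prop:relacionproblemasduales3} reaches the same inclusion in two steps, first differentiating $g^*\circ(-T^*)\circ(-K^*)$ using $0\in\sri(\dom(g^*\circ-T^*)-K^*\dom f^*)$ and then differentiating $g^*\circ(-T^*)$ using $0\in\sri(\dom g^*-\ran T^*)$. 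Pick $\hat u\in-T\,\partial g^*(-T^*K^*\hat x)$ with $-K\hat u\in\partial f^*(\hat x)$; set $\hat y$ to be the corresponding element of $\partial g^*(-T^*K^*\hat x)$ so that $\hat u=-T\hat y$, and check $\hat y\in S_P$ by running the primal optimality conditions backwards.

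For part (3): given a solution $(\hat x,\hat u)$ of \eqref{eq:z_opt1}, the first line already says $-K\hat u\in\partial f^*(\hat x)$. Using the hypothesis $0\in\sri(\dom g^*-\ran T^*)$, invert the second line: $0\in\partial(g^*\circ-T^*)^*(\hat u)-K^*\hat x$ becomes $K^*\hat x\in\partial(g^*\circ-T^*)^*(\hat u)$, equivalently $\hat u\in\partial(g^*\circ-T^*)(K^*\hat x)=-T\,\partial g^*(-T^*K^*\hat x)$; choose $\hat y\in\partial g^*(-T^*K^*\hat x)$ with $\hat u=-T\hat y$. Then $-T^*K^*\hat x\in\partial g(\hat y)$ after inverting again, and combining with $-K\hat u=KT\hat y\in\partial f^*(\hat x)$, i.e.\ $\hat x\in\partial f(KT\hat y)$, one gets $0\in\partial g(\hat y)+T^*K^*\partial f(KT\hat y)\subseteq\partial(g+f\circ KT)(\hat y)$, so $\hat y\in S_P$. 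That $\hat x\in S_D$ is immediate from the first line of \eqref{eq:z_opt1} and Fermat's rule applied to \eqref{eq:dualminproblem}. Finally $\hat u\in S_{P^*}$: write the optimality condition for \eqref{eq:dualminproblemdual}, namely $0\in\partial(g^*\circ-T^*)^*(\hat u)-K\big(\text{something in }\partial f^*\text{ at }-K\hat u\big)$-type relation; more directly, \eqref{eq:z_opt1} is exactly the Kuhn--Tucker system for the primal-dual pair $\big(\eqref{eq:dualminproblem},\eqref{eq:dualminproblemdual}\big)$ with $A=\partial f^*$, $B=\partial(g^*\circ-T^*)^{-1}$, $L=K^*$, so by the general equivalence recalled in the introduction (the passage around \eqref{e:priminc}--\eqref{e:dualopti}) any solution of the Kuhn--Tucker system projects to a solution of the dual optimization problem, giving $\hat u\in S_{P^*}$.

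The main obstacle is bookkeeping the qualification conditions: one must verify that each invocation of the subdifferential sum rule and of the composition rule $\partial(h\circ S)=S^*\,\partial h\,S$ is covered by exactly one of the listed $\sri$ hypotheses, and in particular that alternative \ref{prop:relacionproblemasduales3} genuinely suffices to perform the two-stage differentiation of $g^*\circ(-T^*)\circ(-K^*)$ without implying the single-stage condition \ref{prop:relacionproblemasduales2}. A secondary subtlety is making sure that $\dom(g^*\circ-T^*)$ is nonempty and that the standing assumption $\ran T^*\cap\dom g^*\neq\varnothing$ from Problem~\ref{prob:optipd} is what guarantees $(g^*\circ-T^*)^*\in\Gamma_0(\G)$ so that all conjugation and biconjugation steps are legitimate. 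None of this is deep, but it is the part where an error is most likely to slip in.
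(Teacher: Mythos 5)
Your plan matches the paper's proof essentially step for step: both reduce every case to the system $0\in\partial f^*(\hat x)-KT\hat y$, $0\in\partial g(\hat y)+T^*K^*\hat x$ via Fenchel inversion, use the qualification-free inclusion $-T\,\partial g^*(-T^*\cdot)\subset\partial(g^*\circ -T^*)$ in parts (1)--(2) and the chain-rule equality under $0\in\sri(\dom g^*-\ran T^*)$ in part (3), and then read off the $S_P\times S_D\times S_{P^*}$ memberships from the Kuhn--Tucker correspondence. The only caveats are cosmetic: the outer operator in your composition identity should be $-T$ rather than $-T^*$, and the step you justify by ``$\hat y$ realizes the infimum defining $(g^*\circ -T^*)^*(\hat u)$'' is more cleanly obtained from the one-sided subdifferential composition rule, which requires nothing beyond the standing assumption $\ran T^*\cap\dom g^*\neq\varnothing$.
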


\begin{proof}
	\ref{prop:relacionproblemasdualesa}:
	Let $\hat{x}\in\partial f(KT\hat{y})$ be such that 
	$ 0 \in  \partial g(\hat{y})+T^*K^*\hat{x}$. Hence, it follows from 
	\cite[Corollary~16.30]{1} that
	\begin{equation}
		\begin{cases}
			\label{e:auxexist}
			0 \in \partial f^* (\hat{x})-KT\hat{y}\\
			0 \in  \partial g(\hat{y})+T^*K^*\hat{x},
		\end{cases}
	\end{equation}
	and 
	\cite[Proposition~2.8(i)]{skew} implies $(\hat{y},\hat{x})\in 
	S_P\times 
	S_D$. By defining $\hat{u}=-T\hat{y}$, we obtain 
	$0 \in \partial f^* 
	(\hat{x})+K{\hat{u}}$. Moreover, $\ran T^*\cap\dom 
	g^*\ne\varnothing$ yields $g^*\circ(-T^*)\in\Gamma_0(\K)$ and 
	$-T(\partial g^*)(-T^*)\subset \partial(g^*\circ-T^*)$
	in view of \cite[Proposition~16.6(ii)]{1}.
	Hence, we deduce from 
	\cite[Corollary~16.30]{1} and \eqref{e:auxexist}
	that 
	\begin{align}
		-T^*K^* \hat{x} \in \partial g (\hat{y}) \quad &\Leftrightarrow \quad 
		\hat{y} \in \partial g^*(-T^*K^*\hat{x})\nonumber\\
		&\Rightarrow\quad  \hat{u}=-T\hat{y} \in  -T \partial 
		g^*(-T^*K^*\hat{x})\nonumber\\
		&\Rightarrow\quad  \hat{u} \in 
		\partial(g^*\circ-T^*)(K^*\hat{x})\label{eq:relacionproblemasduales12}\\
		&\Leftrightarrow\quad K^*\hat{x} \in  
		\partial(g^*\circ-T^*)^*(\hat{u})\nonumber\\
		&\Leftrightarrow\quad 0 \in  
		\partial(g^*\circ-T^*)^*(\hat{u})-K^*\hat{x}\label{eq:relacionproblemasduales13}.
	\end{align}
	Therefore, $(\hat{x},-T\hat{y})$ is a solution to 
	\eqref{eq:z_opt1}.
	
	\ref{prop:relacionproblemasduales1}: By 
	\cite[Theorem~16.3 \& Theorem~16.47(i)]{1}, $0 \in 
	\partial(g+f\circ 
	K
	T)(\hat{y})=\partial g 
	(\hat{y}) + T^*K^* \partial f( K T  \hat{y})$. The result follows from 
	\ref{prop:relacionproblemasdualesa}.
	
	\ref{prop:relacionproblemasdualesb}: Since, by taking 
	$\hat{y}\in  \partial g^*(-T^*K^* \hat{x})$ such that $0\in\partial 
	f^*(\hat{x})-KT\hat{y}$, we obtain 
	\eqref{e:auxexist}, the argument is analogous to that 
	in \ref{prop:relacionproblemasdualesa}.
	
	\ref{prop:relacionproblemasduales2}: By 
	\cite[Theorem~16.3 \& Theorem~16.47(i)]{1}, 
	$0 \in\partial(f^*+g^*\circ(-T^*K^*))(\hat{x})= \partial 
	f^*(\hat{x})-KT(\partial g^*)(-T^*K^* \hat{x})$.
	The result hence follows from \ref{prop:relacionproblemasdualesb}.
	
	\ref{prop:relacionproblemasduales3}: By 
	\cite[Theorem~16.3 \& Theorem~16.47(i)]{1}, 
	$0 \in  \partial f^*  (\hat{x})+K\partial (g^* \circ -T^*)(K^*\hat{x})$. 
	Moreover $0\in\sri (\dom g^*-\ran\, T^*)$ and \cite[Theorem~16.47]{1} 
	imply $0 \in \partial f^*  (\hat{x})-KT(\partial g^*) (-T^*K^*\hat{x})$.
	The result hence follows from \ref{prop:relacionproblemasdualesb}.
	
	\ref{prop:relacionproblemasduales3real}: It follows from the 
	second 
	inclusion of
	\eqref{eq:z_opt1} and  \cite[Theorem~16.47]{1}  
	that $\hat{u}\in\partial (g^*\circ(-T^*))(K^*\hat{x})=-T\partial 
	g^*(-T^*K^*\hat{x})$. Hence, there exists $\hat{y}\in\partial 
	g^*(-T^*K^*\hat{x})$ such that $\hat{u}=-T\hat{y}$, which yields
	$0\in \partial g(\hat{y})+T^*K^*\hat{x}$. Therefore, by combining 
	$\hat{u}=-T\hat{y}$ with the first inclusion of \eqref{eq:z_opt1},
	we deduce \eqref{e:auxexist} and the result follows from 
	\cite[Proposition~2.8(i)]{skew}.
	
\end{proof}
\begin{rem}
	In the context of 
	Proposition~\ref{prop:relacionproblemasduales}\eqref{prop:relacionproblemasduales3real}
	we obtain the existence of $\hat{y}\in S_P$ such that 
	$(\hat{x},\hat{y})$
	satisfies \eqref{e:auxexist}. If we additionally assume that 
	$\ran T$ is closed, the second equation in \eqref{e:auxexist}
	implies that
	$\hat{y}\in 
	\arg\min_{Ty=-\hat{u}}g(y)$.
	We thus recover the results in \cite[Lemma 2]{yin2016},
	obtained when $K=-\id$.
\end{rem}

\begin{algo}[Split-Alternating Direction Method of Multipliers 
	(SADMM)]
	\label{algo:43}
	In the context of 
	Problem~\ref{prob:optipd}, 
	let $\Sigma\colon \G\to\G$ and $\varUpsilon\colon\H\to\H$ be 
	strongly 
	monotone self-adjoint linear operators such that 
	$\Sigma^{-1}-K^*\varUpsilon K$ is monotone, let $p_0 \in \K$, 
	and let 
	$(q_0,x_0) \in 
	\H\times \H $. 
	Consider, the sequences defined by the recurrence
	\begin{equation}
		\label{eq:admmgeneralized1}
		(\forall n\in\N)\quad 
		\begin{array}{l}
			\left\lfloor
			\begin{array}{l}
				y_{n}=x_n+\varUpsilon (K T p_{n} - q_{n})\\[2mm]
				p_{n+1} \in \argm{p \in \K} \big(g (p)+\frac{1}{2}\|T p - 
				(Tp_n- \Sigma K^* y_{n})\|_{\Sigma^{-1}}^2\big)\\
				q_{n+1} = 
				\prox^{\varUpsilon}_{f}(\varUpsilon^{-1}x_n+KTp_{n+1})\\[2mm]
				x_{n+1}=x_n + \varUpsilon  (K T p_{n+1} - 
				q_{n+1}).
			\end{array}
			\right.
		\end{array}
	\end{equation}
\end{algo}
Observe that the existence and 
uniqueness of solutions to the convex optimization problem of the 
second step of \eqref{eq:admmgeneralized1} is not guaranteed 
without further hypotheses. 
The following result 
provides sufficient conditions for the existence
of solutions to the optimization problem in 
\eqref{eq:admmgeneralized1}, the 
equivalence between the sequences generated by 
Algorithm~\ref{alg:SDR} and Algorithm~\ref{algo:43}, 
and the weak convergence of SADMM.

\begin{teo} \label{teo:admmgeneralized} In the context of 
	Problem~\ref{prob:optipd}, suppose 
	that there exists a solution to \eqref{eq:z_opt1}, set 
	\begin{equation}
		\label{e:SADMMdefsops}
		A= \partial f^*,\quad 
		B= \partial (g^* \circ (- T^*)),\quad\text{and}\quad 
		L=K^*,
	\end{equation}
	and assume 
	that $0 \in \sri (\dom g^* - \ran T^*)$.
	Then, $(p_n)_{n\in\N}$ defined in 
	\eqref{eq:admmgeneralized1} exists
	and the following statements hold. 
	\begin{enumerate}
		\item
		\label{teo:admmgeneralizedi}(SDR reduces to SADMM)
		Let $(\tilde{x}_n)_{n \in \N}$, $(\tilde{u}_n)_{n \in 
			\N}$, and $(\tilde{v}_n)_{n \in \N}$ be the sequences generated 
		by Algorithm~\ref{alg:SDR} and set 
		\begin{align}\label{eq:pqSDRtoADMM}
			(\forall n \in \N) \quad \begin{cases} 
				\tilde{p}_{n+1} \in  T^{-1}(-\tilde{v}_n)\\
				\tilde{q}_{n+1}= 
				\varUpsilon^{-1}(\tilde{x}_n-\tilde{x}_{n+1}-\varUpsilon K 
				\tilde{v}_{n}).
			\end{cases}
		\end{align}
		Moreover, set $p_1\in\K$ such that
		$T p_1=T \tilde{p}_1$,  and
		$q_1=\tilde{q}_1$, 
		$x_1=\tilde{x}_1$. Then, sequences
		$(p_n)_{n\ge 1}$, 
		$(q_n)_{n\ge 1}$, and $(x_n)_{n\ge 1}$ generated 
		by Algorithm~\ref{algo:43} satisfy,
		for every $n \geq 1$,
		$T\tilde{p}_{n}=Tp_{n}$, $\tilde{q}_n=q_{n}$, and 
		$\tilde{x}_n=x_{n}$. 
		
		\item 
		\label{teo:admmgeneralizedii} 
		(SADMM reduces to SDR) Let $(p_n)_{n\ge 1}$, $(q_n)_{n\ge 
			1}$, 
		and $(x_n)_{n\ge 1}$ be sequences
		generated by Algorithm~\ref{algo:43} and define
		\begin{align}
			\label{eq:defproof}
			(\forall n \in \N)\quad  u_{n+1}=  \Sigma 
			K^*(x_{n+1}-x_n)-Tp_{n+1}.
		\end{align}
		Moreover, set $\tilde{x}_0=x_1$, 
		$\tilde{u}_0=u_1$, and let $(\tilde{x}_n)_{n \in \N}$ and 
		$(\tilde{u}_n)_{n \in 
			\N}$ be the sequences generated by 
		Algorithm~\ref{alg:SDR}.
		Then, for all $n\in\N$,
		$\tilde{x}_n=x_{n+1}$ and $\tilde{u}_n=u_{n+1}$.
		
		\item  
		\label{teo:admmgeneralizediii}
		Let
		$(p_n)_{n\in\N}$, $(q_n)_{n\in\N}$, 
		and $(x_n)_{n\in\N}$ be sequences
		generated by Algorithm~\ref{algo:43}.
		Then, the following hold:
		\begin{enumerate}
			\item 
			\label{teo:admmgeneralizediiia}
			There exists $(\hat{y},\hat{x},\hat{u} )\in S_P\times 
			S_D\times S_{P^*}$ such that 
			$(x_{n}, -T p_n,q_n) \weak (\hat{x},\hat{u},-K\hat{u})$ and 
			$\hat{u}= - T \hat{y}$. 
			\item 
			\label{teo:admmgeneralizediiib}
			Suppose that $\ran T^*=\K$. Then, there 
			exists 
			$\hat{y}\in S_P$ such that $p_n\weakly \hat{y}$.	
		\end{enumerate}
	\end{enumerate}
\end{teo}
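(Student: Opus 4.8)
The plan is to transport everything through Algorithm~\ref{alg:SDR}: first cast Problem~\ref{prob:optipd} into the format of Problem~\ref{prob2}, then establish an exact iterate-by-iterate dictionary between \eqref{eq:algoSDR} and \eqref{eq:admmgeneralized1}, and finally read off convergence of SADMM from Theorem~\ref{teo:GDR} and Proposition~\ref{prop:relacionproblemasduales}. With the choices $A=\partial f^*$, $B=\partial(g^*\circ(-T^*))$, $L=K^*$ of \eqref{e:SADMMdefsops} one has $L^*=K$ and $B^{-1}=\partial(g^*\circ(-T^*))^*$, so the set $\bm Z$ of \eqref{Z} is exactly the solution set of \eqref{eq:z_opt1}; since a solution exists, $\bm Z\ne\varnothing$, and since ``$\Sigma^{-1}-K^*\varUpsilon K$ monotone'' is equivalent, by Proposition~\ref{p:linearop}\eqref{p:linearop1}$\Leftrightarrow$\eqref{p:linearop4}, to ``$\varUpsilon^{-1}-L^*\Sigma L$ monotone'', Algorithm~\ref{alg:SDR} is well posed and Theorem~\ref{teo:GDR} applies to it. For the existence of $(p_n)_{n\in\N}$ I would, given the current state $(p_n,q_n,x_n)$, set $c=Tp_n-\Sigma K^*y_n$ and $r=-\Sigma^{-1}c$: then $s:=J_{\Sigma^{-1}B}(r)$ exists because $g^*\circ(-T^*)\in\Gamma_0(\G)$ (using $\ran T^*\cap\dom g^*\ne\varnothing$), and $v:=\Sigma(r-s)\in\partial(g^*\circ(-T^*))(s)$. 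The chain rule \cite[Theorem~16.47]{1}, available thanks to $0\in\sri(\dom g^*-\ran T^*)$, gives $\partial(g^*\circ(-T^*))=-T(\partial g^*)(-T^*)$, hence $v=-Tw$ for some $w$ with $w\in\partial g^*(-T^*s)$; rewriting the latter as $0\in\partial g(w)+T^*\Sigma^{-1}(Tw-c)$ shows that $w$ minimizes $g(p)+\frac12\|Tp-(Tp_n-\Sigma K^*y_n)\|_{\Sigma^{-1}}^2$, so the second step of \eqref{eq:admmgeneralized1} is solvable; moreover $Tp_{n+1}=Tw=-v$, and this $T$-value is the same for every minimizer, since the quadratic term is strictly convex in $Tp$.

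For parts~\eqref{teo:admmgeneralizedi} and~\eqref{teo:admmgeneralizedii} I would run a single induction establishing the invariants $Tp_n=T\tilde p_n$, $q_n=\tilde q_n$, $x_n=\tilde x_n$, with the index shift of part~\eqref{teo:admmgeneralizedii}; the base cases are exactly the prescribed initializations. The engine of the induction is the identification $\tilde v_n=\Sigma(\id-J_{\Sigma^{-1}B})(K^*\tilde x_n+\Sigma^{-1}\tilde u_n)=-Tp_{n+1}$. To get it I would rewrite $\tilde u_n=\Sigma K^*(\tilde x_n-\tilde x_{n-1})+\tilde v_{n-1}$ via \eqref{e:relcondat}, use $\tilde v_{n-1}=-T\tilde p_n=-Tp_n$ and $y_n=x_n+\varUpsilon(KTp_n-q_n)=2x_n-x_{n-1}$ (from the fourth step of the previous SADMM iteration, respectively from the initialization for the first index) to obtain $K^*\tilde x_n+\Sigma^{-1}\tilde u_n=K^*y_n-\Sigma^{-1}Tp_n=-\Sigma^{-1}(Tp_n-\Sigma K^*y_n)$, and then apply the displayed computation of the previous paragraph, which forces $Tp_{n+1}=-\tilde v_n=T\tilde p_{n+1}$. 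For the primal updates I would use the Moreau decomposition \eqref{e:Moreau_nonsme} together with $\prox^{\varUpsilon^{-1}}_{f^*}=J_{\varUpsilon\partial f^*}$: writing $\tilde z_n=\tilde x_n-\varUpsilon K\tilde v_n=x_n+\varUpsilon KTp_{n+1}=\varUpsilon(\varUpsilon^{-1}x_n+KTp_{n+1})$ and $\tilde x_{n+1}=J_{\varUpsilon\partial f^*}\tilde z_n$, one gets $\varUpsilon^{-1}(\tilde z_n-\tilde x_{n+1})=\prox^{\varUpsilon}_f(\varUpsilon^{-1}x_n+KTp_{n+1})=q_{n+1}$ and $\tilde x_{n+1}=\tilde z_n-\varUpsilon q_{n+1}=x_n+\varUpsilon(KTp_{n+1}-q_{n+1})=x_{n+1}$, which are precisely the third and fourth steps of \eqref{eq:admmgeneralized1}, while $\tilde q_{n+1}$ from \eqref{eq:pqSDRtoADMM} equals $\varUpsilon^{-1}(\tilde x_n-\tilde x_{n+1})-K\tilde v_n=KTp_{n+1}+\varUpsilon^{-1}(x_n-x_{n+1})=q_{n+1}$. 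Part~\eqref{teo:admmgeneralizedii} is the mirror image, with $\tilde x_0=x_1$, $\tilde u_0=u_1$, $u_1=\Sigma K^*(x_1-x_0)-Tp_1$ and $x_1-x_0=\varUpsilon(KTp_1-q_1)$ providing the consistent initialization. I expect this bookkeeping — the index shift, the separate treatment of the first index, and the identification $\tilde v_n=-Tp_{n+1}$, which rests both on the algebraic identity $K^*\tilde x_n+\Sigma^{-1}\tilde u_n=K^*y_n-\Sigma^{-1}Tp_n$ and on the chain rule $\partial(g^*\circ(-T^*))=-T(\partial g^*)(-T^*)$, where the qualification $0\in\sri(\dom g^*-\ran T^*)$ is used essentially — to be the main obstacle; the remaining identities are algebra together with \eqref{e:Moreau_nonsme}.

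For part~\eqref{teo:admmgeneralizediii} I would invoke part~\eqref{teo:admmgeneralizedii} to write $x_{n+1}=\tilde x_n$, $u_{n+1}=\tilde u_n$, where Theorem~\ref{teo:GDR} gives $(\tilde x_n,\tilde u_n)\weak(\hat x,\hat u)\in\bm Z$ and $\tilde x_{n+1}-\tilde x_n\to0$. Hence $x_n\weak\hat x$; from $u_{n+1}=\Sigma K^*(x_{n+1}-x_n)-Tp_{n+1}$ and $x_{n+1}-x_n\to0$ we get $-Tp_{n+1}\weak\hat u$; and from $q_{n+1}=KTp_{n+1}+\varUpsilon^{-1}(x_n-x_{n+1})$ (the fourth step of \eqref{eq:admmgeneralized1} rewritten) we get $q_{n+1}\weak-K\hat u$ — here it is important to use this affine expression and not the $\prox^{\varUpsilon}_f$ formula, since the proximity operator is not weakly continuous. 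As $(\hat x,\hat u)$ solves \eqref{eq:z_opt1} and $0\in\sri(\dom g^*-\ran T^*)$, Proposition~\ref{prop:relacionproblemasduales}\eqref{prop:relacionproblemasduales3real} yields $(\hat x,\hat u)\in S_D\times S_{P^*}$ and some $\hat y\in S_P$ with $\hat u=-T\hat y$, which is \eqref{teo:admmgeneralizediiia}. For \eqref{teo:admmgeneralizediiib}, $\ran T^*=\K$ makes $T$ bounded below (\cite[Fact~2.26]{1}), so $(p_n)$ is bounded because $(Tp_n)$ converges weakly, and every weak cluster point $\bar p$ satisfies $T\bar p=-\hat u=T\hat y$ by weak-to-weak continuity of $T$; injectivity of $T$ gives $\bar p=\hat y$, whence $p_n\weak\hat y\in S_P$.
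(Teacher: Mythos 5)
Your proposal is correct and follows essentially the same route as the paper: the key identity $\Sigma(\id-J_{\Sigma^{-1}B})=-TS$ linking the resolvent of $B=\partial(g^*\circ(-T^*))$ to the partial minimization in $p$ (via the chain rule under $0\in\sri(\dom g^*-\ran T^*)$), the iterate dictionary through the Moreau decomposition \eqref{e:Moreau_nonsme}, and convergence via Theorem~\ref{teo:GDR} combined with Proposition~\ref{prop:relacionproblemasduales}\eqref{prop:relacionproblemasduales3real}. The only (harmless) deviation is in part~\eqref{teo:admmgeneralizediiib}, where you argue via boundedness of $(p_n)$ and uniqueness of weak cluster points instead of the paper's direct pairing $\scal{T^*w}{p_n-\hat y}=\scal{w}{Tp_n-T\hat y}\to 0$.
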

\begin{proof} 
	Note that $g^*\circ -T^*\in\Gamma_0(\G)$, that 
	\cite[Corollary~16.53]{1} yields $B=-T\circ (\partial g^*)\circ -T^*$, 
	and that 
	$J_{\Sigma^{-1}B}=(\id-\Sigma^{-1}T(\partial g^*)(-T^*))^{-1}$.
	Therefore, it 
	follows from \cite[Corollary~16.30]{1} that
	\begin{align}
		(\forall (u,y)\in\G^2)\quad 
		y=J_{\Sigma^{-1}B}u\quad 
		&\Leftrightarrow\quad  (u-y)\in-\Sigma^{-1}T
		\partial g^*(-T^* y)\nonumber\\
		&\Leftrightarrow\quad(\exists p\in\K)\:\: 
		\begin{cases}
			y=u+\Sigma^{-1}Tp\\
			p\in \partial g^*(-T^* y)
		\end{cases}\nonumber\\
		&\Leftrightarrow\quad(\exists p\in\K)\:\: 
		\begin{cases}
			y=u+\Sigma^{-1}Tp\\
			0\in \partial g(p)+T^* y
		\end{cases}\nonumber\\
		&\Leftrightarrow\quad(\exists p\in\K)\:\: 
		\begin{cases}
			y=u+\Sigma^{-1}Tp\\
			p\in S(u),
		\end{cases}
	\end{align}
	where $S\colon u\mapsto \arg\min 
	(g+\frac{1}{2}\|T\cdot+\Sigma 
	u\|_{\Sigma^{-1}}^2)$ and last equivalence follows from 
	\cite[Theorem~16.3]{1} and simple gradient computations. We 
	conclude $\dom S=\G$, 
	$\prox_{g^*\circ -T^*}^{\Sigma}=\id+\Sigma^{-1}TS$, and, therefore,
	\begin{equation}
		\label{e:proxcalc}
		\Sigma(\id-J_{\Sigma^{-1}B})=\Sigma\big(\id-\prox_{g^*\circ 
			-T^*}^{\Sigma}\big)=-TS.
	\end{equation}
	Thus, the optimization problem in \eqref{eq:admmgeneralized1} is 
	equivalent to
	\begin{equation}
		\label{e:seqpadmm}
		(\forall n\in\N)\quad p_{n+1}\in S\big(K^*(x_n+\varUpsilon 
		(KTp_n-q_n))-\Sigma^{-1}Tp_n\big)
	\end{equation}
	and, hence, sequence 
	$(p_n)_{n\in\N}$ exists.
	
	\ref{teo:admmgeneralizedi}: It follows from \eqref{eq:pqSDRtoADMM}, 
	\eqref{eq:algoSDR}, \eqref{e:proxcalc}, and \eqref{e:SADMMdefsops} 
	that, for every $n\in\N$,
	$T\tilde{p}_{n+1}=-\tilde{v}_n=TS(K^*\tilde{x}_n+\Sigma^{-1}
	\tilde{u}_n)$ and, thus,
	$\tilde{u}_{n+1}=\Sigma K^*\varUpsilon (KT 
	\tilde{p}_{n+1}-\tilde{q}_{n+1})-T\tilde{p}_{n+1}$.
	Therefore, we have
	\begin{equation}
		(\forall n\ge 1)\quad T\tilde{p}_{n+1}=TS(K^*(\tilde{x}_n+\varUpsilon 
		(KT\tilde{p}_n-\tilde{q}_n))-\Sigma^{-1}T\tilde{p}_n).
	\end{equation}
	In addition, from \eqref{eq:algoSDR}, \eqref{e:SADMMdefsops}, and 
	\eqref{e:Moreau_nonsme} 
	we have, for every $n\in\N$, $\tilde{x}_{n+1}=\tilde{x}_n+\varUpsilon 
	KT\tilde{p}_{n+1}-\varUpsilon\prox_f^{\varUpsilon}(
	\varUpsilon^{-1}\tilde{x}_n+KT\tilde{p}_{n+1})$ and, thus, 
	\eqref{eq:pqSDRtoADMM} yields 
	$\tilde{q}_{n+1}=\prox_f^{\varUpsilon}(
	\varUpsilon^{-1}\tilde{x}_n+KT\tilde{p}_{n+1})$.
	Altogether, we deduce
	\begin{equation}
		\label{eq:sdrtoadmm}
		(\forall n\ge 1)\quad 
		\begin{array}{l}
			\left\lfloor
			\begin{array}{l}
				T\tilde{p}_{n+1} =TS(K^*(\tilde{x}_n+\varUpsilon 
				(KT\tilde{p}_n-\tilde{q}_n))-\Sigma^{-1}T\tilde{p}_n)\\[1mm]
				\tilde{q}_{n+1} = 
				\prox^{\varUpsilon}_{f}(\varUpsilon^{-1}\tilde{x}_n+KT\tilde{p}_{n+1})\\[1mm]
				\tilde{x}_{n+1}=\tilde{x}_n +\varUpsilon  (K T \tilde{p}_{n+1} - 
				\tilde{q}_{n+1})
			\end{array}
			\right.
		\end{array}
	\end{equation}
	and the result follows from \eqref{e:seqpadmm}, $x_1=\tilde{x}_1$, 
	$q_1=\tilde{q}_1$, and $Tp_{1}=T\tilde{p}_1$.
	
	\ref{teo:admmgeneralizedii}:  
	Define
	\begin{equation}
		\label{e:defvz}
		(\forall n\in\N)\quad
		\begin{cases}
			v_n = -T p_{n+1}\\
			z_n=x_n+\varUpsilon KT p_{n+1}
		\end{cases} 
	\end{equation}
	and fix $n\ge 1$.
	Hence, we have
	\begin{align}\label{eq:qn}
		q_{n+1} \overset{\eqref{eq:admmgeneralized1}}{=} 
		\prox^{\varUpsilon}_{f}&(\varUpsilon^{-1}x_n+KTp_{n+1}) 
		\nonumber\\
		& \Leftrightarrow\quad x_n+\varUpsilon ( KTp_{n+1}-q_{n+1})
		\overset{\eqref{e:Moreau_nonsme}}{=} 
		\prox^{\varUpsilon^{-1}}_{f^*}(x_n+\varUpsilon KT 
		p_{n+1})\nonumber\\
		& \Leftrightarrow\quad x_{n+1}
		\overset{\eqref{e:SADMMdefsops}}{=}  
		J_{\varUpsilon A}z_n.
	\end{align}
	Moreover, from \eqref{e:seqpadmm}, \eqref{eq:defproof},
	and \eqref{eq:admmgeneralized1},
	we obtain
	$p_{n+1}\in S(K^*x_n+\Sigma^{-1}u_n)$. 
	Hence,   \eqref{e:proxcalc}, \eqref{e:SADMMdefsops},
	and \eqref{e:defvz} yield
	$v_n=\Sigma (\id -J_{\Sigma^{-1}B})(Lx_n+\Sigma^{-1}u_n)$.
	Altogether, from \eqref{eq:defproof} we recover the recurrence in 
	Algorithm~\ref{alg:SDR} 
	shifted by one iteration and, by setting $\tilde{x}_0=x_1 $ and 
	$\tilde{u}_0=u_1$ the result follows.
	
	\ref{teo:admmgeneralizediiia}.  Set 
	$(u_{n})_{n\ge 1}$ 
	via \eqref{eq:defproof} and define, for every $n\in\N$,
	$\tilde{x}_n=x_{n+1}$ and $\tilde{u}_{n}=u_{n+1}$. 
	Then, \ref{teo:admmgeneralizedii}
	asserts that
	$(\tilde{x}_n)_{n \in \N}$ and $(\tilde{u}_n)_{n \in 
		\N}$ are the sequences generated by 
	Algorithm~\ref{alg:SDR} with the operators defined in 
	\eqref{e:SADMMdefsops}.
	Note that $A=\partial g^*$ and $B=\partial (g^* \circ (- T^*))$ 
	are maximally monotone \cite[Theorem 20.25]{1}
	and that the set 
	$\bm{Z}$ defined in \eqref{Z} is the primal-dual solution set to 
	the inclusion 
	\eqref{eq:z_opt1}, which is non-empty by hypothesis. Then, by 
	Theorem~\ref{teo:GDR}\eqref{teo:GDR2}, there exists some 
	$(\hat{x},\hat{u})$ solution to \eqref{eq:z_opt1} such that 
	$(\tilde{x}_{n},\tilde{u}_n)=(x_{n+1}, u_{n+1}) \weak (\hat{x},\hat{u})$. 
	Moreover, Theorem~\ref{teo:GDR}\eqref{teo:GDR1} yields 
	\begin{equation}
		\label{e:to0}
		x_{n+1}-x_{n} \to 0, 
	\end{equation}
	and, thus, \eqref{eq:defproof} yields 
	$-Tp_{n+1}=u_{n+1}-\Sigma 
	K^*(x_{n+1}-x_n) \weak 
	\hat{u}$. 
	Hence, since
	\eqref{eq:admmgeneralized1} yields, for every $n \in \N$, 
	$q_{n+1}=\varUpsilon^{-1}(x_n-x_{n+1}) + KT 
	p_{n+1}$, the weak continuity of $K$ and \eqref{e:to0}
	imply $q_n \weak -K\hat{u}$. 
	We conclude that $(x_n, -T p_n, q_n) \weak 
	(\hat{x},\hat{u},-K\hat{u})$. The result follows from
	Proposition~\ref{prop:relacionproblemasduales}\eqref{prop:relacionproblemasduales3real}.
	
	\ref{teo:admmgeneralizediiib}. By \ref{teo:admmgeneralizediiia}, 
	there exists $\hat{y} \in S_P$ such that $Tp_n \weak T\hat{y}$. 
	Thus, 
	for every $z\in\K$, there exists $w\in\G$ such that $z=T^*w$, 
	which yields $\scal{z}{p_n-\hat{y}}=\scal{w}{Tp_n-T\hat{y}}\to 0$
	and, hence, $p_n \weak \hat{y}$. This concludes the proof.
\end{proof}
\begin{rem}
	\label{r:remSADMM}
	\begin{enumerate}
		\item\label{r:remSADMM1}
		Note that the existence of a sequence $(p_n)_{n\in\N}$
		is guaranteed without any further assumption than $0 \in \sri (\dom g^* 
		- \ran T^*)$. 
		This result is weaker than strong monotonicity or full range 
		assumptions
		made in \cite{BrediesSun17,gabay83} and improves 
		\cite{ecksteinadmm2015}, in which this existence
		is assumed. Note that, even if there could exist a continuum of 
		solutions to the optimization problem in \eqref{eq:admmgeneralized1},
		the image through $T$ is unique, in view of \eqref{e:seqpadmm} and
		\eqref{e:proxcalc}.
		
		\item In the case when $K=\id$, 
		Theorem~\ref{teo:admmgeneralized}\eqref{teo:admmgeneralizedi}
		recovers the reduction of 
		DRS when $A=\partial f^*$ and $B=\partial 
		(g^*\circ 
		(-T^*))$ to ADMM and the convergence is guaranteed under weaker 
		conditions than the strong monotonicity and full range assumptions 
		used in \cite[Section~5.1]{gabay83}.
		Under the assumption $\ker T=\{0\}$, this result is obtained in 
		\cite[Theorem 3.2]{moursi2019}.
		
		\item Suppose that $K=\id$. Observe 
		that, given the sequence $(\tilde{v}_n)_{n\in\N}$ 
		generated by SDR, 
		Theorem~\ref{teo:admmgeneralized}\eqref{teo:admmgeneralizedii}
		asserts that any sequence 
		$(p_n)_{n\in\N}$ satisfying $-Tp_{n+1}=\tilde{v}_n$ allows the 
		convergence of ADMM and its equivalence with DRS applied to the 
		dual problem \eqref{eq:dualminproblem}. The equivalence of ADMM
		with respect to DRS applied to the primal \eqref{eq:primalminproblem}
		is studied in \cite{patrinos2020,yin2016}.
		
		\item In the case when $K=\id$, 
		Theorem~\ref{teo:admmgeneralized}\eqref{teo:admmgeneralizedii}
		provides the reduction of ADMM to DRS. Note that
		this reduction does not need any further assumption 
		on $T$ than $\ran T^*\cap\dom g^*\ne\varnothing$,
		which is weaker than
		$\ker T=\{0\}$, used in \cite[Theorem~3.2]{moursi2019}
		(see also \cite[Appendix A]{russell2016} and 
		\cite[Proposition 3.43]{ecksteinthesis} in finite dimensions). 
		
		\item 
		Theorem~\ref{teo:admmgeneralized} provides the weak 
		convergence of shadow sequences, improving 
		\cite[Theorem 5.1]{gabay83} in the optimization setting.
		In addition, 
		Theorem~\ref{teo:admmgeneralized} 
		recovers the result in \cite[Proposition 3.42]{ecksteinthesis} when 
		$K$ has full column rank in the finite dimensional setting. 
	\end{enumerate}
\end{rem}

The following result allows to deal with more general formulations 
involving two linear operators.

\begin{cor}
	\label{c:admmgen}
	Let $\H$, $\G$, $\J$, and $\K$ be real Hilbert spaces, let 
	$g\in\Gamma_0( \K) $, let $h\in\Gamma_0(\J)$, and let $T: \K 
	\to \G$, $J\colon \J\to\H$, and $K:\G \to \H$ be 
	non-zero bounded linear operators such that $0 \in \sri (\dom g^* - 
	\ran T^*)$, $0 \in \sri (\dom h^* - \ran J^*)$, and 
	$0\in\sri(KT\dom 
	g+J\dom h)$. Consider the 
	convex optimization problem
	\begin{align}
		\label{e:optim2lin}
		\min_{y\in\K}\min_{v\in\J}\quad &g(y)+h(v)\nonumber\\
		\text{s.t.}\quad &KTy+Jv=0,
	\end{align}
	under the assumption that solutions exist.
	In addition, let $\Sigma\colon \G\to\G$ and $\varUpsilon\colon\H\to\H$ 
	be strongly 
	monotone self-adjoint linear operators such that 
	$\Sigma^{-1}-K^*\varUpsilon K$ is monotone, let $p_0 \in \K$, let 
	$v_0\in \J$, let
	$x_0 \in \H$, and consider the routine: 
	\begin{equation}
		\label{eq:admmgeneralized2}
		(\forall n\in\N)\quad 
		\begin{array}{l}
			\left\lfloor
			\begin{array}{l}
				y_{n}=x_n+\varUpsilon (K T p_{n} +Jv_{n})\\[2mm]
				p_{n+1} \in \argm{p \in \K} \big(g (p)+\frac{1}{2}\|T p - 
				(Tp_n- \Sigma K^* y_{n})\|_{\Sigma^{-1}}^2\big)\\
				v_{n+1} \in  
				\argm{v\in \mathcal{J}} \big(h(v)+\frac{1}{2}\|Jv 
				+KTp_{n+1}+\varUpsilon^{-1}x_n\|^2_{\varUpsilon}\big)\\[2mm]
				x_{n+1}=x_n + \varUpsilon  (K T p_{n+1} + 
				Jv_{n+1}).
			\end{array}
			\right.
		\end{array}
	\end{equation}
	Then, there exists $(\hat{y},\hat{v})$ solution to \eqref{e:optim2lin} 
	such that the following hold:
	\begin{enumerate}
		\item 
		\label{c:admmgen1}
		$Tp_n\weakly T\hat{y}$ and $Jv_n\weakly J\hat{v}$.
		\item 
		\label{c:admmgen2}
		Suppose that $\ran T^*=\K$. Then, $p_n\weakly\hat{y}$.
		\item 
		\label{c:admmgen3}
		Suppose that $\ran J^*=\J$. Then, $v_n\weakly \hat{v}$.
	\end{enumerate}
\end{cor}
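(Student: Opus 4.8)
The plan is to recognize \eqref{e:optim2lin} as an instance of Problem~\ref{prob:optipd} and then apply Theorem~\ref{teo:admmgeneralized}. Introduce $f\colon\H\to\RX$ as the marginal function $f\colon w\mapsto\inf\{h(v)\mid Jv=-w\}$; equivalently, $f$ is the function whose Fenchel conjugate is $f^*=h^*\circ(-J^*)$. Since $0\in\sri(\dom h^*-\ran J^*)$ forces in particular $\dom h^*\cap\ran J^*\ne\varnothing$, the function $f^*$ lies in $\Gamma_0(\G)$, hence $f\in\Gamma_0(\H)$; moreover the same qualification condition together with \cite[Theorem~16.47 \& Corollary~16.53]{1} gives $\partial f^*=-J\circ(\partial h^*)\circ(-J^*)$, so by \cite[Corollary~16.30]{1} one has $\partial f=(\partial f^*)^{-1}$, that is, $x\in\partial f(w)$ if and only if there is $v$ with $Jv=-w$ and $-J^*x\in\partial h(v)$. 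A short computation shows that the optimal value of \eqref{e:optim2lin} equals that of $\min_{y\in\K}(g(y)+f(KTy))$, which is precisely \eqref{eq:primalminproblem} for this $f$, and that $\hat y$ belongs to $S_P$ exactly when $\hat y$ is the first component of a solution of \eqref{e:optim2lin}.

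Next I would check that the recurrence \eqref{eq:admmgeneralized2} coincides with Algorithm~\ref{algo:43} applied to this $f$ under the identification $q_n=-Jv_n$ (setting $q_0:=-Jv_0$). The updates of $y_n$, $p_{n+1}$ and $x_{n+1}$ agree verbatim with those of \eqref{eq:admmgeneralized1}, so it remains to prove that $\prox^{\varUpsilon}_{f}(\varUpsilon^{-1}x_n+KTp_{n+1})=-Jv_{n+1}$. Writing $c_n=\varUpsilon^{-1}x_n+KTp_{n+1}$, the optimality condition of the $v$-subproblem, $-J^*\varUpsilon(Jv_{n+1}+c_n)\in\partial h(v_{n+1})$, is, via the formula for $\partial f$, equivalent to $\varUpsilon\big(c_n-(-Jv_{n+1})\big)\in\partial f(-Jv_{n+1})$, i.e. $-Jv_{n+1}=\prox^{\varUpsilon}_{f}(c_n)$. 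Since $\prox^{\varUpsilon}_{f}$ is single valued and $\partial f\big(\prox^{\varUpsilon}_{f}(c_n)\big)\ne\varnothing$, the $\partial f$-formula also produces a minimizer $v_{n+1}$ of the $v$-subproblem, so that the sequences $(p_n)_{n\in\N}$, $(v_n)_{n\in\N}$, $(x_n)_{n\in\N}$ are well defined, every minimizer of the $v$-step shares the same image $-q_{n+1}$ under $J$ (as in Remark~\ref{r:remSADMM}), and $q_n=-Jv_n$ for all $n$.

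It then suffices to verify the hypotheses of Theorem~\ref{teo:admmgeneralized} for $A=\partial f^*$, $B=\partial(g^*\circ(-T^*))$ and $L=K^*$. The condition $0\in\sri(\dom g^*-\ran T^*)$ is assumed (in particular $\ran T^*\cap\dom g^*\ne\varnothing$, so Problem~\ref{prob:optipd} is well posed), and a solution to \eqref{eq:z_opt1} exists: a solution of \eqref{e:optim2lin} exists by hypothesis, yielding some $\hat y\in S_P$, and Proposition~\ref{prop:relacionproblemasduales}(1) applies under the qualification $0\in\sri(\dom f-KT\dom g)$, which -- using $\dom f=-J\dom h$ and $\sri(-C)=-\sri(C)$ -- reduces to the assumed $0\in\sri(KT\dom g+J\dom h)$. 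Theorem~\ref{teo:admmgeneralized}\eqref{teo:admmgeneralizediii} then gives $(x_n,-Tp_n,q_n)\weak(\hat x,\hat u,-K\hat u)$ with $\hat u=-T\hat y$ and $\hat y\in S_P$, and $p_n\weak\hat y$ whenever $\ran T^*=\K$.

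Finally I would translate these statements. From $-Tp_n\weak\hat u$ we obtain $Tp_n\weak T\hat y$, and from $q_n\weak-K\hat u=KT\hat y$ and $q_n=-Jv_n$ we obtain $Jv_n\weak-KT\hat y$. To identify the matching $\hat v$: since $\hat y\in S_P$, the sum rule (again under $0\in\sri(\dom f-KT\dom g)$) yields $\partial f(KT\hat y)\ne\varnothing$, so the $\partial f$-formula produces $\hat v$ with $J\hat v=-KT\hat y$, and a Fenchel--Young identity shows $h(\hat v)=f(KT\hat y)$; hence $(\hat y,\hat v)$ solves \eqref{e:optim2lin} and $Jv_n\weak J\hat v$, which is \eqref{c:admmgen1}. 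Statement \eqref{c:admmgen2} is immediate from the last assertion of Theorem~\ref{teo:admmgeneralized}. For \eqref{c:admmgen3}, if $\ran J^*=\J$ then for every $z\in\J$ there is $w\in\H$ with $z=J^*w$, so $\scal{z}{v_n-\hat v}=\scal{w}{Jv_n-J\hat v}\to0$, i.e. $v_n\weak\hat v$. I expect the main obstacle to be the analysis of the marginal function $f$: showing $f\in\Gamma_0(\H)$, establishing the exact formula for $\partial f$, and -- most delicately -- the attainment of the infimum defining $f$, which is what makes the $v$-step well posed and permits recovering $\hat v$; all of this rests on the qualification $0\in\sri(\dom h^*-\ran J^*)$ and a careful use of \cite[Theorem~16.47, Corollary~16.30 \& Corollary~16.53]{1}. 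The remainder is routine bookkeeping built on Theorem~\ref{teo:admmgeneralized}.
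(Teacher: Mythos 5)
Your proposal is correct and follows essentially the same route as the paper: both identify \eqref{e:optim2lin} with Problem~\ref{prob:optipd} via the marginal function $f=(-J)\rhd h$ (proper, lsc, convex, and exact thanks to $0\in\sri(\dom h^*-\ran J^*)$), both show that \eqref{eq:admmgeneralized2} is Algorithm~\ref{algo:43} under the identification $q_n=-Jv_n$, and both conclude by invoking Proposition~\ref{prop:relacionproblemasduales} and Theorem~\ref{teo:admmgeneralized}\eqref{teo:admmgeneralizediii}. The only cosmetic difference is that the paper carries out the bookkeeping for $f$ via \cite[Corollary~15.28]{1} and the Moreau decomposition \eqref{e:Moreau_nonsme}, whereas you work through the subdifferential formula for $\partial f^*$ and a Fenchel--Young identity; the substance is identical.
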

\begin{proof}
	Note that, by setting $f=(-J)\rhd h\colon q\mapsto 
	\min_{Jv=-q}h(v)$,
	\eqref{e:optim2lin} can be 
	equivalently 
	written as
	\begin{equation}
		\label{e:reductP}
		\min_{y\in\K}\: \left(g(y)+\min_{-Jv=KTy} h(v)\right)\equiv 
		\min_{y\in\K}\: \big(g(y)+f(KTy)\big).
	\end{equation} 
	Since $0\in \sri(\dom 
	h^*-\ran J^*)$, \cite[Corollary~15.28]{1} yields 
	$f=(h^*\circ -J^*)^*\in \Gamma_0(\H)$.
	Hence, the problem in \eqref{e:optim2lin} is a particular 
	instance of Problem~\ref{prob:optipd} and  
	it follows from \eqref{eq:admmgeneralized1}, 
	\eqref{e:Moreau_nonsme}, and an argument analogous to that in 
	\eqref{e:proxcalc} that
	\begin{equation}
		(\forall n\in\N)\quad q_{n+1}
		=\varUpsilon^{-1}\,(\id-  
		\prox^{\varUpsilon^{-1}}_{h^*\circ -J^*})\, 
		(x_n+\varUpsilon KTp_{n+1})
		=-Jv_{n+1},
	\end{equation}
	where $v_{n+1}$ is defined in \eqref{eq:admmgeneralized2}.
	Hence, \eqref{eq:admmgeneralized2} is a particular instance of 
	Algorithm~\ref{algo:43}. Moreover,  \cite[Proposition~12.36(i)]{1} 
	yields 
	$0\in\sri(KT\dom g+J\dom h)=\sri (KT\dom g-\dom f)$
	and 
	Proposition~\ref{prop:relacionproblemasduales}\eqref{prop:relacionproblemasduales1}
	implies the existence of a solution to \eqref{eq:z_opt1}.
	Altogether,
	Theorem~\ref{teo:admmgeneralized}\eqref{teo:admmgeneralizediii}
	asserts that there exists 
	$(\hat{y},\hat{x})\in S_P\times S_D$ 
	such that $(x_{n}, -T p_n,q_n) \weak (\hat{x},-T\hat{y},KT\hat{y})$ 
	and $\hat{u}= - T \hat{y}\in S_{P^*}$. Moreover, since
	$0\in\sri(\dom h^*-\ran J^*)$, it follows from \eqref{e:reductP}
	and \cite[Corollary~15.28(i)]{1} that there exists 
	$\hat{v}\in\J$ such that $(\hat{y},\hat{v})$ is 
	a solution to \eqref{e:optim2lin}.
	In particular, $Tp_n\weakly 
	T\hat{y}$ and
	$q_n=-Jv_n\weakly KT\hat{y}=-J\hat{v}$, which yields 
	\ref{c:admmgen1}. Assertions \ref{c:admmgen2} and 
	\ref{c:admmgen3}
	follow analogously as in the proof of 
	Theorem~\ref{teo:admmgeneralized}\eqref{teo:admmgeneralizediiib}.
\end{proof}

\begin{rem}
	\label{rem:ADMM}
	\begin{enumerate}
		\item
		\label{r:remSADMMShefi} 
		In the context of Corollary~\ref{c:admmgen}, 
		let $U=\varUpsilon^{-1}-K\Sigma K^*$ and 
		$V=\Sigma^{-1}-K^*\varUpsilon K$, which are monotone in view of 
		Proposition~\ref{p:linearop}. Then, Algorithm~\ref{algo:43} can be 
		written equivalently as
		\begin{equation}
			\label{e:BrediesSun}
			\begin{array}{l}
				\left\lfloor
				\begin{array}{l}
					p_{n+1} \in \argm{p \in \K} \Big(g (p)+\frac{1}{2}\|KT p + 
					Jv_n+\varUpsilon^{-1}x_n\|_{\varUpsilon}^2
					+\frac{1}{2}\|p-p_n\|^2_{T^*VT}\Big)\\
					v_{n+1} \in \argm{v\in\J}\Big(h(v)+\frac{1}{2}\|KT p_{n+1} + 
					Jv+\varUpsilon^{-1}x_n\|_{\varUpsilon}^2\Big)\\[2mm]
					x_{n+1}=x_n + \varUpsilon  (K T p_{n+1} +
					Jv_{n+1}),
				\end{array}
				\right.
			\end{array}
		\end{equation}
		which is a non-standard version of the \textit{preconditioned ADMM} 
		(PADMM) \cite{BrediesSun17} without proximal quadratic term in the 
		second optimization problem of \eqref{e:BrediesSun}.
		It considers the \textit{augmented Lagrangian with 
			non-standard 
			metric}
		\begin{equation}
			\label{e:PaugmLag}
			\mathcal{L}_{\varUpsilon}\colon(p,v,x)
			\mapsto 
			g(p)+h(v)+\scal{x}{KTp+Jv}+\frac{1}{2}\|KTp+Jv\|_{\varUpsilon}^2, 
		\end{equation}
		which generalizes the classical augmented Lagrangian 
		$\mathcal{L}_{r\id}$ for some $r>0$. 
		Without the strong 
		monotonicity assumptions used in \cite[Theorem~2.1 \& 
		Theorem~3.1]{BrediesSun17},
		the sequences of algorithm \eqref{eq:admmgeneralized2} are
		well defined and 
		Corollary~\ref{c:admmgen} provides weak convergence.
		Moreover, in the case when 
		$J=-\id$ and $\varUpsilon=r\id$, Corollary~\ref{c:admmgen} 
		ensures convergence under weaker assumptions
		than \cite[Algorithm~2]{shefi} (see also 
		\cite{bot2018} for 
		a variant involving a differentiable convex function). 
		In \cite{Osher11}, a non-standard metric is included only in the 
		multiplier update step of \cite[Algorithm~2]{shefi}, but the convergence 
		of the iterates is not obtained.

		\item In the case when $K=\id$ and $\Sigma = 
		\varUpsilon^{-1}$, 
		the algorithm in \eqref{e:BrediesSun}
		reduces to the ADMM algorithm with the
		augmented Lagrangian with non-standard metric 
		\eqref{e:PaugmLag}, which,  given 
		$(q_0,x_0) 
		\in \H\times \H $, iterates
		\begin{equation}\label{eq:algadmm}
			(\forall n \in \N) \quad 
			\begin{array}{l}
				\left\lfloor
				\begin{array}{l}
					p_{n+1} \in \argm{p \in \K} \big(g (p)+
					\frac{1}{2}\|T p 
					\,+Jv_n+\varUpsilon^{-1}x_{n}\|_{\varUpsilon}^2\big) \\
					v_{n+1} \in \argm{v \in \J} \big(h(v)+
					\frac{1}{2}\|T p_{n+1} 
					\,+Jv+\varUpsilon^{-1}x_{n}\|_{\varUpsilon}^2\big)\\[2mm]
					x_{n+1}=x_n + \varUpsilon(T p_{n+1} +J v_{n+1}).\\
				\end{array}
				\right.
			\end{array}
		\end{equation}
		In the particular case when $\varUpsilon=\tau\id$, it reduces to 
		ADMM \cite{ Boyd2011} and 
		\cite{ecksteinadmm2015,gabay83,gabaymercier,GlowinskyMorrocco75}
		when $J=-\id$.

		\item
		\label{rem:ADMMii}
		As in Remark~\ref{r:remSADMM}\eqref{r:remSADMM1}, 
		sequences 
		$(Tp_n)_{n\in\N}$ and $(Jv_n)_{n\in\N}$ in \eqref{eq:algadmm}
		are unique even if
		the solutions to the optimization problems in 
		\eqref{eq:algadmm} are not unique.
		The uniqueness of $(p_n)_{n\in\N}$ (resp. $(v_n)_{n\in\N}$) is 
		guaranteed, e.g., if $g$ (resp. $h$) is strictly convex or if $\ran 
		T^*=\K$ (resp. $\ran J^*=\J$).
	\end{enumerate}
	
\end{rem}
The following corollary is a direct consequence of 
Theorem~\ref{teo:admmgeneralized} when $T=\id$.

\begin{cor}\label{coro:Splitadmm} In the context of 
	Problem~\ref{prob:optipd}, suppose that $T=\id$ 
	and that there exists a solution to \eqref{eq:z_opt1}. Let 
	$\Sigma\colon \G\to\G$ and $\varUpsilon\colon\H\to\H$ 
	be strongly 
	monotone self-adjoint linear operators such that 
	$\Sigma^{-1}-K^*\varUpsilon K$ is monotone, 
	let $p_0 \in \K  $, let 
	$(q_0,x_0) \in \H\times \H $, and
	consider the sequences $(p_n)_{n\in\N}$ and $(x_n)_{n\in\N}$ 
	generated by the recurrence
	\begin{equation}\label{eq:admmgeneralized_particular}
		(\forall n \in \N)\quad  
		\begin{array}{l}
			\left\lfloor
			\begin{array}{l}
				y_{n}=x_n+\varUpsilon (Kp_n-q_n)\\[1mm]
				p_{n+1} =\prox^{\Sigma^{-1}}_{g}\big(p_n-\Sigma
				K^*y_{n}\big)\\[1mm]
				q_{n+1} = \prox^{\varUpsilon}_{f } (\varUpsilon^{-1}x_n + 
				K p_{n+1} ) \\[1mm]
				x_{n+1}=x_n + \varUpsilon(K  p_{n+1} - q_{n+1}).
			\end{array}
			\right.
		\end{array}
	\end{equation}
	Then, 
	there exists $(\hat{y},\hat{x} )\in S_P\times S_{D}$ such that 
	$(p_n, x_n) \weak (\hat{y},\hat{x})$. 
\end{cor}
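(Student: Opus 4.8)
The plan is to identify the recurrence \eqref{eq:admmgeneralized_particular} as the special case $T=\id$ of Algorithm~\ref{algo:43} and then to invoke Theorem~\ref{teo:admmgeneralized}. First I would observe that when $T=\id$ we have $\K=\G$ and $\ran T^*=\G$, so the qualification condition $0\in\sri(\dom g^*-\ran T^*)$ required by Theorem~\ref{teo:admmgeneralized} is automatic: indeed $\dom g^*-\ran T^*=\G$ and $0\in\G=\sri\G$ (and likewise $\ran T^*\cap\dom g^*\ne\varnothing$, so Problem~\ref{prob:optipd} is well posed). The remaining hypotheses of Theorem~\ref{teo:admmgeneralized}, namely monotonicity of $\Sigma^{-1}-K^*\varUpsilon K$ and existence of a solution to \eqref{eq:z_opt1}, are precisely those assumed in the statement of the corollary.

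Next I would check line by line that \eqref{eq:admmgeneralized_particular} is \eqref{eq:admmgeneralized1} with $T=\id$. The $y_n$ and $x_{n+1}$ updates coincide verbatim. In the $p_{n+1}$ update, setting $T=\id$ turns the minimization in \eqref{eq:admmgeneralized1} into $\argm{p\in\K}\big(g(p)+\frac{1}{2}\|p-(p_n-\Sigma K^*y_n)\|_{\Sigma^{-1}}^2\big)$, which by \eqref{e:prox} equals $\prox^{\Sigma^{-1}}_g(p_n-\Sigma K^*y_n)$; in particular this step becomes single valued, so the existence/uniqueness caveat preceding Theorem~\ref{teo:admmgeneralized} is resolved automatically. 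The $q_{n+1}$ update reduces directly to $\prox^{\varUpsilon}_f(\varUpsilon^{-1}x_n+Kp_{n+1})$. Hence \eqref{eq:admmgeneralized_particular} generates exactly the sequences produced by Algorithm~\ref{algo:43} applied to \eqref{eq:dualminproblem} with the data in \eqref{e:SADMMdefsops}, namely $A=\partial f^*$, $B=\partial(g^*\circ(-\id))$, and $L=K^*$.

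I would then apply Theorem~\ref{teo:admmgeneralized}\eqref{teo:admmgeneralizediii}\eqref{teo:admmgeneralizediiia}: there exists $(\hat{y},\hat{x},\hat{u})\in S_P\times S_D\times S_{P^*}$ with $(x_n,-Tp_n,q_n)\weak(\hat{x},\hat{u},-K\hat{u})$ and $\hat{u}=-T\hat{y}$. Specializing $T=\id$ gives $-p_n\weakly\hat{u}=-\hat{y}$, that is $p_n\weakly\hat{y}$, together with $x_n\weakly\hat{x}$; combining these yields $(p_n,x_n)\weak(\hat{y},\hat{x})\in S_P\times S_D$, which is the assertion.

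There is essentially no analytic obstacle: the entire argument is bookkeeping. The only points that need a moment's care are \textbf{(i)} verifying that the two qualification conditions appearing in Theorem~\ref{teo:admmgeneralized} degenerate to triviality precisely because $\ran T^*=\G$ when $T=\id$, and \textbf{(ii)} confirming that the monotonicity hypothesis stated here, $\Sigma^{-1}-K^*\varUpsilon K$ monotone, is verbatim the one driving Algorithm~\ref{algo:43} with $L=K^*$. Everything else follows by translating the $\argmin$ formulation into the $\prox$ notation and reading off the specialization $T=\id$ in Theorem~\ref{teo:admmgeneralized}.
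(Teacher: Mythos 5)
Your proposal is correct and follows exactly the route the paper intends: the paper presents this corollary as "a direct consequence of Theorem~\ref{teo:admmgeneralized} when $T=\id$", and your write-up simply supplies the bookkeeping — the qualification conditions trivialize because $\ran T^*=\K$, the $\argmin$ step collapses to $\prox^{\Sigma^{-1}}_g$, and Theorem~\ref{teo:admmgeneralized}\eqref{teo:admmgeneralizediii} (either part~\eqref{teo:admmgeneralizediiia} with $T=\id$ or part~\eqref{teo:admmgeneralizediiib} via $\ran T^*=\K$) yields $(p_n,x_n)\weakly(\hat{y},\hat{x})$. No gaps.
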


\begin{rem}
	\label{rem:corsplit}
	\begin{enumerate}
		\item Note that the explicit method proposed in 
		Corollary~\ref{coro:Splitadmm} includes two multiplier updates as
		the algorithm in \cite[Algorithm~I]{ChenTeb94}. Our method allows for 
		different step-sizes in primal and dual updates and the main 
		distinction 
		is that the third step in \eqref{eq:admmgeneralized_particular}
		includes the information of its second step, while the algorithm in 
		\cite[Algorithm~I]{ChenTeb94} uses the information of previous 
		iteration.
		
		\item Note that \eqref{eq:admmgeneralized_particular} and 
		\eqref{e:Moreau_nonsme}
		yield, for every $n\in\N$,
		\begin{align}
			x_{n+1}&=x_n+\varUpsilon Kp_{n+1}-\varUpsilon 
			q_{n+1}\nonumber\\
			&=\varUpsilon(\id-\prox^{\varUpsilon}_f)
			(\varUpsilon^{-1} x_n+ 
			Kp_{n+1})\nonumber\\
			&=\prox^{\varUpsilon^{-1}}_{f^*}(x_n+\varUpsilon Kp_{n+1})
		\end{align}
		and $y_{n+1}=x_{n+1}+\varUpsilon 
		(Kp_{n+1}-q_{n+1})=2x_{n+1}-x_n$. Therefore, when 
		$\|\varUpsilon^{\frac{1}{2}}\circ K^*\circ \Sigma^{\frac{1}{2}}\| < 1$,
		\eqref{eq:admmgeneralized_particular} reduces to the algorithm 
		proposed in \cite{pockcham} applied to the dual problem $\min 
		(f^*+g^*\circ -K^*)(\H)$. Hence, 
		Corollary~\ref{coro:Splitadmm} is a generalization of 
		\cite[Theorem~1]{pockcham} in this context. 
		
		\item
		\label{rem:corsplit2}
		Observe that the second step in 
		\eqref{eq:admmgeneralized_particular} is explicit, differing from 
		the first step in ADMM \eqref{eq:algadmm}, which is implicit. 
		This feature allows for an algorithm with very low computational 
		cost by iteration. However, 
		the number of iterations may be much larger than those of ADMM
		in some instances, as we verify numerically in 
		Section~\ref{sec:toy}.
	\end{enumerate}
	
\end{rem}

\section{Numerical experiments}
\label{sec:numer}
In this section we provide two numerical experiments. In the first 
experiment we compare SDR with several schemes in the 
literature for 
solving the total variation image restoration problem. In the second 
experiment we consider an 
academic example in which splitting $K$ from $T$
has numerical advantages with respect to ADMM.

\subsection{Total variation image restoration}
\label{sec:TV}
A classical model in image processing is the total variation 
image restoration \cite{ROF}, which aims at 
recovering an image from a blurred and noisy observation under
piecewise constant assumption on the solution.
The model is formulated via the optimization problem
\begin{equation}\label{pro:TV}
	\min_{x \in [0,255]^{N}} 
	\frac{1}{2}\|Rx-b\|^2_{2}+\alpha\|\nabla x\|_1=:F^{TV}(x),
\end{equation} 
where $x\in [0,255]^{N}$ is the image of $N=N_1 \times N_2$ pixels 
to recover from a 
blurred and noisy observation $b\in \R^{m}$, $R : 
\R^{N} \rightarrow \R^{m}$ is a linear operator representing a 
Gaussian blur, the 
discrete gradient
$\nabla\colon x\mapsto \nabla x=(D_1x,D_2x)$ includes horizontal 
and 
vertical differences through linear operators $D_1$ and $D_2$, 
respectively, 
its adjoint $\nabla^*$ is the discrete divergence
(see, e.g., \cite{TV-chambolle}), and $\alpha \in \RPP$. 
A difficulty in this model is 
the presence of the non-smooth $\ell^1$ 
norm composed with the discrete 
gradient operator $\nabla$, which is non-differentiable and 
its proximity operator has not a closed form. 

Note that, by setting $f=\|R\cdot-b\|^2/2$, $g_1=\alpha\|\cdot\|_1$, 
and $g_2=\iota_{[0,255]^N}$, $L_1=\nabla$, 
and $L_2=\id$,
\eqref{pro:TV} can be reformulated as $\min (f+g_1\circ L_1+g_2\circ 
L_2)$ or equivalently as (qualification condition holds)
\begin{equation}
	\text{find}\:\hat{x}\in \R^N\:\:\text{such that}\:\:0\in\partial 
	f(\hat{x})+L_1^*\partial g_1(L_1 x)+L_2^*\partial g_2(L_2 
	\hat{x}),
\end{equation}
which is a particular instance of \eqref{e:vured}, in view of 
\cite[Theorem~20.25]{1}.
Moreover, for every $\T>0$, $J_{\T \partial f}=(\id+\T 
R^*R)^{-1}(\id-\T R^*b)$, for 
every $i\in\{1,2\}$,
$J_{\T (\partial g_i)^{-1}}=\T(\id-\prox_{g_i/\T})(\id/\T)$, 
$\prox_{g_2/\T}=P_{[0,255]^N}$, and 
$\prox_{g_1/\T}=\prox_{\alpha\|\cdot\|_1/\T}$ is the component-wise 
soft thresholder, computed in 
\cite[Example~24.34]{1}. Note that $(\id+\T R^*R)^{-1}$
can be computed efficiently via a diagonalization of 
$R$ using the fast Fourier transform $F$ 
\cite[Section 4.3]{deblur}. 
Altogether, Remark~\ref{rem:SDR}\eqref{rem:SDRvu} allows us to 
write Algorithm~\ref{alg:SDR} as Algorithm~\ref{alg:TV} below, 
where we set $\varUpsilon=\tau\id$, $\Sigma_1=\sigma_1\id$, and 
$\Sigma_2=\sigma_2\id$, for $\T>0$, $\s_1>0$,
and $\s_2>0$. We denote by $\mathcal{R}$  the primal-dual 
error
\begin{equation}
	\label{e:defR}
	\mathcal{R} : (x_+,u_+,x,u) \mapsto 
	\sqrt{\frac{\|(x_+,u_+)-(x,u)\|^2}{\|(x,u)\|^2}}
\end{equation} 
and by $\varepsilon>0$ the convergence tolerance.
The error $\mathcal{R}$ is inspired from \eqref{e:summable} in 
the proof of Theorem~\ref{teo:GDR}.
\begin{algorithm}[H]
	\caption{} 
	\label{alg:TV}
	\begin{algorithmic}[1]
		\STATE{Fix $x_0 \in \R^{N}, v_{1,0}  \in \R^{m}, v_{2,0}  \in 
			\R^{2N} $, 
			$\T\s_1\|\nabla\|^2+\T\s_2\le 
			1$, and $r_0 > \varepsilon>0$. }
		\WHILE{ $r_n > \varepsilon $}
		\STATE{$x_{n+1}= (\id+\T R^*R)^{-1}(x_n-\T \nabla^*v_{1,n} - \T 
			v_{2,n}-\T 
			R^*b)$}
		\STATE{$v_{1,n+1}= 
			\s_1(\id-\prox_{\alpha\|\cdot\|_1/\s_1})(v_{1,n}/\s_1+\nabla(2x_{n+1}-x_n))$}
		\STATE{$v_{2,n+1}=\s_2\big( \id-  P_{[0,255]^N}\big)\left( 
			v_{2,n}/\s_2+ 2x_{n+1}-x_n\right)$}
		\STATE{ 
			$r_n=\mathcal{R}\big((x_{n+1},v_{1,n+1},v_{2,n+1}),
			(x_n,v_{1,n},v_{2,n})\big)$ }
		\ENDWHILE 
		\RETURN{$(x_{n+1},v_{1,n+1},v_{2,n+1})$}
	\end{algorithmic}
\end{algorithm}
In this case, \eqref{e:condvu} reduces to the monotonicity of 
$(\tau^{-1}-\s_2)\id-\s_1 
\nabla^*\nabla$, which is equivalent to 
\begin{equation}
	\label{e:boundary}
	\T\s_1\|\nabla\|^2+\T\s_2\le 1,
\end{equation}
in view of Proposition~\ref{p:linearop}. By using the power iteration 
\cite{vonMises} with tolerance 
$10^{-9}$, we obtain 
$\|\nabla\|^2 \approx 7.9997$. This is consistent with 
\cite[Theorem~3.1]{ChamL}.

Observe that, when $\s_1=\s_2=\s$, Algorithm~\ref{alg:TV} reduces 
to the algorithm proposed in \cite{cp} (when $\s\T(\|\nabla\|^2+1)<1$)
or \cite[Theorem~3.3]{condat}, where the case 
$\s\T(\|\nabla\|^2+1)=1$ is included. 

We provide two main numerical experiments in this subsection: we 
first compare the efficiency of 
Algorithm~\ref{alg:TV} when the step-sizes achieve the boundary in 
\eqref{e:boundary}, verifying that the efficiency is better when the 
equality is achieved. Next, we compare the performance of different 
methods in the literature with optimal step-sizes. For these 
comparisons, we consider the test image of $256\times 256$ pixels 
($N_1=N_2=256$) in 
Figure~\ref{fig:imreal2}\footnote{Image 
	\textit{Circles} obtained 
	from \url{http://links.uwaterloo.ca/Repository.html}} (denoted by 
$\overline{x}$). The 
operator blur $R$ is set as a Gaussian blur of size $9\times 9$ and 
standard 
deviation 4 (applied by MATLAB function 
\textit{fspecial}) and the observation $b$ is obtained by
$b=R\overline{x}+e\in \R^{m_1\times m_2}$, where $m_1=m_2=256$ 
and $e$ is an 
additive zero-mean white Gaussian noise with standard deviation 
$10^{-3}$ (using \textit{imnoise} function in MATLAB). We generate 
20 random realization of random variable $e$ leading to 20
observations $(b_i)_{1\le i\le 20}$. 

In Table~\ref{T:TV_interior_al_borde} we study 
the efficiency of Algorithm~\ref{alg:TV}, in the simpler case when 
$\s_1=\s_2=\s$, as 
parameters $\s$ and $\T$ approach the boundary 
$\s\T(\|\nabla\|^2+1)=1$. In particular, we set 
$\s=\T=\kappa/(10\sqrt{1+\|\nabla\|^2})$ for $\kappa \in \{6,7,8,9,10\}$. 
We provide the averages
of CPU time,  number of iterations, and percentage of error 
between objective values $F^{TV}(\overline{x})$ 
and $F^{TV}(x_n)$ 
obtained by applying Algorithm~\ref{alg:TV} for the 20 observations 
$(b_i)_{1\le i\le 20}$ and for $\kappa\in \{6,7,8,9,10\}$. The tolerance 
is set as $\varepsilon = 10^{-6}$.
We observe that the algorithm becomes more 
efficient (in time and iterations) and accurate (in terms of the 
objective value) as long as 
parameters approach the boundary. This conclusion is confirmed in
Figure~\ref{f:TV_interior_al_borde}, which shows the performance 
obtained 
with the observation $b_{13}$. 
Henceforth, we consider only parameters in the boundary of 
\eqref{e:boundary}.
\begin{table}
	{\footnotesize	 \caption{Averages of
			CPU time, number of iterations, and percentage of error in the 
			objective 
			value obtained from 
			Algorithm~\ref{alg:TV}  with 
			$\T=\s_1=\s_2=\kappa/(10\sqrt{1+\|\nabla\|^2})$ and 
			tolerance $10^{-6}$.}\label{T:TV_interior_al_borde}
		\begin{center}
			\begin{tabular}{|c|c|c|c|}\cline{2-4}
				\multicolumn{1}{c}{} &  \multicolumn{3}{|c|}{$\varepsilon=10^{-6}$} 
				\\ \hline
				\multicolumn{1}{|c|}{$\kappa$}     & Av. Time(s) & Av. Iter. & Av.\% 
				error o.v. \\ \hline
				6     & 43.22 & 8729 & 0.3541 \\\hline
				7     & 40.23 & 8179 & 0.3536 \\\hline
				8     & 38.56 & 7725 & 0.3533 \\\hline
				9     & 36.43 & 7340 & 0.3530 \\\hline
				10    & 34.66 & 7003 & 0.3528 \\
				\hline
			\end{tabular}
	\end{center} }
\end{table}

\begin{figure}
	\centering
	\subfloat[]{\includegraphics[scale=0.2]{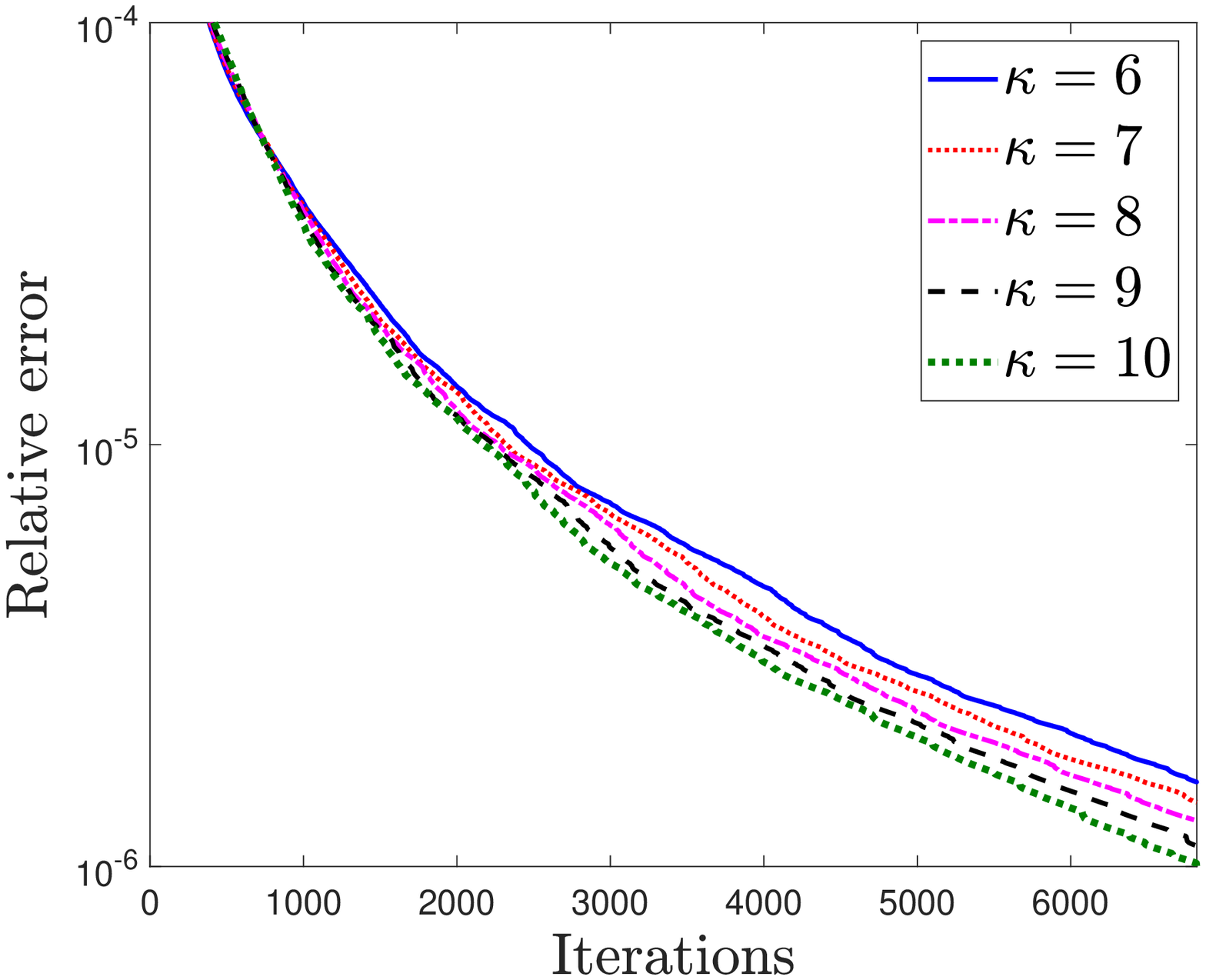}\label{f:graficos_iamges1a}}
	\subfloat[]{\includegraphics[scale=0.2]{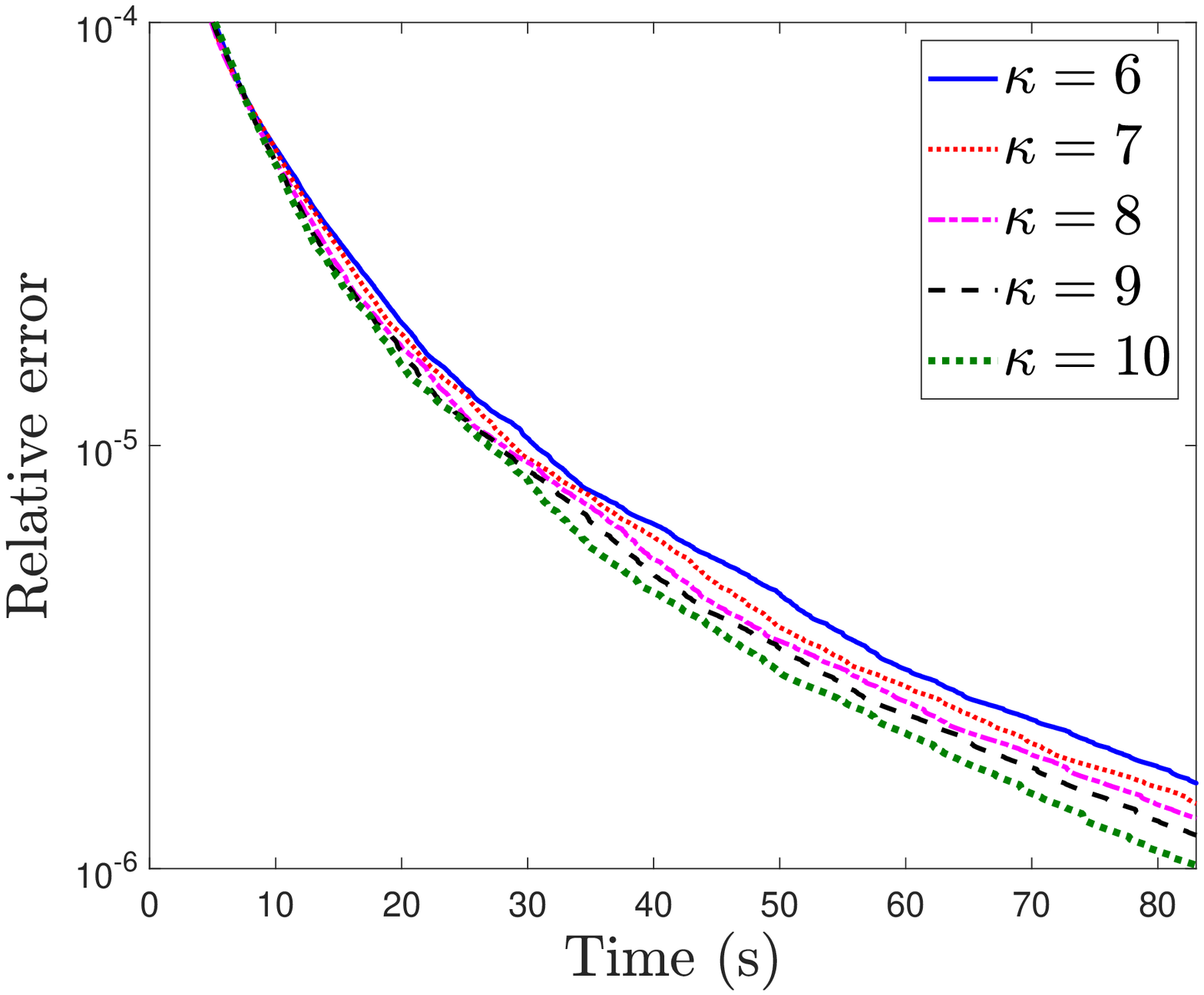}\label{f:graficos_iamges1b}}
	\caption{{Comparison of Algorithm~\ref{alg:TV} with 
			$\T=\s_1=\s_2=\kappa/(10\sqrt{1+\|\nabla\|^2}))$, for 
			image reconstruction from observation $b_{13}$. }}
	\label{f:TV_interior_al_borde}
\end{figure}

Next, we compare 
Algorithm~\ref{alg:TV} when 
$\T\s_1\|\nabla\|^2+\T\s_2= 1$,
with alternative algorithms 
in \cite[Theorem~3.3]{condat},  \cite[Theorem~3.1]{condat} or 
\cite[Corollary~4.2]{vu},
\cite[Theorem~3.1]{skew}, and 
\cite{Molinari2019}, which are called ``Condat'', ``Condat-V\~u'', ``MS'', 
and ``AFBS'', respectively. In order to provide a fair comparison in our 
example, we 
approximate the best step-sizes by considering a mesh on the 
feasible set defined by the conditions allowing convergence for each 
algorithm. In the case when $\varepsilon=10^{-6}$, 
the best performance of Condat-V\~u is obtained by setting 
$\tau=1.2$ and $\s=0.99\cdot (2-\T)/(2\T \|\nabla\|^2)$ which is 
next to the boundary 
of condition $\s\T\|\nabla\|^2<(1-\T/2)$. For MS, the
performance is better when the only step-size $\T$ is next to the 
boundary of the condition $\T<1/\sqrt{1+\|\nabla\|^2}$, which leads us 
to set  
$\T=0.99/\sqrt{1+\|\nabla\|^2}$. For AFBS, we found as best 
parameters 
$\T=0.13$ and $\lambda_n\equiv 
1.7/(65n+10)^{0.505}$ (see \cite{Molinari2019}). In the case of 
Condat, we consider 34 cases of parameters 
$\T$ and $\s$ satisfying $ \s\T(1+\|\nabla\|^2)=1$, by setting
$\T_k=\delta^{k}/(800\sqrt{1+\|\nabla\|^2})$ and  
$\s_k=800/(\delta^{k}\sqrt{1+\|\nabla\|^2})$, where $\delta=800^{1/8}$ 
and $k\in\{1,\ldots,34\}$. For Algorithm~\ref{alg:TV} we consider the 
same parameters $(\T_k)_{1\le k\le 34}$ than those in Condat, and 
we set $\s_{1,k}^{\ell}=(1-\ell)/(\T_k \|\nabla\|^2)$ and 
$\s_{2,k}^{\ell}=\ell/\T_k$, for $\ell \in 10^{-1}\cdot\{5,  0.1,  0.05,  
0.01,  
0.005,  0.003\}$, 
in view of \eqref{e:boundary}.
In Table~\ref{T:tol6} we provide the averages
of CPU time, number of iterations, and the 
percentage of error between objective values $F^{TV}(\overline{x})$ 
and $F^{TV}(x_n)$ obtained by previous algorithms with tolerance 
$\varepsilon=10^{-6}$ considering the observations 
$(b_i)_{1\le i\le 20}$.  We show the best 5 cases for 
Algorithm~\ref{alg:TV} ($k \in 
\{20,\ldots, 24\}$) and the best case for Condat ($k=22$). 
We observe that Algorithm~\ref{alg:TV} and Condat
reduce drastically the computational time and iterations obtained in 
Table~\ref{T:TV_interior_al_borde}, which shows the advantage 
of searching optimal parameters in the boundary of the condition of 
convergence.
We also observe in 
Table~\ref{T:tol6} that Algorithm~\ref{alg:TV} ($k = 22$ and 
$\ell=0.001$) is the 
most efficient method for this tolerance, followed closely by Condat 
($k=22$). Both methods outperform drastically the competitors.
In Figure~\ref{f:TV_comp1} we show the relative error 
versus iterations and time for the observation $b_{13}$, confirming 
previous results.
\begin{table}
	{\footnotesize	 \caption{
			Averages of CPU time, number of iterations, and percentage of error 
			in the 
			objective value for 
			Algorithm~\ref{alg:TV} with $\T\s_1\|\nabla\|^2+\T\s_2= 1$,
			Condat, Condat-V\~u, AFBS, and 
			MS with
			tolerance $10^{-6}$.}\label{T:tol6}
		\begin{center}
			\begin{tabular}{|c|c|c|c|c|c|}\cline{4-6}
				\multicolumn{3}{c}{}  & \multicolumn{3}{|c|}{$\varepsilon=10^{-6}$} 
				\\ \hline
				Algorithm & $\T$     &  $\s_1$ & Av. Time(s) & Av. Iter. & Av. \% error 
				o.v. \\\hline
				\multirow{5}{*}{Alg.\ref{alg:TV} } & 0.77 & 0.16 & 21.12 & 4106 & 
				0.3531 \\
				& 1.17 & 0.11 & 15.33 & 3223 & 0.3562 \\
				& 1.77 & 0.07 & 13.97 & 2787 & 0.3649 \\
				& 2.69 &0.05 & 14.36 & 2891 & 0.3771 \\
				& 4.09 & 0.03 & 16.23 & 3372 & 0.3907 \\\hline
				\multicolumn{1}{|c|}{Condat} &  1.77 & - & 14.89 & 2853 & 0.3673 \\ 
				\hline
				\multicolumn{1}{|c|}{Condat-V\~u} & 1.2 & - & 28.19 & 3539 &  
				0.3738 \\ 
				\hline
				\multicolumn{1}{|c|}{MS} & 0.33 &  - & 62.48 & 6193  & 0.3506 \\ \hline
				\multicolumn{1}{|c|}{AFBS} & 0.13 & - & 85.76 & 11104 & 0.6611\\ 
				\hline
			\end{tabular}
	\end{center} }
\end{table}

\begin{figure}
	\centering
	\subfloat[]{\includegraphics[scale=0.25]{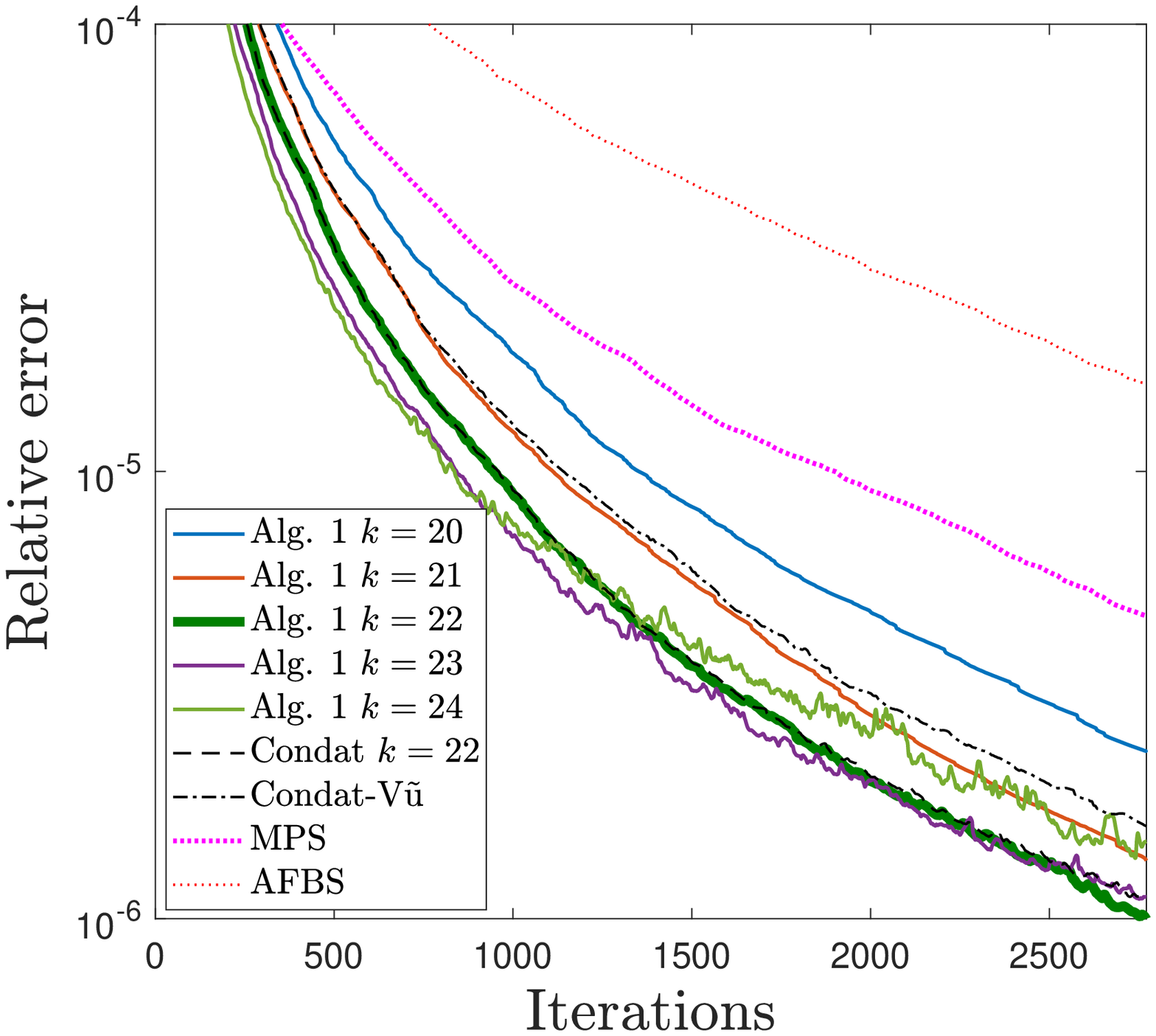}\label{f:comp1a}}
	\subfloat[]{\includegraphics[scale=0.25]{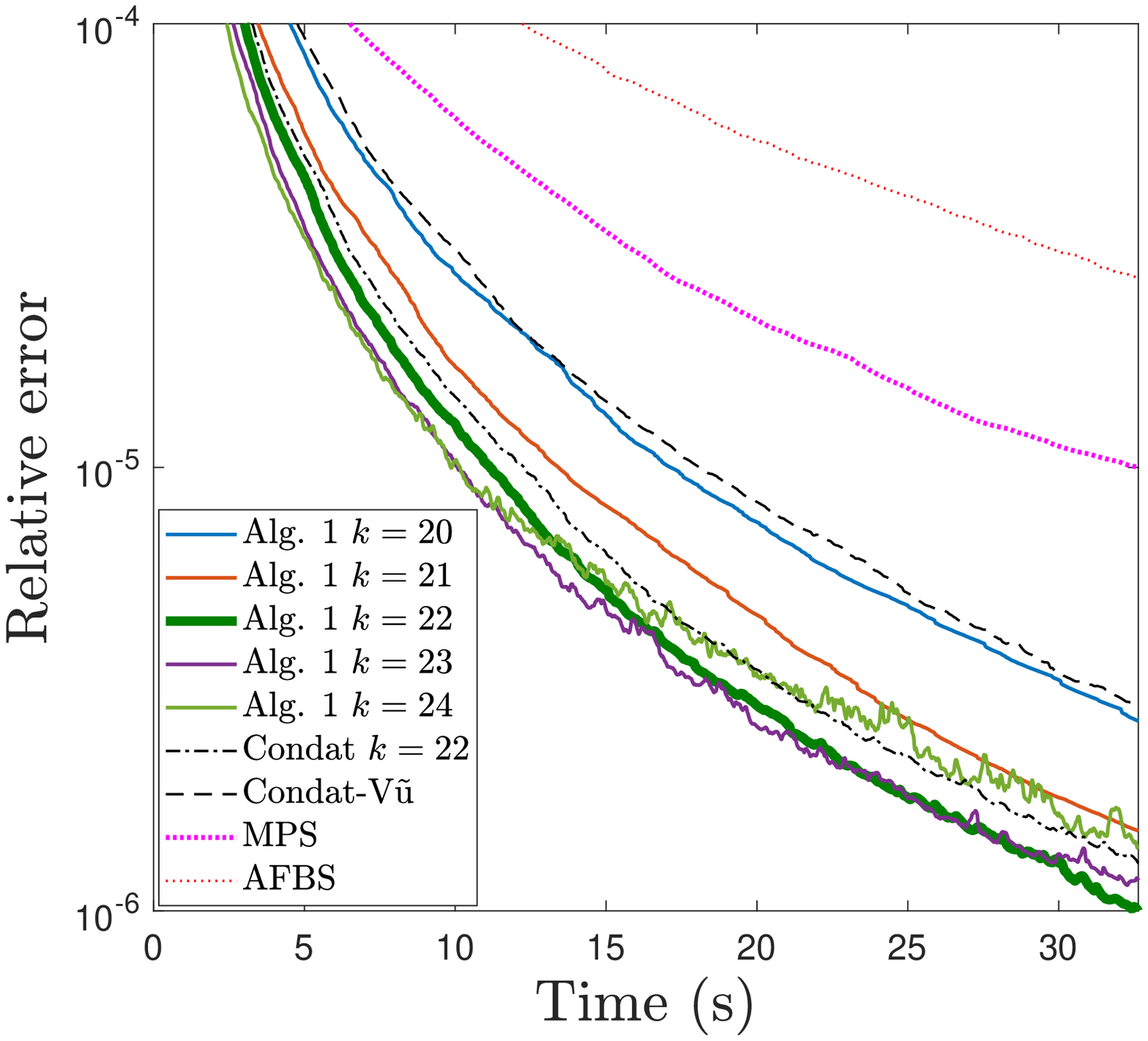}\label{f:comp1b}}
	\caption{{Comparison of Algorithm~\ref{alg:TV} with 
			$\T\s_1\|\nabla\|^2+\T\s_2= 1$, Condat, Condat-V\~u, AFBS, and 
			MS (observation $b_{13}$). }}
	\label{f:TV_comp1}
\end{figure}
In order to make a more precise comparison of Algorithm~\ref{alg:TV} 
and Condat, we consider a smaller tolerance $\varepsilon = 10^{-8}$. 
The obtained results are shown in Table~\ref{T:tol8} and 
Figure~\ref{f:TV_comp2}.
We observe that Algorithm~\ref{alg:TV} ($k = 
21$ and 
$\ell=0.001$) achieves the tolerance in approximately $11\%$ less 
CPU time 
than Condat in its best case ($k=21$). The efficiency in the case of 
the observation $b_{13}$ is illustrated in
Figure~\ref{f:TV_comp2}. 

\begin{table}
	{\footnotesize	 \caption{Averages of CPU time, number of iterations, 
			and percentage of error 
			in the 
			objective value for 
			Algorithm~\ref{alg:TV} with $\T\s_1\|\nabla\|^2+\T\s_2= 1$ and
			Condat with tolerance $10^{-8}$.}\label{T:tol8}
		\begin{center}
			\begin{tabular}{|c|c|c|c|c|c|}\cline{4-6}
				\multicolumn{3}{c}{}  & \multicolumn{3}{|c|}{$\varepsilon=10^{-8}$} 
				\\ \hline
				\multicolumn{1}{|c|}{Algorithm}  & $\T$     &  $\s_1$ & Av. Time(s) & 
				Av. Iter. & Av. \% error o.v. \\\hline
				\multirow{5}{*}{Alg. \ref{alg:TV}} & 0.77 & 0.16 & 93.36 & 19560 & 
				0.3514 \\
				& 1.17 & 0.11 & 83.15 & 17561 & 0.3515 \\
				& 1.77 & 0.07 & 100.06 & 20796 & 0.3515 \\
				& 2.69 & 0.05 & 128.80 & 26801 & 0.3516 \\
				& 4.09 & 0.03 & 160.92 & 33709 & 0.3517 \\\hline 
				\multicolumn{1}{|c|}{Condat} & 1.17 & - & 93.77& 18451 & 0.3515 \\ 
				\hline
			\end{tabular}
	\end{center} }
\end{table}

\begin{figure}
	\centering
	\subfloat[]{\includegraphics[scale=0.25]{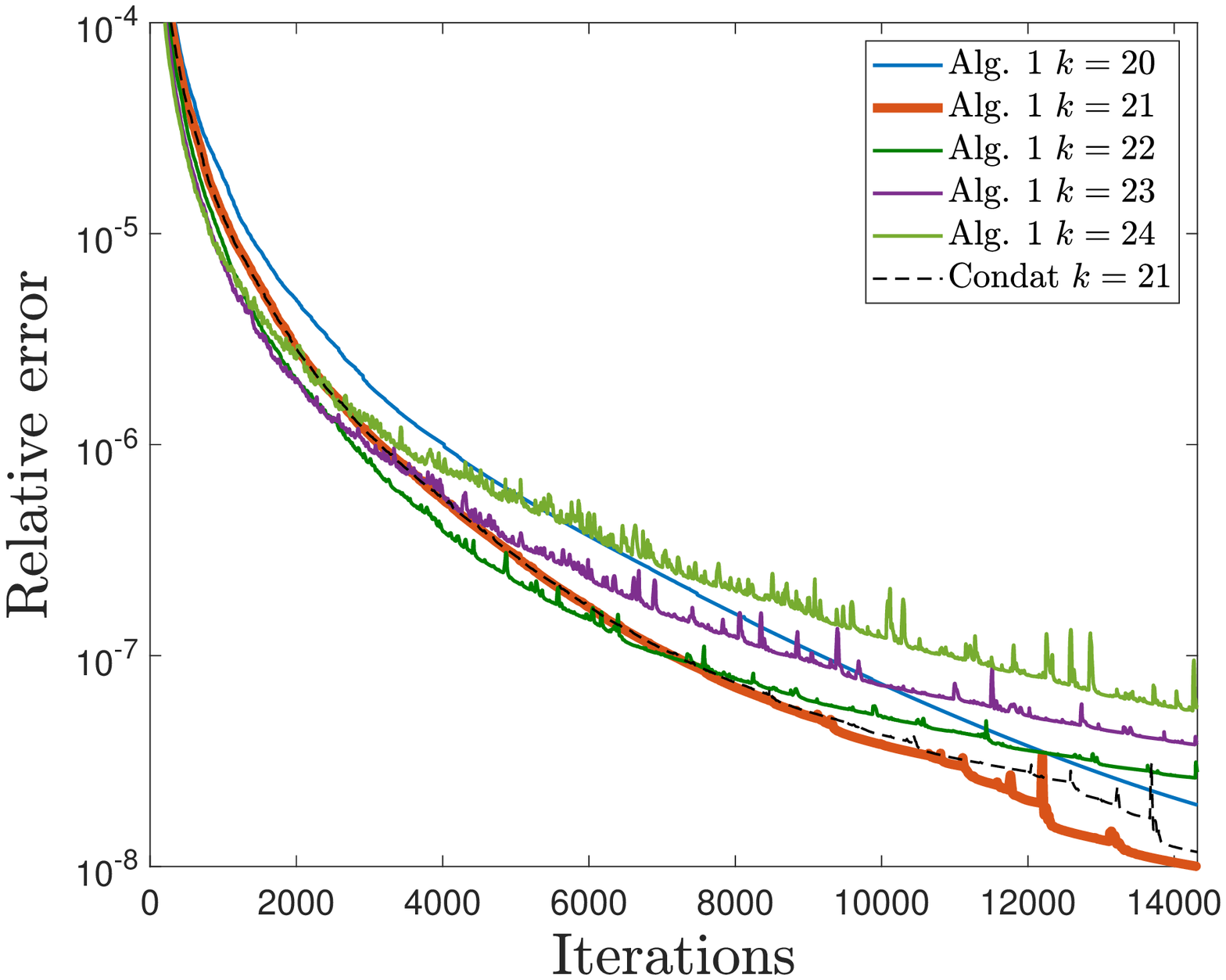}\label{f:comp2a}}
	\subfloat[]{\includegraphics[scale=0.25]{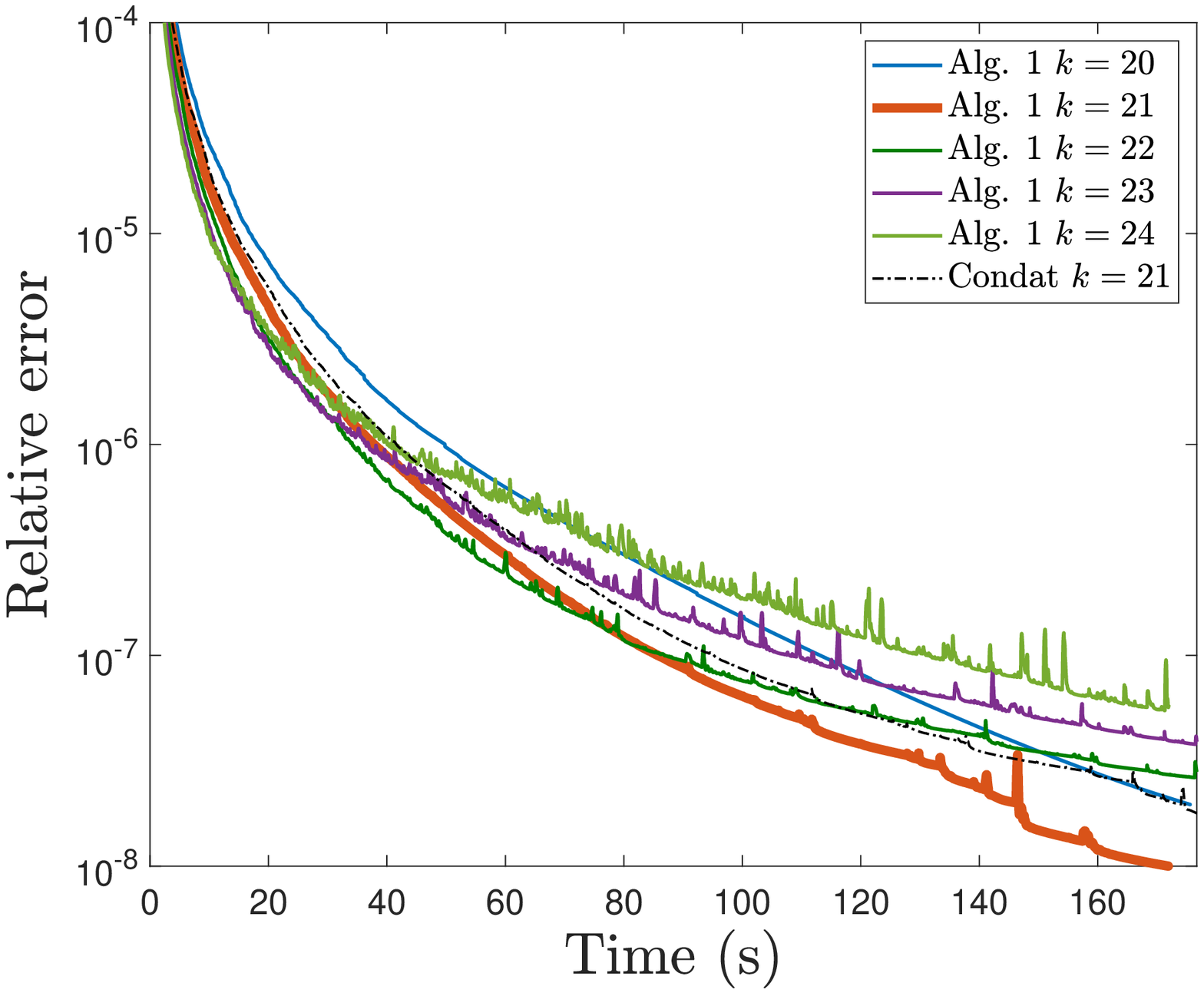}\label{f:comp2b}}
	\caption{{Comparison of Algorithm~\ref{alg:TV} with 
			$\T\s_1\|\nabla\|^2+\T\s_2= 1$ and Condat (observation $b_{13}$). }}
	\label{f:TV_comp2}
\end{figure}
The reconstructed images, after 100 iterations, for the different 
algorithms are shown in Figure~\eqref{f:imagenesTV}. The best 
reconstruction, in terms of objective value $F^{TV}$ and PSNR (Peak 
signal-to-noise ratio), are 
obtained by Condat and Algorithm~\ref{alg:TV}.  

\begin{figure}
	\centering
	\subfloat[\scriptsize Original, $F^{TV}(\overline{x})=10.32$ 
	]{\label{fig:imreal2}\includegraphics[scale=0.3]{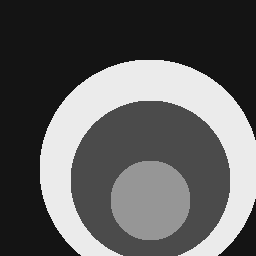}}\,
	\subfloat[\scriptsize Blurry/noisy $b_{13}$, $F^{TV}(b)=53.13$,
	PSNR=$22.15$]{\label{fig:imblur}\includegraphics[scale=0.3]{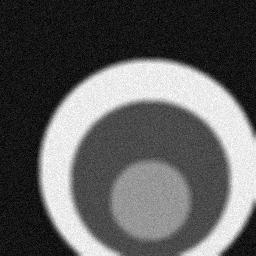}}\,
	\subfloat[\scriptsize AFBS,  
	$F^{TV}(x_{100})=11.28$, 
	PSNR=$25.53$.]{\label{fig:imAFBS}\includegraphics[scale=0.3]{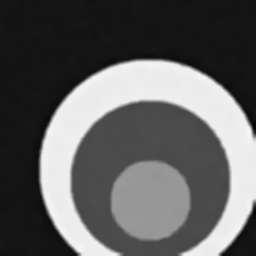}}\,
	\subfloat[\scriptsize MS, $F^{TV}(x_{100})=10.56$, PSNR=$26.38$. 
	]{\label{fig:imMS}\includegraphics[scale=0.3]{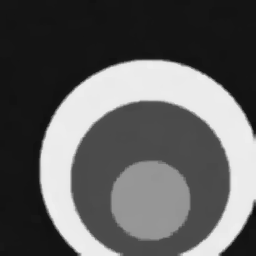}}\\
	\subfloat[\scriptsize Condat-V\~u, $F^{TV}(x_{100})=10.94$, 
	PSNR=$28.38$.]{\label{fig:imFISTATV}\includegraphics[scale=0.3]{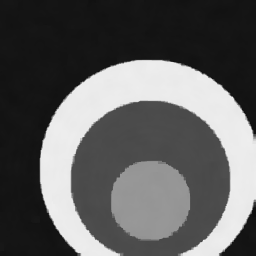}}\,
	\subfloat[\scriptsize Condat ($k=21$), $F^{TV}(x_{100})=10.57$,  
	PSNR=$28.80$.]{\label{fig:imcondat}\includegraphics[scale=0.3]{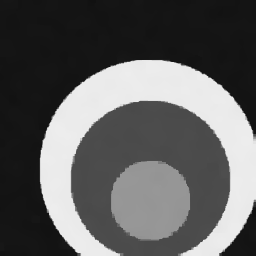}}\,
	\subfloat[\scriptsize Alg.~\ref{alg:TV} ($k=21$, $\ell=0.001$), 
	$F^{TV}(x_{100})=10.55$, 
	PSNR=$28.80$.]{\label{fig:imrealcase14}\includegraphics[scale=0.3]{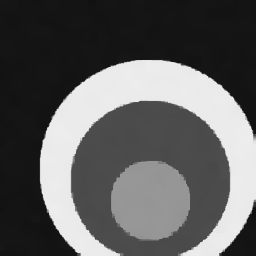}}
	\caption{Reconstructed image, after 100 iterations, from blurred and 
		noisy image using AFBS, MS, Condat-V\~u, Condat  and Alg. 
		\ref{alg:TV}.}
	\label{f:imagenesTV}
\end{figure}

\subsection{Split-ADMM in an academical example}
\label{sec:toy}
In this section, we implement Algorithm~\ref{algo:43}, 
Corollary~\ref{coro:Splitadmm}, and ADMM 
in \eqref{eq:algadmm} for solving an academical example in the 
context of Example~\ref{ex:sadmm}. We compare their 
performances when solving the
following optimization problem
\begin{equation} \label{prob:experiences}
	\min_{x \in \R^N} F(x)= h(x-z)+ \alpha \|Mx\|_1,
\end{equation}
where $h: \R^N \to \R$ is defined by
\begin{equation}\label{eq:huberfunction}
	h\colon x = (\xi_i)_{1\le i\le 
		n}\mapsto \sum_{i=1}^N \phi (\xi_i),\quad \phi : \R \to 
	\R\colon 
	\xi \mapsto \begin{cases}
		|\xi|-\dfrac{\delta}{2}, &\text{ if } |\xi|> \delta;\\
		\dfrac{\xi^2}{2\delta}, &\text{ if } |\xi|\leq \delta, 
	\end{cases}
\end{equation}
$\delta >0$,
$z \in \R^N$, $\alpha > 0$, and $M$  is a $N\times N$ 
symmetric positive definite real matrix. The first term in 
\eqref{prob:experiences}
is a data fidelity penalization using the Huber distance and 
the second 
term imposes sparsity in the solution. This type of 
problems appears 
naturally in image and signal denoising (see, e.g., 
\cite{Pustelnik2019,Liu2014,Pang2017,Sha2020}).

Since $M$ is symmetric, there exist $N\times N$ real 
matrices $P$
and $D$, such that $P^{\top}=P^{-1}$, $D$ is diagonal, and 
$M=PDP^{\top}$.
By setting $g = h(\cdot - z)$,  $f = \alpha \| \cdot \|_1$, 
$K=PD^{1-\eta}P^{\top}$, and $T=PD^{\eta}P^{\top}$, for some 
$\eta\in\left[0,1\right]$, we deduce that 
$KT=M$ and \eqref{prob:experiences} is a particular instance of
\eqref{eq:primalminproblem}. Next, we illustrate the 
efficiency of Algorithm~\ref{algo:43} for different values of 
$\eta\in\left[0,1\right]$.
Observe that, in the case when $\eta=0$ we have $T=\id$ and
Algorithm~\ref{algo:43} reduces to the algorithm in 
Corollary~\ref{coro:Splitadmm}. On the other hand, 
in the case when $\eta=1$ we have $K=\id$ and 
Algorithm~\ref{algo:43} reduces
to ADMM in \eqref{eq:algadmm}. We have
$\prox_{f}\colon (\xi_i)_{1\le i\le n}\mapsto \prox_{|\cdot|}(\xi_i)$, 
where $\prox_{|\cdot|}$ is the scalar soft-thresholder operator  
\cite[Example~24.34(iii)]{1}. 
Note that, since $\ker T=\{0\}$, for every $\eta\in\left[0,1\right]$,
the optimization problem in the second step of 
\eqref{eq:admmgeneralized1} admits a unique solution, in view of 
Remark~\ref{rem:ADMM}\eqref{rem:ADMMii}.
Therefore, when $\varUpsilon=\T\id$ and $\Sigma=\s\id$,
Algorithm~\ref{algo:43} in this example reads as follows.
\begin{algorithm}[H]
	\caption{ }
	\label{alg:SADMM_huber}
	\begin{algorithmic}[1]
		\STATE{Fix $\tau >0$, $p_0, q_0, x_0 \in \R^N$, $\varepsilon>0$, 
			and $r_0 
			> \varepsilon$.}
		\WHILE{ $r_n > \varepsilon $}
		\STATE {
			$y_{n} =\! x_n+\T(KT p_{n}-q_{n})$}
		\STATE\label{alg:SADMM_huber_step3}{$p_{n+1} =\! \zer 
			\big(\s\nabla h 
			(\cdot-z)+T^*\big(T \cdot - (Tp_n-\s K^* 
			y_{n})\big)\big)$} 
		\STATE{$q_{n+1}= \prox_{f/\T } (x_n/\T + KT p_{n+1} )$}
		\STATE{ $x_{n+1}=x_n+\T(KT p_{n+1}-q_{n+1})$ }
		\STATE{ $u_{n+1}= \s K^* (x_{n+1}-x_n)-T p_{n+1}$ }
		\STATE{ $r_{n+1} = \mathcal{R}(x_{n+1},u_{n+1},x_n,u_{n})$ }
		\ENDWHILE 
		\RETURN{$(p_{n+1},q_{n+1},x_{n+1})$}
	\end{algorithmic}
\end{algorithm}

Note that the step \ref{alg:SADMM_huber_step3} in
Algorithm~\ref{alg:SADMM_huber} 
involves 
the resolution of a non-linear equation when $\eta>0$.
On the other hand, in the case when
$\eta=0$, we have $T=\id$ and, as noticed
in Remark~\ref{rem:corsplit}\eqref{rem:corsplit2}, 
the step \ref{alg:SADMM_huber_step3} can be computed 
explicitly by using 
$\prox_{g}=z+\prox_{h}(\cdot-z)$ \cite[Proposition 23.17(iii)]{1}
and the fact that $\delta h$ 
is the real {\it Huber function} (see \cite[Example 
8.44\,\&\,Example 24.9]{1}).
We consider as stopping criterion the primal-dual relative error
defined in \eqref{e:defR}. 

We compare the performance of Algorithm~\ref{alg:SADMM_huber} 
when $\eta \in \{0, 0.8, 0.9, 1\}$
with the standard solver \textit{fmincon} of 
MATLAB for $N \in \{ 100, 250, 
500\}$ and different values of the 
minimum and maximum
eigenvalues $\lambda_{\max}\ge\lambda_{\min}>0$ 
of the matrix $M$.
Since the expected value of 
$\lambda_{\max}$ (resp. $\lambda_{\min}$) of random matrices
generated by a normal distribution increases (resp. decreases) as 
$N$ increases (see 
\cite[Table~1.2]{Edelman1988}), we consider three classes 
of matrices with condition number 
$\kappa=\lambda_{\max}/\lambda_{\min}=50$
for each dimension $N \in \{ 100, 250, 500\}$:
\begin{itemize}
	\item \textbf{Class A:} 
	Class of matrices $M$ with small eigenvalues 
	($\lambda_{\max}=N/1000$).
	
	\item \textbf{Class B:} Class of matrices $M$ with average 
	eigenvalues ($\lambda_{\max}=4N$). 
	
	\item \textbf{Class C:} Class of matrices $M$ with large 
	eigenvalues ($\lambda_{\max}=100N$). 
\end{itemize}
For each class, we generate 30 random matrices using the {\it randn} 
function of MATLAB and the eigenvalues of each randomly 
generated matrix $M$ is forced to satisfy the conditions of each class 
after a singular value decomposition $M=PDP^{\top}$. 
We next generate $T$ and $K$ as described before. 
Step~\ref{alg:SADMM_huber_step3} in 
Algorithm~\ref{alg:SADMM_huber} is computed via {\it fsolve} 
function of MATLAB (for $\eta >0$). We define the \textit{percentage 
	of improvement} of an algorithm with respect to \textit{fmincon} via
$I_{\bar{n}}=(\overline{F}-F(p_{\bar{n}}))\cdot
100/\overline{F},$
where $\overline{F}$ stands for the value of the function obtained 
by \textit{fmincon} with tolerance $10^{-14}$ and $F(p_{\bar{n}})$ is 
the value of the function
obtained by Algorithm~\ref{alg:SADMM_huber} when it stops in 
iteration $\bar{n}$.
Finally, we set the tolerance $\varepsilon = 10^{-6}$ 
and $\T =1$ in Algorithm~\ref{alg:SADMM_huber}. 

Table~\ref{T:comparacion_algoritmos_n=100} provides 
the averages of CPU time, iterations, and percentage of 
improvement with respect to 
\textit{fmincon} of Algorithm~\ref{alg:SADMM_huber} in the cases 
$\eta \in \{0, 0.8, 0.9, 1\}$ for the 30 random matrices in each 
class and $N\in\{100,250,500\}$. 
We split our 
analysis of the results in the three classes of random matrices.

The best performance in the class A (small eigenvalues) is
obtained by the case when $\eta=0$ 
(Corollary~\ref{coro:Splitadmm}) in 
each dimension. The function value is very close to the one 
obtained by 
\textit{fmincon} 
(difference of $10^{-5}$\%). For this class, the cases
when $\eta\in\{0.8,0.9\}$ are less accurate and ADMM ($\eta=1$)
is even more precise but much slower than the case when 
$\eta=0$ for 
this class. This is explained by a very low cost per iteration 
and a comparable average number of iterations of the 
case when $\eta=0$. 

On the other hand, for matrices belonging to the class B (average 
eigenvalues), the most efficient method is SADMM with 
$\eta=0.9$. The method needs very few number of iterations on 
average and it is more accurate than \textit{fmincon}, since 
$I_{\bar{n}}$ is positive. This feature is also verified in 
$\eta\in\{0.8,1\}$ but the number of iterations and computational time
is larger. We observe that the case when $\eta=0$ shows a very 
large 
number of iterations for achieving convergence and looses 
precision as the dimension increases. We conclude that SADMM 
outperforms drastically ADMM and the algorithm of 
Corollary~\ref{coro:Splitadmm}, for suitable factorizations of 
matrices $M$ with average eigenvalues. 

Finally, ADMM ($\eta=1$) is the best algorithm for the class C.
It needs a very few number of iterations on average for achieving 
convergence, which nicely scales with the dimension. The 
computational time is around $1/3$ of the closest competitor and the 
precision is as good as \textit{fmincon}. SADMM algorithms 
when $\eta\in\{0.8,0.9\}$ are similarly accurate but much slower. 
The case when $\eta=0$ is very far from the solution and 
extremely slow
for this class in all dimensions. 

\begin{table}[h!]
	{\footnotesize
		\caption{Performance of Algorithm~\ref{alg:SADMM_huber} 
			for
			$N\in \{100,250, 500\}$,
			$\eta\in\{0,0.8,0.9,1\}$ and classes A, B, and C.}  
		\label{T:comparacion_algoritmos_n=100}
		\begin{center}
			\begin{tabular}{|c|c|c|c|c|c|c|}\hline 
				$N$ & Class & $\eta$ & $0$ & $0.8$ &  
				$0.9$ & $1$ \\ \hline \hline
				\multirow{9}{*}{100}&
				\multirow{3}{*}{A} & Av. time & 0.019 & 4.86 & 
				4.92 & 4.37  \\ \cline{3-7}
				& & Av. iter & 688 & 704 & 717 & 656  \\ \cline{3-7}
				& & Av. $I_{\bar{n}}$ (\%) & -$1.8 \cdot 10^{-5}  $   & -0.47 & -0.07 & 
				-$1.5  \cdot 
				10^{-6}  $ \\ \cline{2-7} \cline{2-7}
				& \multirow{3}{*}{B} & Av. time  & 17.52 & 1.15 & 0.50 & 
				5.41  \\ \cline{3-7}
				& & Av. iter & 798258 & 118 & 49 & 519 \\ \cline{3-7}
				& & Av. $I_{\bar{n}}$ (\%) & 0.63& 0.36 & 0.33 & 0.64 \\ \cline{2-7} \cline{2-7}
				& \multirow{3}{*}{C} & Av. time & 31.44 & 3.77 & 1.07 & 
				0.34   \\ \cline{3-7}
				& & Av. iter & 1410638 & 395 & 107 & 30  \\ \cline{3-7}
				& & Av. $I_{\bar{n}}$ (\%) & -1607  & -$8.4  \cdot 10^{-8}  $ & -$8.1  
				\cdot 10^{-8}  
				$  & -$5.1  \cdot 10^{-8}  $   \\ \hline \hline 
				\multirow{9}{*}{250} & \multirow{3}{*}{A} & Av. time & 0.036 & 8.94 & 
				9.25 & 8.88 \\ \cline{3-7}
				& & Av. iter & 380 & 359 & 387 & 393  \\ \cline{3-7}
				& & Av. $I_{\bar{n}}$ (\%) & -$1.6\cdot 10^{-5}$ & -1.03 & -0.18 &  
				-$8\cdot 10^{-6}  $ \\ \cline{2-7} \cline{2-7}
				& \multirow{3}{*}{B} & Av. time & 136.82 & 5.54 & 2.61 &  
				32.15 \\ \cline{3-7}
				& & Av. iter & 1547593 & 143 & 64  & 886  \\ \cline{3-7}
				& & Av. $I_{\bar{n}}$ (\%) & -0.15 & 0.18 & 0.19 & 0.25 \\ \cline{2-7} \cline{2-7}
				& \multirow{3}{*}{C} & Av. time & 85.28 & 27.14 &5.83 &  
				1.76 \\ \cline{3-7}
				& & Av. iter & 971230  & 761 &  120 & 39 \\ \cline{3-7}
				& & Av. $I_{\bar{n}}$ (\%) & -18287  & -$ 1.3\cdot 10^{-7}$ &  -$9.5 
				\cdot 10^{-8}$  
				&  -$3.3\cdot 10^{-8}$ \\ \hline \hline
				\multirow{9}{*}{500}& \multirow{3}{*}{A} & Av. time & 0.067 & 13.41  &  
				13.58  &  13.52    \\ \cline{3-7}
				& & Av. iter & 123 & 128 & 129 & 132  \\ \cline{3-7}
				& & Av. $I_{\bar{n}}$ (\%) & $7.2 \cdot 10^{-5}$ & -1.47 & -0.30& $8.2 
				\cdot 
				10^{-5}$ \\ \cline{2-7} \cline{2-7}
				& \multirow{3}{*}{B} & Av. time & 581.25 & 39.99 & 23.95 
				& 113.24  \\ \cline{3-7}
				& & Av. iter & 1249041 & 248 & 162 & 740  \\ \cline{3-7}
				& & Av. $I_{\bar{n}}$ (\%) & -2.32  & 0.13 & 0.13 & 0.15 \\ \cline{2-7} \cline{2-7}
				& \multirow{3}{*}{C} & Av. time & 205.34 & 193.95 & 32.09 
				& 12.31  \\ \cline{3-7}
				& & Av. iter & 419896 & 1200 & 182 & 46  \\ \cline{3-7}
				& & Av. $I_{\bar{n}}$(\%) & -261808  & -$1.8\cdot 10^{-7}$ & -$1.5\cdot 
				10^{-7}$ 
				& -$9.4 \cdot 10^{-8}$ \\ \hline 
			\end{tabular}
	\end{center}}
\end{table}

\section*{Acknowledgments}
{\small The first author thanks the support of ANID under grant 
	FON\-DECYT 1190871 
	and grant Redes 180032. The second author thanks the support 
	of ANID-Subdirección de Capital  Humano/Doctorado 
	Nacional/2018-21181024 and of the Direcci\'on
	de Postgrado y Programas from UTFSM through Programa de 
	Incentivos a la Iniciaci\'on
	Cient\'ifica (PIIC).}


\begin{thebibliography}{10}
	
	\bibitem{russell2016}
	{\sc T.~Aspelmeier, C.~Charitha, and D.~R. Luke}, {\em Local linear convergence
		of the {ADMM}/{D}ouglas-{R}achford algorithms without strong convexity and
		application to statistical imaging}, SIAM J. Imaging Sci., 9 (2016),
	pp.~842--868.
	
	\bibitem{Aubin1990}
	{\sc J.-P. Aubin and H.~Frankowska}, {\em Set-valued analysis}, vol.~2 of
	Systems \& Control: Foundations \& Applications, Birkh\"{a}user Boston, Inc.,
	Boston, MA, 1990.
	
	\bibitem{1}
	{\sc H.~H. Bauschke and P.~L. Combettes}, {\em Convex Analysis and Monotone
		Operator Theory in {H}ilbert Spaces}, CMS Books in Mathematics/Ouvrages de
	Math\'{e}matiques de la SMC, Springer, Cham, second~ed., 2017.
	
	\bibitem{bot2018}
	{\sc R.~I. Bo\c{t} and E.~R. Csetnek}, {\em A{DMM} for monotone operators:
		convergence analysis and rates}, Adv. Comput. Math., 45 (2019), pp.~327--359.
	
	\bibitem{BOT2}
	{\sc R.~I. Bo\c{t}, E.~R. Csetnek, and A.~Heinrich}, {\em A primal-dual
		splitting algorithm for finding zeros of sums of maximal monotone operators},
	SIAM J. Optim., 23 (2013), pp.~2011--2036.
	
	\bibitem{bot2013}
	{\sc R.~I. Bo\c{t} and C.~Hendrich}, {\em A {D}ouglas-{R}achford type
		primal-dual method for solving inclusions with mixtures of composite and
		parallel-sum type monotone operators}, SIAM J. Optim., 23 (2013),
	pp.~2541--2565.
	
	\bibitem{Boyd2011}
	{\sc S.~Boyd, N.~Parikh, E.~Chu, B.~Peleato, and J.~Eckstein}, {\em Distributed
		optimization and statistical learning via the alternating direction method of
		multipliers}, Foundations and Trends in Machine Learning, 3 (2011),
	pp.~1--122.
	
	\bibitem{BrediesSun15}
	{\sc K.~Bredies and H.~Sun}, {\em Preconditioned {D}ouglas-{R}achford splitting
		methods for convex-concave saddle-point problems}, SIAM J. Numer. Anal., 53
	(2015), pp.~421--444.
	
	\bibitem{BrediesSun17}
	{\sc K.~Bredies and H.~Sun}, {\em A proximal point analysis of the
		preconditioned alternating direction method of multipliers}, J. Optim. Theory
	Appl., 173 (2017), pp.~878--907.
	
	\bibitem{BrediesSun15TV}
	{\sc K.~Bredies and H.~P. Sun}, {\em Preconditioned {D}ouglas-{R}achford
		algorithms for {TV}- and {TGV}-regularized variational imaging problems}, J.
	Math. Imaging Vision, 52 (2015), pp.~317--344.
	
	\bibitem{Nets1}
	{\sc L.~Brice\~{n}o, R.~Cominetti, C.~E. Cort\'{e}s, and F.~Mart\'{\i}nez},
	{\em An integrated behavioral model of land use and transport system: a
		hyper-network equilibrium approach}, Netw. Spat. Econ., 8 (2008),
	pp.~201--224.
	
	\bibitem{skew}
	{\sc L.~M. {Brice\~{n}o-Arias} and P.~L. Combettes}, {\em A monotone + skew
		splitting model for composite monotone inclusions in duality}, SIAM J.
	Optim., 21 (2011), pp.~1230--1250.
	
	\bibitem{Nash}
	{\sc L.~M. Brice\~{n}o Arias and P.~L. Combettes}, {\em Monotone operator
		methods for {N}ash equilibria in non-potential games}, in Computational and
	analytical mathematics, vol.~50 of Springer Proc. Math. Stat., Springer, New
	York, 2013, pp.~143--159.
	
	\bibitem{Siopt2}
	{\sc L.~M. {Brice\~{n}o-Arias} and D.~Davis}, {\em Forward-backward-half
		forward algorithm for solving monotone inclusions}, SIAM J. Optim., 28
	(2018), pp.~2839--2871.
	
	\bibitem{BAR19}
	{\sc L.~M. {Brice\~no-Arias} and F.~{Rold\'an}}, {\em Primal-dual splittings as
		fixed point iterations in the range of linear operators}, 2019,
	\url{https://arxiv.org/abs/1910.02329}.
	
	\bibitem{ChamL}
	{\sc A.~Chambolle}, {\em An algorithm for total variation minimization and
		applications}, J. Math. Imaging Vision, 20 (2004), pp.~89--97,
	\url{https://doi.org/10.1023/B:JMIV.0000011320.81911.38}.
	
	\bibitem{TV-chambolle}
	{\sc A.~Chambolle, V.~Caselles, D.~Cremers, M.~Novaga, and T.~Pock}, {\em An
		introduction to total variation for image analysis}, in Theoretical
	Foundations and Numerical Methods for Sparse Recovery, vol.~9 of Radon Ser.
	Comput. Appl. Math., Walter de Gruyter, Berlin, 2010, pp.~263--340.
	
	\bibitem{ChambLions97}
	{\sc A.~Chambolle and P.-L. Lions}, {\em Image recovery via total variation
		minimization and related problems}, Numer. Math., 76 (1997), pp.~167--188.
	
	\bibitem{cp}
	{\sc A.~Chambolle and T.~Pock}, {\em A first-order primal-dual algorithm for
		convex problems with applications to imaging}, J. Math. Imaging Vision, 40
	(2011), pp.~120--145.
	
	\bibitem{ChenTeb94}
	{\sc G.~Chen and M.~Teboulle}, {\em A proximal-based decomposition method for
		convex minimization problems}, Math. Programming, 64 (1994), pp.~81--101.
	
	\bibitem{Pustelnik2019}
	{\sc J.~Colas, N.~Pustelnik, C.~Oliver, P.~Abry, J.-C. G{\'e}minard, and
		V.~Vidal}, {\em {Nonlinear denoising for characterization of solid friction
			under low confinement pressure}}, {Physical Review E }, 42 (2019), p.~91.
	
	\bibitem{ipa}
	{\sc P.~L. Combettes}, {\em Quasi-{F}ej\'{e}rian analysis of some optimization
		algorithms}, in Inherently Parallel Algorithms in Feasibility and
	Optimization and their Applications ({H}aifa, 2000), vol.~8 of Stud. Comput.
	Math., North-Holland, Amsterdam, 2001, pp.~115--152.
	
	\bibitem{CombVu14}
	{\sc P.~L. Combettes and B.~C. V\~{u}}, {\em Variable metric forward-backward
		splitting with applications to monotone inclusions in duality}, Optimization,
	63 (2014), pp.~1289--1318.
	
	\bibitem{condat}
	{\sc L.~Condat}, {\em A primal-dual splitting method for convex optimization
		involving {L}ipschitzian, proximable and linear composite terms}, J. Optim.
	Theory Appl., 158 (2013), pp.~460--479.
	
	\bibitem{Vu15}
	{\sc D.~D\~{u}ng and B.~C. V\~{u}}, {\em A splitting algorithm for system of
		composite monotone inclusions}, Vietnam J. Math., 43 (2015), pp.~323--341.
	
	\bibitem{Daube04}
	{\sc I.~Daubechies, M.~Defrise, and C.~De~Mol}, {\em An iterative thresholding
		algorithm for linear inverse problems with a sparsity constraint}, Comm. Pure
	Appl. Math., 57 (2004), pp.~1413--1457.
	
	\bibitem{DR1956}
	{\sc J.~Douglas, Jr. and H.~H. Rachford, Jr.}, {\em On the numerical solution
		of heat conduction problems in two and three space variables}, Trans. Amer.
	Math. Soc., 82 (1956), pp.~421--439.
	
	\bibitem{ecksteinthesis}
	{\sc J.~Eckstein}, {\em Splitting Methods for Monotone Operators with
		Applications to Parallel Optimization}, PhD thesis, Massachusetts Institute
	of Technology, 1989.
	
	\bibitem{eckstein-bertsekas}
	{\sc J.~Eckstein and D.~P. Bertsekas}, {\em On the {D}ouglas-{R}achford
		splitting method and the proximal point algorithm for maximal monotone
		operators}, Math. Programming, 55 (1992), pp.~293--318.
	
	\bibitem{EckSvaiter}
	{\sc J.~Eckstein and B.~F. Svaiter}, {\em A family of projective splitting
		methods for the sum of two maximal monotone operators}, Math. Program., 111
	(2008), pp.~173--199.
	
	\bibitem{ecksteinadmm2015}
	{\sc J.~Eckstein and W.~Yao}, {\em Understanding the convergence of the
		alternating direction method of multipliers: theoretical and computational
		perspectives}, Pac. J. Optim., 11 (2015), pp.~619--644.
	
	\bibitem{Edelman1988}
	{\sc A.~Edelman}, {\em Eigenvalues and condition numbers of random matrices},
	SIAM J. Matrix Anal. Appl., 9 (1988), pp.~543--560.
	
	\bibitem{Fuku96}
	{\sc M.~Fukushima}, {\em The primal {D}ouglas-{R}achford splitting algorithm
		for a class of monotone mappings with application to the traffic equilibrium
		problem}, Math. Programming, 72 (1996), pp.~1--15.
	
	\bibitem{gabay83}
	{\sc D.~Gabay}, {\em Chapter {IX} applications of the method of multipliers to
		variational inequalities}, in Augmented Lagrangian Methods: Applications to
	the Numerical Solution of Boundary-Value Problems, M.~Fortin and
	R.~Glowinski, eds., vol.~15 of Studies in Mathematics and Its Applications,
	Elsevier, 1983, pp.~299 -- 331.
	
	\bibitem{gabaymercier}
	{\sc D.~Gabay and B.~Mercier}, {\em A dual algorithm for the solution of
		nonlinear variational problems via finite element approximation}, Computers
	\& Mathematics with Applications, 2 (1976), pp.~17--40.
	
	\bibitem{GafniBert84}
	{\sc E.~M. Gafni and D.~P. Bertsekas}, {\em Two-metric projection methods for
		constrained optimization}, SIAM J. Control Optim., 22 (1984), pp.~936--964.
	
	\bibitem{GlowinskyMorrocco75}
	{\sc R.~Glowinski and A.~Marrocco}, {\em Sur l'approximation, par
		\'{e}l\'{e}ments finis d'ordre un, et la r\'{e}solution, par
		p\'{e}nalisation-dualit\'{e}, d'une classe de probl\`emes de {D}irichlet non
		lin\'{e}aires}, Rev. Fran\c{c}aise Automat. Informat. Recherche
	Op\'{e}rationnelle S\'{e}r. Rouge Anal. Num\'{e}r., 9 (1975), pp.~41--76.
	
	\bibitem{Goldstein64}
	{\sc A.~A. Goldstein}, {\em Convex programming in {H}ilbert space}, Bull. Amer.
	Math. Soc., 70 (1964), pp.~709--710.
	
	\bibitem{deblur}
	{\sc P.~C. Hansen, J.~G. Nagy, and D.~P. O'Leary}, {\em Deblurring images:
		Matrices, spectra, and filtering}, vol.~3 of Fundamentals of Algorithms,
	Society for Industrial and Applied Mathematics (SIAM), Philadelphia, PA,
	2006.
	
	\bibitem{yuan}
	{\sc B.~He and X.~Yuan}, {\em Convergence analysis of primal-dual algorithms
		for a saddle-point problem: from contraction perspective}, SIAM J. Imaging
	Sci., 5 (2012), pp.~119--149.
	
	\bibitem{lions-mercier79}
	{\sc P.-L. Lions and B.~Mercier}, {\em Splitting algorithms for the sum of two
		nonlinear operators}, SIAM J. Numer. Anal., 16 (1979), pp.~964--979.
	
	\bibitem{Liu2014}
	{\sc X.~Liu, D.~Zhai, D.~Zhao, G.~Zhai, and W.~Gao}, {\em Progressive image
		denoising through hybrid graph {L}aplacian regularization: a unified
		framework}, IEEE Trans. Image Process., 23 (2014), pp.~1491--1503.
	
	\bibitem{vonMises}
	{\sc R.~V. Mises and H.~Pollaczek-Geiringer}, {\em Praktische verfahren der
		gleichungsauflösung}, Zeitschrift für Angewandte Mathematik und Mechanik, 9
	(1929), pp.~152--164.
	
	\bibitem{Molinari2019}
	{\sc C.~Molinari, J.~Peypouquet, and F.~Roldan}, {\em Alternating
		forward-backward splitting for linearly constrained optimization problems},
	Optim. Lett., 14 (2020), pp.~1071--1088.
	
	\bibitem{moursi2019}
	{\sc W.~M. Moursi and Y.~Zinchenko}, {\em A note on the equivalence of operator
		splitting methods}, in Splitting Algorithms, Modern Operator Theory, and
	Applications, Springer, Cham, 2019, pp.~331--349.
	
	\bibitem{Pang2017}
	{\sc J.~Pang and G.~Cheung}, {\em Graph {L}aplacian regularization for image
		denoising: analysis in the continuous domain}, IEEE Trans. Image Process., 26
	(2017), pp.~1770--1785.
	
	\bibitem{pockcham}
	{\sc T.~Pock and A.~Chambolle}, {\em Diagonal preconditioning for first order
		primal-dual algorithms in convex optimization}, in 2011 International
	Conference on Computer Vision, 2011, pp.~1762--1769.
	
	\bibitem{RieszNagy}
	{\sc F.~Riesz and B.~Sz.-Nagy}, {\em Functional analysis}, Frederick Ungar
	Publishing Co., New York, 1955.
	
	\bibitem{ROF}
	{\sc L.~I. Rudin, S.~Osher, and E.~Fatemi}, {\em Nonlinear total variation
		based noise removal algorithms}, Phys. D, 60 (1992), pp.~259--268.
	
	\bibitem{Sha2020}
	{\sc L.~Sha, D.~Schonfeld, and J.~Wang}, {\em Graph {L}aplacian regularization
		with sparse coding for image restoration and representation}, IEEE
	Transactions on Circuits and Systems for Video Technology, 30 (2020),
	pp.~2000--2014.
	
	\bibitem{shefi}
	{\sc R.~Shefi and M.~Teboulle}, {\em Rate of convergence analysis of
		decomposition methods based on the proximal method of multipliers for convex
		minimization}, SIAM J. Optim., 24 (2014), pp.~269--297.
	
	\bibitem{Showalter}
	{\sc R.~E. Showalter}, {\em Monotone Operators in {B}anach Space and Nonlinear
		Partial Differential Equations}, vol.~49 of Mathematical Surveys and
	Monographs, American Mathematical Society, Providence, RI, 1997.
	
	\bibitem{svaiter}
	{\sc B.~F. Svaiter}, {\em On weak convergence of the {D}ouglas-{R}achford
		method}, SIAM J. Control Optim., 49 (2011), pp.~280--287.
	
	\bibitem{patrinos2020}
	{\sc A.~Themelis and P.~Patrinos}, {\em Douglas-{R}achford splitting and {ADMM}
		for nonconvex optimization: tight convergence results}, SIAM J. Optim., 30
	(2020), pp.~149--181.
	
	\bibitem{vu}
	{\sc B.~C. V\~{u}}, {\em A splitting algorithm for dual monotone inclusions
		involving cocoercive operators}, Adv. Comput. Math., 38 (2013), pp.~667--681.
	
	\bibitem{yin2016}
	{\sc M.~Yan and W.~Yin}, {\em Self equivalence of the alternating direction
		method of multipliers}, in Splitting Methods in Communication, Imaging,
	Science, and Engineering, Sci. Comput., Springer, Cham, 2016, pp.~165--194.
	
	\bibitem{Yang21}
	{\sc Y.~Yang, Y.~Tang, M.~Wen, and T.~Zeng}, {\em Preconditioned
		{D}ouglas-{R}achford type primal-dual method for solving composite monotone
		inclusion problems with applications}, Inverse Problems \& Imaging, 15
	(2021), pp.~787--825.
	
	\bibitem{Osher11}
	{\sc X.~Zhang, M.~Burger, and S.~Osher}, {\em A unified primal-dual algorithm
		framework based on {B}regman iteration}, J. Sci. Comput., 46 (2011),
	pp.~20--46.
	
\end{thebibliography}
\end{document}